\newtheorem{THEalpha}{Theorem}
\newtheorem{PROalpha}[THEalpha]{Proposition} 
\newtheorem{CORalpha}[THEalpha]{Corollary}
\newtheorem{The}{Theorem}[section]
\newtheorem{Cor}{Corollary}[section]
\newtheorem{Lem}{Lemma}[section]
\newtheorem{Pro}{Proposition}[section]
\theoremstyle{definition}
\newtheorem{defn}{Definition}[section]
\theoremstyle{remark}
\newtheorem{Rem}{Remark}[section]
\newtheorem{ex}{Example}[section]
\numberwithin{equation}{section} 
\newcommand{\TT}{\mathbb{T}}
\newcommand{\RR}{\mathbb{R}}
\newcommand{\ZZ}{\mathbb{Z}}
\newcommand{\NN}{\mathbb{N}}
\newcommand{\FF}{\mathbf{F}}
\newcommand{\KK}{\mathbf{K}}
\newcommand{\cF}{\mathcal{F}}
\newcommand{\cK}{\mathcal{K}}
\newcommand{\cR}{\mathcal{L}}
\newcommand{\cG}{\mathcal{G}}
\newcommand{\cE}{\mathcal{E}}
\newcommand{\cU}{\mathcal{U}}
\newcommand{\cD}{\mathcal{I}}
\newcommand{\bff}{\mathbf{f}}
\newcommand{\bfk}{\mathbf{k}}
\newcommand{\bfh}{\mathbf{h}}
\newcommand{\rS}{\mathrm{S}}
\newcommand{\rR}{\mathrm{R}}
\newcommand{\rD}{\mathcal{I}}
\newcommand{\vep}{\varepsilon}
\newcommand{\DelU}{\Delta_{U_0}}
\newcommand{\Diff}{\textup{Diff}_0^\infty}
\def\lrn#1{\left\|#1\right\|}
\def\leq{\leqslant}
\def\geq{\geqslant}
\def\tilde{\widetilde}
\title[]{On simultaneous linearization of certain commuting nearly integrable diffeomorphisms of the cylinder}
\author{Qinbo Chen}
\address{Department of Mathematics, Kungliga Tekniska H\"{o}gskolan, Lindstedtsv\"{a}gen 25, SE-100 44  Stockholm, Sweden}
\email{qinbochen1990@gmail.com}
\author{Danijela Damjanovi\'{c}}
\address{Department of Mathematics, Kungliga Tekniska H\"{o}gskolan, Lindstedtsv\"{a}gen 25, SE-100 44  Stockholm, Sweden}
\email{ddam@kth.se}
\author{Boris Petkovi\'{c}}
\address{Department of Mathematics, Kungliga Tekniska H\"{o}gskolan, Lindstedtsv\"{a}gen 25, SE-100 44  Stockholm, Sweden}
\email{borisp@kth.se}
\subjclass[2010]{Primary 37C15, 37C85, 37Exx	}
\keywords{Local rigidity, abelian group actions, nearly integrable systems, twist maps}
\begin{document} 
\begin{abstract}
Let  $\mathcal{F}$ and $\mathcal{K}$ be commuting $C^\infty$ diffeomorphisms of the cylinder $\mathbb{T}\times\mathbb{R}$ that are, respectively, close to $\mathcal{F}_0 (x, y)=(x+\omega(y), y)$ and $T_\alpha  (x, y)=(x+\alpha, y)$, where $\omega(y)$ is non-degenerate and $\alpha$ is Diophantine.  
Using  the KAM iterative scheme for the group action we show that $\mathcal{F}$ and $\mathcal{K}$ are simultaneously $C^\infty$-linearizable
 if $\mathcal{F}$ has the intersection property (including the exact symplectic maps) and $\mathcal{K}$ satisfies a  semi-conjugacy condition. 
 We also provide examples showing necessity of these conditions.
 As a consequence, we get local rigidity of certain class of $\mathbb{Z}^2$-actions on the cylinder, generated by commuting twist maps.
\end{abstract}
\maketitle

\section{Introduction}
The goal of this paper is to study the simultaneous linearization problem for some commuting nearly integrable $C^\infty$ diffeomorphisms of the cylinder. The question of linearization has been one of the central themes in dynamical systems. 
We start by considering two types of typical integrable maps on the infinite cylinder $\TT\times\RR$,
whose perturbations will be discussed  below. Here, $\TT=\RR/\ZZ$ denotes the circle. 
Let $\cF_0:\TT\times\RR\to \TT\times\RR$ be a smooth integrable twist map of the form
\[\cF_0 (x, y)=(x+\omega(y), y),\]
where the frequency map $\omega(y)$ is non-degenerate, in the sense that  $\omega(y):\RR\to\RR$ has a smooth inverse map. 
A typical example is $\omega(y)=y$.  
For $\alpha\in\RR$, we denote by $T_\alpha: \TT\times\RR\to \TT\times\RR$   the  linear map as follows
\[T_\alpha  (x, y)=(x+\alpha, y).\]
Clearly,  the phase spaces of $\cF_0$ and $T_\alpha$ are completely foliated by smooth invariant circles,  on which the dynamics are conjugate to the rigid rotations.

We wish to study the  perturbations of $\cF_0$ and the perturbations of $T_\alpha$. They arise naturally in many physical and geometric problems. 
Consider a smooth  diffeomorphism (not necessarily symplectic) $\cF$ which is  a  perturbation of $\cF_0$ and homotopic to the identity. This means there is a perturbation $f=(f_1, f_2)$ with $f_1, f_2\in C^\infty(\TT\times\RR,\RR)$, such that 
\begin{align}\label{form_cF}
	\cF=\cF_0+f &~:~  \TT\times\RR\to \TT\times\RR \nonumber\\
	&\big(x,y\big)
	\longmapsto
\big(x+\omega(y) +f_1(x,y)\quad\textrm{mod}~1, \quad y+f_2(x,y)
	\big).
\end{align} 
In particular, for the case where $\cF$ is exact symplectic, 
 the question of persistence of invariant circles has been much studied.
The celebrated  KAM (Kolmogorov-Arnold-Moser) theorem asserts that  the  Diophantine invariant circles persist under small perturbations. 
Moreover, the question of when there do or do not exist invariant circles has led to deep studies by R\"ussmann, Herman, Mather, \textit{et al.} See \cite{Herman_1986} and the references therein. 

We also consider a perturbation $\cK$ of $T_\alpha$ that is homotopic to the identity. This means there is a perturbation $k=(k_1, k_2)$ with $k_1, k_2\in C^\infty(\TT\times\RR,\RR)$, such that 
\begin{align}\label{form_cK}
	\cK=T_\alpha+k &~:~  \TT\times\RR\to \TT\times\RR \nonumber\\
&\big(x,y\big)
	\longmapsto
\big(x+\alpha+k_1(x,y)\quad \textrm{mod}~1,\quad
		y+k_2(x,y)\big).
\end{align}
There are  many related problems and results under certain assumptions. We briefly review some of them.
Restricted to the bounded annulus $\TT\times[0,1]$,
 an important model is the irrational pseudo-rotation, i.e.,  an orientation and area preserving diffeomorphism of the annulus that  has no periodic points and its rotation number on a boundary circle  is $\alpha$. For any Liouville $\alpha\in\RR$, examples of weak mixing pseudo-rotations  were constructed by the Anosov-Katok method \cite{Anosov_Katok1970,Fay_Sap_2005}.
For Diophantine $\alpha$, it was observed by Herman (see also \cite{Fayad_Krikorian}) that the pseudo-rotation is always smoothly conjugated to $T_\alpha$ in a small neighborhood of the boundary circle. 
We also refer to  course note  \cite{Cro_06} and a recent work
\cite{Av_Fa_Le_Xu_Zh} for more background and overview of the properties of the pseudo-rotations. However, our paper does not focus on the pseudo-rotations. In fact, we do not presuppose  the existence of a $\cK$-invariant circle and the area-preserving condition  for the  map $\cK$. 

In this paper, we are  interested in the \textit{local rigidity} aspect of $\cF_0$ and $T_\alpha$, i.e.,
 the preservation of smooth foliations  under small perturbations. This is essentially a linearization problem.
In general, it is not possible to find a  smooth conjugacy to the linear model for a single element of the action generated by the pair $(\cF, \cK)$.
Indeed, for a single map, 
 it has been known since the work of  Poincar\'e that the smooth foliation structure is in general  destroyed by 
 an arbitrarily small perturbation.  
 Here, we are motivated by an attempt to  investigate the following question:
 
 \noindent\textbf{Question.}\textit{ For the smooth cylinder maps $\cF$ and $\cK$ that are, respectively, close to $\cF_0$ and $T_\alpha$,  assume that $\cF$ and $\cK$ commute (i.e., $\cF\circ\cK=\cK\circ\cF$), 
can  $\cF$ and $\cK$   be  simultaneously $C^\infty$-linearizable ?}

The present paper gives a positive answer in the case where $\alpha$ is Diophantine. 
 
 The linearization problem for commuting diffeomorphisms is 
 related to the rigidity theory of a higher rank $\ZZ^n$-action  where $n\geq 2$ is the number of diffeomorphisms which generate the action. The case of circle maps has been thoroughly stuided. In \cite{Moser90_commuting},
  the problem of linearizing commuting circle diffeomorphisms was raised by Moser  in connection with the holonomy group of certain foliations with codimension $1$. Using a perturbative KAM scheme, he proved that  for commuting $C^\infty$ circle diffeomorphisms $\phi_1,\cdots, \phi_n$, if  the rotation numbers satisfy a simultaneous Diophantine condition and $\phi_1,\cdots, \phi_n$ are  close to the rigid rotations, then they can be simultaneously $C^\infty$-conjugated to the rigid circle rotations.  Later,
the global version of  Moser's result was  proved by  Fayad and Khanin
  \cite{Fayad_Khanin} by using the global theory of Herman \cite{Herman_1979circle} and Yoccoz \cite{Yoccoz_1984}.
  In the higher dimensional case,  the local rigidity  for commuting diffeomorphisms (close to the torus translations) of  $\TT^d$ was obtained in \cite{Rodriguez_2005} for $d=2$ and in  \cite{Damjanovic_Fayad,Wilkinson_Xue, Boris_2021} for  $d\geq 2$,  by assuming an appropriate Diophantine condition on the rotation sets.
  
  Historically, the dynamical motivation  for investigating the rigidity of abelian group actions started with the study of structural stability of hyperbolic diffeomorphisms, see \cite{KS_1997} for a brief introduction. Unlike the rigidity of elliptic group actions which mainly uses  analytic methods, rigidity of the hyperbolic group actions uses more geometric techniques from the hyperbolic theory. For higher rank Anosov actions on compact manifolds,   the rigidity problem has been widely studied (cf. \cite{Hur_1992, Kal_Sa_2006,FKS_2013,RW_2014,DX_2020}, \textit{etc}).
For local rigidity of certain
  higher rank partially hyperbolic abelian actions, see \cite{Damjanovic_Katok10, DK_2011, Damjanovic_Fayad, Vin_Wang_2019} and  the references therein.   A complete local picture for affine actions by higher rank lattices in semisimple Lie groups was obtained in \cite{FM_2009}. For background and  overview of the local rigidity problem for general group actions, we refer to the survey \cite{Fishier_2007Local}.

 The linearization problem in this paper is  inspired by studying a corresponding local rigidity question for a class of parabolic  $\ZZ^2$-action on the cylinder $\TT\times\RR$. 
More precisely,
 consider a $\ZZ^2$-action $(G_1, G_2)$ generated by two  linear twist  maps  $G_1(x,y)= (x+y+\alpha_1, y)$ and $G_2(x,y)=(x+y+\alpha_2,y)$. 
Let $(\tilde G_1, \tilde G_2)$ be a small perturbation of the action $(G_1, G_2)$. Then, analogous to \cite{Moser90_commuting} one may assume  that  the frequency maps satisfy a simultaneous Diophantine condition as follows, 
\[
 \max_{j=1,2}\left|e^{i2\pi m(y+\alpha_j)}-1\right|\geq\frac{\sigma}{|m|^\tau} ,\quad \forall~m\in \ZZ\setminus\{0\},\quad \forall ~ y\in \RR.
 \]
Nevertheless, this also implies that the number $\alpha_2-\alpha_1$ is Diophantine ( by taking $y=-\alpha_1$ in the inequality above). Meanwhile, observe that  the diffeomorphisms $\tilde G_1$ and $\tilde G_2\circ\tilde G_1^{-1}$ are also  generators of the  action $(\tilde G_1, \tilde G_2)$, so the local rigidity problem of $(\tilde G_1, \tilde G_2)$ is equivalent to that of 
 $(\tilde G_1, \tilde G_2\circ\tilde G_1^{-1})$. 
Thus, one finds that $\tilde G_1$ is of the form \eqref{form_cF}, and
  $\tilde G_2\circ\tilde G_1^{-1}$ is a small perturbation of the translation map $T_{\alpha_2-\alpha_1}$, so it is of  the form \eqref{form_cK}. Consequently, it reduces to  the simultaneous linearization problem of commuting 
 diffeomorphisms $\cF$ and $\cK$ given in \eqref{form_cF}--\eqref{form_cK}.

\subsection{Statement of results}
Denote by $\Diff(\TT\times\RR)$ the set of $C^\infty$ diffeomorphisms of the infinite cylinder $\TT\times\RR$ that are homotopic to the identity. The  diffeomorphisms $\cF$ and $\cK$ defined in \eqref{form_cF}--\eqref{form_cK} belong to  $\Diff(\TT\times\RR)$.

A number $\alpha\in\RR$ is said to be \emph{Diophantine} if there exist $\tau>0$ and $\sigma>0$ such that  \begin{equation}\label{def_Dioph}
 \left|e^{i2\pi m\alpha}-1\right|\geq\frac{\sigma}{|m|^\tau }\qquad \forall~m\in \ZZ\setminus\{0\}.
 \end{equation}
In the sequel, we denote by  $\mathrm{DC}(\sigma,\tau)$ the set of all numbers satisfying \eqref{def_Dioph}.

\begin{defn}\cite{Siegel_Moser1971}
	A map $F(x,y)$ of $\TT\times\RR$ is said to satisfy the \emph{intersection property} if each homotopically nontrivial circle $C^1$-close
	to $\{y=const\}$ intersects its image under $F$.
\end{defn}
 
 \begin{Rem}
 	It is known that    
 any exact symplectic map of  $\TT\times\RR$ 
 has the intersection property.   In addition,  an area-preserving map of  $\TT\times \RR$ having at least one homotopically nontrivial invariant circle  also satisfies such a property. Here, we mention that the intersection property was also  used to obtain the KAM-type result (e.g. codimension-one invariant tori) for certain non-symplectic maps of $\TT^d\times\RR$, $d\geq 1$ \cite{Cheng_Sun1989, Xia_1992, Yoccoz_1992}.
 \end{Rem}

We need the notion of semi-conjugacy.  
\begin{defn}
	 For the map $\cK$ defined in \eqref{form_cK}, we say $\cK$ is \emph{Lipschitz semi-conjugate} to the rigid circle rotation $R_\alpha$  $: x\mapsto x+\alpha$ mod $\ZZ$ if there exists a Lipschitz continuous surjective  map $W: \TT\times\RR\to \TT$  such that  $W \circ \cK=R_\alpha \circ W$. 
The Lipschitz semi-conjugacy $W$ can be written as  
 $W(x,y)=x+v(x,y)$ mod $\ZZ$ for some function $v\in \textup{Lip}(\TT\times\RR,\RR)$. 
\end{defn} 

For example,  $K(x,y)=(x+\alpha,y+k_2(x,y))$ is always Lipschitz semi-conjugate to the  rotation $R_\alpha$ via the projection map $\pi_1(x,y)=x$, that is
 $\pi_1 \circ K=R_\alpha \circ \pi_1$.

Throughout this paper,  the frequency map $\omega(y)$ in  $\cF_0(x,y)=(x+\omega(y),y)$
is always assumed to be \textbf{non-degenerate}, in the sense that 
$\omega(y): \RR\to\RR$ is a smooth diffeomorphism of $\RR$. This also implies  that $\cF_0$ is smoothly conjugate to the standard twist map $(x,y)\mapsto (x+y, y)$, see Section \ref{section_renorm}. 
We are now ready to state the main result.
 \begin{THEalpha}\label{MainR1}
 Let  $\cF,  \cK \in \Diff(\TT\times\RR)$ be commuting diffeomorphisms 
 defined as in \eqref{form_cF} and \eqref{form_cK}.
  Suppose that $\cF$ satisfies the intersection property and $\cK$ is Lipschitz semi-conjugate to the rigid rotation  $R_\alpha$ $: x\mapsto x+\alpha$ mod $\ZZ$ with $\alpha\in \mathrm{DC}(\sigma,\tau)$.
 
 Then, there exists $\mu=\mu(\tau)>0$ such that:  
for any $\delta>0$ and  bounded open interval $\rD\subset \RR$ we denote by
 $\mathcal{I}_\delta=\{y\in\RR:~\textup{dist}(y, \mathcal I)<\delta\}$  the $\delta$-neighborhood of $\mathcal I$, if
\[\left\|\cF-\cF_0\right\|_{C^\mu(\TT\times \rD_\delta)}<\vep_0,\qquad\left\|\cK-T_\alpha\right\|_{C^\mu(\TT\times \rD_\delta)}<\vep_0\]
for a sufficiently small  $\vep_0=\vep_0(\tau, \mathcal{I}, \delta)>0$,
then $\cF$ and $\cK$ can be simultaneously $C^\infty$-conjugated to $\cF_0$ and $T_\alpha$ on $\TT\times\rD$, in the sense that there exists a  $C^\infty$ diffeomorphism $H$ from $\TT\times\rD$ onto its image such that 
 \begin{align*}
 	H^{-1}\circ \cF \circ H=\cF_0,\qquad
 	H^{-1}\circ \cK \circ H=T_\alpha.
 \end{align*}
\end{THEalpha}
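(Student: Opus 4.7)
My plan is a quadratically convergent, Newton-type KAM iteration performed simultaneously on the pair $(\cF,\cK)$. Since $\omega$ is non-degenerate, a preliminary smooth change of $y$-coordinates reduces the problem to $\cF_0(x,y)=(x+y,y)$. I construct conjugacies $H_n$ on $\TT\times\rD_{\delta_n}$ so that $\cF^{(n)}:=H_n^{-1}\circ\cF\circ H_n=\cF_0+f^{(n)}$ and $\cK^{(n)}:=H_n^{-1}\circ\cK\circ H_n=T_\alpha+k^{(n)}$ satisfy $\|f^{(n)}\|_{C^{s_n}},\|k^{(n)}\|_{C^{s_n}}\leq \vep_n$ with $\vep_{n+1}\sim \vep_n^{3/2}$, while $s_n$ drops by a fixed amount $\tau'=\tau'(\tau)$ at each step and $\delta_n$ shrinks geometrically. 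Starting from smallness at finite regularity $C^\mu$, standard tame smoothing (Moser/Jackson) together with the $C^\infty$ regularity of $\cF,\cK$ upgrade the limit $H_\infty$ to $C^\infty$. Each correction $H_{n+1}=H_n\circ(\mathrm{Id}+u_n)$ is obtained by solving, modulo a quadratic remainder, the linearized conjugacy system
\begin{align*}
  u_n\circ\cF_0-D\cF_0\cdot u_n&=f^{(n)},\\
  u_n\circ T_\alpha-u_n&=k^{(n)}.
\end{align*}

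Both components of the $\cK$-equation are solved in Fourier series in $x$: for nonzero modes $m$ one divides by $e^{2\pi im\alpha}-1$ using $\alpha\in\mathrm{DC}(\sigma,\tau)$, paying the $|m|^\tau$ derivative loss. The real obstructions are the zero modes $\hat k^{(n)}_{j,0}(y)=\int_\TT k^{(n)}_j(x,y)\,dx$. The Lipschitz semi-conjugacy $W\circ\cK=R_\alpha\circ W$ transfers under conjugation to $(W\circ H_n)\circ\cK^{(n)}=R_\alpha\circ(W\circ H_n)$ with uniformly bounded Lipschitz norm, and the resulting coboundary identity $k^{(n)}_1=v^{(n)}-v^{(n)}\circ\cK^{(n)}$ structures the $j=1$ zero mode so that it can be absorbed into the iteration as a rotation-number correction. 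For the $\cF$-equation, splitting $u_n=(u_{n,1},u_{n,2})$ gives $u_{n,2}\circ\cF_0-u_{n,2}=f^{(n)}_2$, whose obstruction $\int_\TT f^{(n)}_2(x,y)\,dx$ is annihilated by the intersection property of $\cF^{(n)}$ (preserved under conjugation), and $u_{n,1}\circ\cF_0-u_{n,1}=f^{(n)}_1+\omega'(y)u_{n,2}$, which would require the $y$-dependent, non-uniformly Diophantine denominators $e^{2\pi imy}-1$ were it to be solved in isolation.

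This last point is where commutativity is essential. Linearizing $\cF\circ\cK=\cK\circ\cF$ at $(\cF_0,T_\alpha)$ yields the infinitesimal compatibility
\[
  f\circ T_\alpha-f-D\cF_0\cdot k+k\circ\cF_0=O(\|f\|\,\|k\|),
\]
so the two linearized systems are not independent: $u_n$ chosen to satisfy the $\cK$-equation (whose denominators depend only on the single Diophantine $\alpha$) automatically satisfies the $\cF$-equation up to a quadratic remainder that is fed into the next step. This is the twist-map analogue of Moser's trick for commuting circle diffeomorphisms and replaces the uncountably many putative Diophantine conditions on $y$ by the single condition on $\alpha$. The residual zero-mode discrepancies $\hat k^{(n)}_{2,0}$ and $\hat f^{(n)}_{1,0}$ are then controlled through the commutator identity combined with the two structural hypotheses.

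The main obstacle I anticipate is precisely the coupled bookkeeping of all these obstructions through the iteration: each step must preserve the intersection property for $\cF^{(n)}$ (immediate, as conjugation preserves it) and the semi-conjugacy for $\cK^{(n)}$ with uniformly controlled Lipschitz constant (delicate, because Lipschitz norms do not interpolate as nicely as $C^s$-norms under the KAM smoothing), and simultaneously close the $k_1$-obstruction via the semi-conjugacy, the $f_2$-obstruction via the intersection property, and the remaining $k_2$- and $f_1$-obstructions via the commutator identity — none of the three mechanisms can close alone, and it is their interlocking that keeps the scheme viable. Once this bookkeeping is set up, the analytic KAM estimates (tame inversion, derivative loss $s_{n+1}=s_n-\tau'-1$, geometric $\delta_n$-shrinkage, quadratic convergence) are classical Moser/Zehnder-type and yield $H_n\to H_\infty$ in $C^\infty$ on $\TT\times\rD$ with $H_\infty^{-1}\circ\cF\circ H_\infty=\cF_0$ and $H_\infty^{-1}\circ\cK\circ H_\infty=T_\alpha$.
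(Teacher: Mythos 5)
Your proposal follows essentially the same route as the paper: reduce to the standard twist $U_0(x,y)=(x+y,y)$ via the non-degeneracy of $\omega$, solve the $T_\alpha$-side linearized equation using the Diophantine condition, invoke commutativity (Moser's trick) so that this solution approximately solves the $U_0$-side equation, kill the zero modes of $f_2$ by the intersection property, of $k_1$ by the coboundary identity coming from the Lipschitz semi-conjugacy (whose Lipschitz constant is propagated with uniform control), of $k_2$ by the commutator identity, and of $f_1$ by the $-u_2$ term in the first component of the linearized twist operator, then close the scheme with smoothing operators and interpolation. This matches the paper's argument in structure and in every essential mechanism.
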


\begin{Rem}
  It is worth noting that  we do not presuppose the intersection property for $\cK$. We also do not presuppose the existence of any invariant circle for  $\cK$.
\end{Rem}

\begin{Rem}
Even if $\cF$ and $\cK$ are assumed to be both symplectic, the conjugacy $H$ is in general non-symplectic.
	  For example, let  $\cF_0(x,y)=(x+y,y)$ and we consider the perturbations 
	  $\cF(x,y)=(x+y+\vep y, y)$, with $\vep\ll 1$, and $\cK=T_\alpha$. Then, we can define   \[H: (x,y)\mapsto( x, \frac{y}{1+\vep} ).\] 
	  It is easy to check that $H^{-1}\circ\cF\circ H(x,y)=(x+y,y)=\cF_0$ and 
	$H^{-1}\circ\cK\circ H(x,y)=T_\alpha$. Obviously, $H$ is non-symplectic if $\vep\neq 0$.
\end{Rem}

\begin{Rem}  
  As we will see in Section \ref{section_Mproof}, for the value $\mu=\mu(\tau)$
 	 it is enough to take any number greater than or equal to $15(\varrho+1)=15([\tau]+3)$.  
 \end{Rem}

We point out that without  the  semi-conjugacy condition our simultaneous linearization result is not true in general.
The following existence result shows that using only  
   the intersection property of $\cF$ and the commutativity condition can not guarantee  that both maps are  linearizable.

\begin{PROalpha}\label{main_pro_1}
Let $\alpha\in\RR$, and $\mathcal{I}$ be a bounded open interval and $\delta>0$.
For any  $r\in \NN$ and any small $\vep>0$, we can always find  two commuting diffeomorphisms  $\cF,  \cK \in \Diff(\TT\times\RR)$ where  $\cF$
  satisfies the intersection  property and 
 \[\left\|\cF-\cF_0\right\|_{C^r(\TT\times \mathcal{I}_\delta)}<\vep,\qquad\left\|\cK-T_\alpha\right\|_{C^r(\TT\times \mathcal{I}_\delta)}<\vep,\]
 but at least one of the maps
 $\cF$ and $\cK$ is non-integrable in $\TT\times \mathcal{I}$.
\end{PROalpha}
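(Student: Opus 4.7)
The plan is to construct $(\mathcal{F},\mathcal{K})$ via an Anosov--Katok style approximation-by-conjugation scheme adapted to the commuting-pair setting. I will pick rapidly convergent rational approximants $\alpha_n=p_n/q_n\to\alpha$, noting that each $T_{\alpha_n}$ is periodic of period $q_n$, and then inductively build area-preserving smooth diffeomorphisms $H_n=H_{n-1}\circ h_n$ of the cylinder, supported in a neighborhood of $\TT\times\mathcal{I}$, with each $h_n$ carefully tailored to the period-$q_n$ symmetry of $T_{\alpha_n}$. Setting
\[
\mathcal{F}_n:=H_n\circ\mathcal{F}_0\circ H_n^{-1},\qquad \mathcal{K}_n:=H_n\circ T_{\alpha_n}\circ H_n^{-1},
\]
each pair commutes (since $\mathcal{F}_0$ and $T_{\alpha_n}$ do), and each $\mathcal{F}_n$ is area-preserving. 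The essential feature of the AK construction is the balance between two effects: the $h_n$ are chosen largely within the centralizer of $T_{\alpha_n}$ so that $\mathcal{K}_n$ stays close to $T_{\alpha_n}$ (and hence, for $|\alpha-\alpha_n|$ small, close to $T_\alpha$) and $\mathcal{F}_n$ close to $\mathcal{F}_0$; while carefully placed fine-scale ``stretching'' content is introduced at scale $q_n^{-1}$ to prevent the sequence $H_n$ from converging in $C^\infty$.

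By calibrating the rate $|\alpha-\alpha_n|$ against the $C^r$-size of $h_n$ (the standard AK balance), one arranges that $\mathcal{F}_n,\mathcal{K}_n$ converge in $C^r(\TT\times\mathcal{I}_\delta)$ to smooth commuting limits $(\mathcal{F},\mathcal{K})$ within $\vep$ of $(\mathcal{F}_0,T_\alpha)$, even though $H_n$ itself diverges. Commutation in the limit is automatic (as a limit of commuting pairs), and the intersection property for $\mathcal{F}$ follows from area-preservation via Birkhoff's lemma. By construction, no smooth diffeomorphism simultaneously conjugates $(\mathcal{F},\mathcal{K})$ back to $(\mathcal{F}_0,T_\alpha)$.

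The main obstacle is the last step: promoting this absence of smooth conjugacy into genuine non-integrability of at least one of $\mathcal{F},\mathcal{K}$ on $\TT\times\mathcal{I}$, i.e., the absence of any $C^\infty$ conjugacy to the integrable normal form $(x,y)\mapsto(x+\phi(y),y)$. The idea is that the AK/Fayad--Saprykina analysis can be arranged so that the limit $\mathcal{K}$ has a Cantor-set-of-invariant-circles structure (or weakly mixing behaviour) on $\TT\times\mathcal{I}$, incompatible with any smooth invariant foliation by invariant circles. The delicate point, compared with the classical single-map setting, is that we must simultaneously control the smoothness of two conjugates $\mathcal{F}_n,\mathcal{K}_n$ through one sequence $H_n$, which restricts the freedom in the choice of $h_n$. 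This is most sensitive when $\alpha$ is Diophantine and rational approximation is slow; there, the $h_n$ have to be engineered to exploit both the $T_{\alpha_n}$-periodicity and the $\mathcal{F}_0$-twist structure simultaneously, so that neither $\mathcal{F}_n$ nor $\mathcal{K}_n$ loses smoothness in the limit while the non-integrability of one of them survives.
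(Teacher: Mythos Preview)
Your Anosov--Katok plan is vastly more elaborate than what the statement requires, and it has a genuine gap precisely where you flag it: for Diophantine $\alpha$ the AK balance cannot be struck. The rational approximants $p_n/q_n$ converge too slowly to absorb the $C^r$-cost of nontrivial $h_n$, so either the sequence $(\mathcal F_n,\mathcal K_n)$ fails to converge in $C^r$ or the limit is trivially integrable. You acknowledge this as ``most sensitive'' but offer no mechanism to overcome it; in the classical single-map AK theory this obstruction is real, not merely technical, and nothing in your outline circumvents it. Your final paragraph is a description of difficulties, not a resolution of them.

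The paper's proof, by contrast, is a two-line construction that works for \emph{every} $\alpha$, Diophantine or not. Choose a single rational $p/q$ with $|\alpha-p/q|<\vep$ and set $\mathcal K=T_{p/q}$ exactly (this already satisfies $\|\mathcal K-T_\alpha\|_{C^r}<\vep$). For $\mathcal F$, take a generalized standard map
\[
S_\vep(x,y)=\bigl(x+y+\tfrac{\vep}{(2\pi q)^r}\sin 2\pi qx,\ y+\tfrac{\vep}{(2\pi q)^r}\sin 2\pi qx\bigr),
\]
then conjugate by $(x,y)\mapsto(x,\omega^{-1}(y))$ to match $\mathcal F_0$. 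Because the perturbation is $\tfrac{1}{q}$-periodic in $x$, $S_\vep$ commutes with $T_{p/q}$ trivially; it is exact symplectic, hence has the intersection property; and standard maps are non-integrable for arbitrarily small $\vep\neq 0$. No limiting procedure, no convergence issues, no constraint on the arithmetic of $\alpha$. The key idea you missed is that $\mathcal K$ need not have rotation number $\alpha$ at all---it only has to be $C^r$-close to $T_\alpha$, and a nearby rational rotation suffices.
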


In Proposition \ref{main_pro_1} there is no restriction on the number $\alpha$  (not necessarily irrational or Diophantine).  
 
As a direct corollary of Theorem \ref{MainR1} and Proposition \ref{main_pro_1}, we obtain a result concerning perturbations of abelian actions generated by integrable twist maps.  

We say that two maps $\tilde G_1, \tilde G_2$ of the cylinder are $\alpha$-\emph{compatible} if $\tilde G_2\circ\tilde G_1^{-1}$ is Lipschitz semi-conjugate to the rigid circle rotation $x\mapsto x+\alpha$. 
 
 \begin{CORalpha}\label{coro} 
Consider two  linear twist  maps  $G_1(x,y)= (x+y+\alpha_1, y)$ and $G_2(x,y)=(x+y+\alpha_2,y)$ with $\alpha:= \alpha_2-\alpha_1$ a Diophantine number. Let $\tilde G_1, \tilde G_2$ be commuting  diffeomorphisms of the cylinder which are
  $\alpha$-compatible, and such that some element of the action $(\tilde G_1, \tilde G_2)$ has the intersection property.  If $\tilde G_1, \tilde G_2$ are sufficiently close to $G_1, G_2$, respectively, then $\tilde G_1, \tilde G_2$ are  simultaneously smoothly conjugate to $G_1, G_2$, respectively. Consequently, all the elements of the action $(\tilde G_1, \tilde G_2)$ are integrable. 
  
 Without the $\alpha$-compatibility condition, there exist perturbations  $\tilde G_1, \tilde G_2$  such that the action $(\tilde G_1, \tilde G_2)$ contains a non-integrable element. 
\end{CORalpha}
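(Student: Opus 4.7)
The plan is to deduce the corollary directly from Theorem \ref{MainR1} and Proposition \ref{main_pro_1} after a change of generators.

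For the positive direction, pass to the equivalent generating pair $\cF:=\tilde G_1$ and $\cK:=\tilde G_2\circ\tilde G_1^{-1}$, which generates the same $\ZZ^2$-action. Then $\cF$ is a small perturbation of $G_1(x,y)=(x+y+\alpha_1,y)$, which matches the template \eqref{form_cF} with frequency $\omega(y)=y+\alpha_1$—a diffeomorphism of $\RR$, hence non-degenerate—while $\cK$ is a small perturbation of $G_2\circ G_1^{-1}=T_{\alpha_2-\alpha_1}=T_\alpha$, matching \eqref{form_cK}. Commutativity of $\cF$ and $\cK$ follows from that of $\tilde G_1,\tilde G_2$, and the $\alpha$-compatibility hypothesis is exactly the Lipschitz semi-conjugacy of $\cK$ to $R_\alpha$. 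Assuming (after an $\mathrm{SL}(2,\ZZ)$-change of basis if necessary) that $\tilde G_1$ itself carries the intersection property, Theorem \ref{MainR1} supplies a smooth diffeomorphism $H$ with
\[H^{-1}\circ\cF\circ H = G_1,\qquad H^{-1}\circ\cK\circ H = T_\alpha.\]
Composition gives $H^{-1}\circ\tilde G_2\circ H=(H^{-1}\cK H)(H^{-1}\cF H)=T_\alpha\circ G_1=G_2$; since every element $\tilde G_1^{m}\tilde G_2^{n}$ is thereby conjugated to the standard twist $G_1^{m}G_2^{n}$, the whole action is integrable.

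For the failure without $\alpha$-compatibility, invoke Proposition \ref{main_pro_1} with the integrable model $\cF_0=G_1$ and the translation $T_\alpha$ on $\TT\times\mathcal{I}$, producing commuting smooth perturbations $\cF,\cK$ where $\cF$ has the intersection property and (at least) one of $\cF,\cK$ is non-integrable. Set $\tilde G_1:=\cF$ and $\tilde G_2:=\cK\circ\cF$: these commute, and are $C^{r}$-close to $G_1,G_2$ respectively. Since $\tilde G_2\circ\tilde G_1^{-1}=\cK$, and Proposition \ref{main_pro_1} imposes no Lipschitz semi-conjugacy on $\cK$, one can arrange the example so that the pair $(\tilde G_1,\tilde G_2)$ fails to be $\alpha$-compatible; the non-integrable element among $\cF,\cK$ then belongs to the action $(\tilde G_1,\tilde G_2)$.

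The main obstacle is the reduction in the first paragraph: the hypothesis only supplies \emph{some} $\tilde G_1^{m}\tilde G_2^{n}$ with the intersection property, whereas Theorem \ref{MainR1} needs it for the specific generator that is close to the non-degenerate twist. One overcomes this by completing $(m,n)$ to a $\ZZ^2$-basis $\{(m,n),(p,q)\}$ with total twist index $m+n\neq 0$, using this basis to define new generators $U=\tilde G_1^{m}\tilde G_2^{n}$ and $V=\tilde G_1^{p}\tilde G_2^{q}$; then $U$ is a perturbation of a non-degenerate twist, and the Lipschitz semi-conjugacy needed for $V\circ U^{-1}$ follows from the original $\alpha$-compatibility together with the observation that compositions and inverses of Lipschitz semi-conjugacies to rigid rotations are again Lipschitz semi-conjugacies to rigid rotations (with the correct rotation number prescribed by the $\mathrm{SL}(2,\ZZ)$ change of basis, which remains Diophantine since $\alpha$ is). This bookkeeping is routine but non-automatic.
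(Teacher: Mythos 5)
Your core argument is exactly the paper's intended one: pass to the generators $\cF=\tilde G_1$, $\cK=\tilde G_2\circ\tilde G_1^{-1}$, apply Theorem \ref{MainR1} for the positive direction, and use the construction of Proposition \ref{main_pro_1} (with $\tilde G_1=\cF$, $\tilde G_2=\cK\circ\cF$) for the negative one; both of these steps, including the verification that the counterexample fails $\alpha$-compatibility because $T_{p/q}$ cannot be semi-conjugate to $R_\alpha$, are correct.

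The problem is your final paragraph, which purports to reduce ``some element $\tilde G_1^m\tilde G_2^n$ has the intersection property'' to ``the first generator has it.'' As written this repair does not work. First, $(m,n)$ extends to a $\ZZ^2$-basis only when $\gcd(m,n)=1$, which is not given. Second, and more seriously, for a completion $\{(m,n),(p,q)\}$ the element $V\circ U^{-1}=\tilde G_1^{p-m}\tilde G_2^{q-n}$ is close to $G_1^{p-m}G_2^{q-n}$, whose linear part is the twist $(x,y)\mapsto(x+((p-m)+(q-n))y+\text{const},y)$; it is close to a rigid translation only if $p+q=m+n$, and combining this with $mq-np=\pm1$ forces $m+n=\pm1$. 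So the pair $(U,V\circ U^{-1})$ fits the hypotheses of Theorem \ref{MainR1} only in the case $m+n=\pm1$, and your claim that the needed Lipschitz semi-conjugacy ``follows from the original $\alpha$-compatibility'' is built on the false premise that $V\circ U^{-1}$ is near a rotation. Third, the case $m+n=0$ (the intersection property held only by an element near a translation) is silently excluded, and it cannot be absorbed by symmetry: in the KAM step the intersection property is used to kill $[\bff_2]$ for the \emph{twist-like} generator, while commutativity only controls $[\bfk_2]$, so the two roles are not interchangeable. A workable route for $m+n\neq 0$ is instead to apply Theorem \ref{MainR1} to the commuting (not necessarily generating) pair $(\tilde G_1^m\tilde G_2^n,\ \tilde G_2\tilde G_1^{-1})$ and then upgrade to the generators: in the straightened coordinates $H^{-1}\tilde G_1 H$ commutes with $T_\alpha$, so by the Diophantine condition its perturbation terms are independent of $x$, commutation with the linearized twist forces the second component to be unperturbed, and an extraction-of-roots argument recovers $G_1$ itself. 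Absent such an argument you should simply assume, as the paper implicitly does, that the intersection property holds for $\tilde G_1$.
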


As a by-product of 
 Theorem \ref{MainR1}, one can study the local perturbation problem of certain commuting non-ergodic maps of $\TT^2$. Recall that an automorphism of $\TT^2$ is determined by a matrix in $\textup{GL}(2,\ZZ)$ with determinant $\pm 1$. Given a matrix  $A=
\left(
\begin{matrix}
	1  & n\\
	0 & 1
\end{matrix}
\right)$ with $n\in\ZZ\setminus\{0\},$
it determines a  toral automorphism which we also denote by $A$. 
 In other words,  
 \[A: \TT^2\to \TT^2,   \qquad (x, y)\mapsto (x+ny~\textup{mod}~ 1, y). \]
We also consider a transltion of $\TT^2$ given by $(x,y)\mapsto (x+\alpha, y)$, and  for simplicity we still denote by the same symbol $T_\alpha$. For such toral maps,
it is easy to see that  
$T_\alpha$ is homotopic to the identity while $A$ is not. Moreover,  the commutation relation $A\circ T_\alpha=T_\alpha\circ A$ holds.  

We are interested in the local perturbation problem, so we consider two   perturbations $F_A\in \textup{Diff}^\infty(\TT^2)$ and $G\in \textup{Diff}^\infty(\TT^2)$ which are homotopic to $A$ and $id$ respectively. 
This means that  $F_A$ and $G$ can be defined by $F_A=A+ f$ and $G=T_\alpha+ g$, and $f(x,y), g(x,y)\in C^\infty(\TT^2,\RR^2)$ are both $\ZZ$-periodic in $x$ and $y$.  
Then, under certain assumptions we have the following local result.

 \begin{CORalpha}\label{Cor_T2map}
For $\alpha\in \textup{DC}(\sigma,\tau)$,
we can find $\mu=\mu(\tau)>0$ and a small number  $\vep_0=\vep_0(\tau)>0$ such that:  for any pair of commuting maps $F_A\in \textup{Diff}^\infty(\TT^2) $ and $G\in \textup{Diff}^\infty(\TT^2)$, if the
$C^\mu$-distance satisfies
\[\textup{dist}_{C^\mu}(F_A, A)<\vep_0,\qquad \textup{dist}_{C^\mu}(G, T_\alpha)<\vep_0, \]
and if  $F_A$ satisfies the intersection property and $G(x,y)$ is Lipschitz semi-conjugate to the rigid circle rotation $x\mapsto x+\alpha$, then
 there exists a near-identity $C^\infty$ diffeomorphism  $H:\TT^2\to\TT^2$  such that  $H^{-1}\circ F_A\circ H=A$ and  $H^{-1}\circ G\circ H=T_\alpha$.
  \end{CORalpha}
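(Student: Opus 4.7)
The plan is to reduce Corollary~\ref{Cor_T2map} to Theorem~\ref{MainR1} by unfolding the $\TT^2$-dynamics onto the cylinder. Let $\pi:\TT\times\RR\to\TT^2$, $(x,y)\mapsto(x,y\,\textup{mod}\,1)$, denote the natural projection, and let $S:(x,y)\mapsto(x,y+1)$ generate the deck group. The maps $A,T_\alpha$ on $\TT^2$ lift to $\cF_0(x,y)=(x+ny,y)$ and $T_\alpha(x,y)=(x+\alpha,y)$ on $\TT\times\RR$, and since the perturbations $f,g$ are $\ZZ$-periodic in $y$, the maps $F_A,G$ lift uniquely to commuting $\cF,\cK\in\Diff(\TT\times\RR)$ which in addition commute with $S$.

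I would then verify the hypotheses of Theorem~\ref{MainR1}. The frequency $\omega(y)=ny$ is a smooth diffeomorphism of $\RR$ because $n\neq 0$, providing non-degeneracy. Fixing a bounded open interval $\mathcal{I}\supset[-1,2]$, the $C^\mu$-smallness of $\cF-\cF_0$ and $\cK-T_\alpha$ on $\TT\times\mathcal{I}_\delta$ follows from the $C^\mu$-smallness on $\TT^2$ and $\ZZ$-periodicity in $y$. For the intersection property of $\cF$: a homotopically nontrivial circle $C\subset\TT\times\RR$ that is $C^1$-close to $\{y=c\}$ projects under $\pi$ to such a circle on $\TT^2$, the intersection property of $F_A$ produces $p,q\in C$ and $k\in\ZZ$ with $\cF(p)=q+(0,k)$, and $C^1$-closeness forces $k=0$. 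The Lipschitz semi-conjugacy $W:\TT^2\to\TT$ of $G$ pulls back to $\tilde W:=W\circ\pi$, which is Lipschitz, surjective, satisfies $\tilde W\circ\cK=R_\alpha\circ\tilde W$, and retains the form $x+v\,\textup{mod}\,\ZZ$.

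Theorem~\ref{MainR1} then supplies a $C^\infty$ diffeomorphism $H$ on a neighborhood of $\TT\times[0,1]$ with $H^{-1}\cF H=\cF_0$ and $H^{-1}\cK H=T_\alpha$. To descend $H$ to $\TT^2$ it suffices to show that $H\circ S=S\circ H$. Define $H':=S^{-1}\circ H\circ S$; because $S$ commutes with each of $\cF,\cK,\cF_0,T_\alpha$, the map $H'$ is also a simultaneous conjugacy on the overlapping domain. Then $\Phi:=H^{-1}\circ H'$ lies in the centralizer of $\{\cF_0,T_\alpha\}$, and a short computation using the irrationality of $\alpha$ together with the twist $\omega'=n\neq 0$ shows that the near-identity part of this centralizer consists exactly of maps $(x,y)\mapsto(x+a(y),y)$ with $a\in C^\infty(\RR)$ small. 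The remaining task is to force $a\equiv 0$.

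The main obstacle is exactly this rigidity step. The preferred route is to exploit the canonical nature of the KAM scheme underlying the proof of Theorem~\ref{MainR1}: at each iteration the conjugation increment is obtained by solving cohomological equations in $x$ with a mean-zero normalization, so when the data commute with $S$ the increment commutes with $S$ as well; inductively every partial conjugacy commutes with $S$, hence so does the limit $H$. An alternative is a local uniqueness argument: among near-identity conjugacies with $\int_\TT(H_1(x,y)-x)\,dx\equiv 0$ the conjugacy is unique, and both $H$ and $H'$ inherit this normalization from the $S$-equivariance of the data. Either route yields $\Phi=\textup{id}$, so $H$ commutes with $S$ and descends to a $C^\infty$ diffeomorphism of $\TT^2$ (a global diffeomorphism by $C^1$-closeness to the identity) realizing the simultaneous conjugation.
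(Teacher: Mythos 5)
Your reduction is the same as the paper's: lift $F_A$ and $G$ to the cylinder using the $\ZZ$-periodicity in $y$ of the perturbations, verify the hypotheses of Theorem~\ref{MainR1} for the lifts (your checks of the intersection property and of the pulled-back semi-conjugacy are correct), and apply the theorem on a neighborhood of $\TT\times[0,1]$. The paper presents the corollary as a direct application and does not discuss the descent of the conjugacy back to $\TT^2$; you correctly isolate this as the one nontrivial point and supply an argument for it. Of your two routes, the first is the right one and is what the construction actually delivers: the increment $\bfh$ at each KAM step is built from $\xi_N$ (the mean-zero solution of the cohomological equation, obtained fibrewise in $y$) and from averages of smoothed data, and the smoothing operators $\rS_N$ preserve periodicity in $y$ (as remarked after Lemma~\ref{Lem_trun}); hence each $H^{(i)}$, and therefore the limit conjugacy, commutes with the deck transformation. (Note that after the initial reduction by $Q(x,y)=(x,y/n)$ the relevant deck translation becomes $y\mapsto y+n$, which changes nothing.) Your second route is not justified as stated: the limit conjugacy is a composition of maps each of whose first components has mean-zero deviation, but the composition itself need not satisfy $\int_\TT(H_1(x,y)-x)\,dx\equiv 0$, so the normalization you invoke for $H$ is not automatic. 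Since you designate the equivariance argument as the primary route, the proof stands, and it is in fact more complete than the paper's one-line justification.
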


  The proof of Corollary \ref{Cor_T2map} is a direct application of Theorem \ref{MainR1}. Note that $F_A$ (resp. $G$) is necessarily homotopic to $A$ (resp. $id$) because the toral diffeomorphism $F_A$ (resp. $G$) is sufficiently close to $A$ (resp. $T_\alpha$). Thus, 
    the diffeomorphism $F_A$ of $\TT^2$ admits a  natural lift to  the infinite cylinder $\TT\times\RR$, denoted by $\tilde{F_A}$, which can be defined  by $\tilde{F_A}(x,y)=(x+ny+ f_1(x,y)~\textup{mod}~ 1 ,y +f_2(x,y) )$ where $f_1, f_2\in C^\infty(\TT\times\RR, \RR)$  are  periodic in both $x$ and $y$ with period $1$. Meanwhile,  
   the diffeomorphism $G$  also admits a natural lift to  $\TT\times\RR$, denoted by $\tilde{G}$, which can be defined  by $\tilde{G}(x,y)=(x+\alpha+ g_1(x,y)~\textup{mod}~ 1 ,y +g_2(x,y) )$, where $g_1(x,y)$, $ g_2(x,y)$  are   periodic in $x$ and $y$ with period $1$. 
  Therefore,  by applying Theorem \ref{MainR1} with $\cF=\tilde{F_A}$, $\cK=\tilde G$ and $\mathcal{I}=[-1, 1]$ and $\delta=\frac{1}{2}$, we are able to obtain Corollary \ref{Cor_T2map}.

  We also remark that for perturbations of affine abelian actions on the torus $\TT^n$, with parabolic linear parts, there is a more general result on classifying  perturbations \cite{DaFaMa2021}.

\subsection{Remarks  on our assumptions and method}
The assumptions in Theorem \ref{MainR1} are essentially needed for the simultaneous linearization result.  Observe that
we have assumed  three assumptions: (1) the commutativity condition; (2) the intersection property of $\cF$; (3) the Lipschitz semi-conjugacy condition for $\cK$.

$\bullet$ The commutativity condition is important for the simultaneous linearization problem.  For example, consider $\cF(x,y)=\cF_0(x,y)=(x+y,y)$ and $\cK(x,y)=(x+\alpha, y+\vep(x,y) )$ with $\vep(x,y)\neq 0$.  Observe that $\cF$ has the intersection property, and  $\cK$ is obviously Lipschitz semi-conjugate to the circle rotation $x\mapsto x+\alpha$ mod $\ZZ$, but $\cF\circ\cK\neq \cK\circ\cF$.  For this model, 
  it is well known  that for a generic  small  perturbation $\vep(x,y)$,  $\cK$  can not be  conjugated to $T_\alpha$.

$\bullet$ The intersection property of $\cF$ is also crucial, otherwise $\cF$ may not be integrable. 
For example, consider two smooth diffeomorphisms  of the cylinder, 
 $\cF(x,y)=(x+y, y+\varepsilon(y))$ with  $\vep(y)>0$ and $\cK=T_\alpha$. Obviously,  $\cF$ commutes with $\cK$.
  But $\cF$ does not satisfy the intersection property. Then, we find that $\cF$ is non-integrable since there are no  invariant circles.

$\bullet$ The semi-conjugacy condition is also needed (see Proposition \ref{main_pro_1}). In fact,  the Lipschitz semi-conjugacy condition of $\cK$ is only used to control the average part of the perturbation  for $\cK$  during the KAM process. Besides, we also point out that  it is possible to replace  this Lipschitz semi-conjugacy condition by a H\"older semi-conjugacy  whose  H\"older exponent is  close to $1$. See Remark \ref{Rem_holdersemi} for an explanation.

Let us compare our conditions with those used in \cite{Fayad_Krikorian}. 
For a single  diffeomorphism   $\cK$ of the form \eqref{form_cK},  if  the following three conditions (see \cite{Fayad_Krikorian}) are satisfied: 
 \vspace{-0.2in}
 \begin{enumerate}[(i)]
 	\item the intersection property;
 	\item it possesses a smooth invariant circle $\Gamma$ with Diophantine rotation number  $\alpha$;
 	\item it has no periodic points.
 \end{enumerate}
  \vspace{-0.2in}
then $\cK$ can be $C^\infty$-conjugated to  $T_\alpha$ in a small neighborhood $U$ of $\Gamma$.  

If this happens,  and if one continues to assume the commutation relation $\cF\circ\cK=\cK\circ\cF$, then the other map $\cF$ would  also be integrable in the  small neighborhood $U$.  
We briefly explain it here. From the preceding analysis, one  obtains
a diffeomorphism $H_1$ from $U$ onto its image such that $H_1^{-1}\circ \cK\circ H_1=T_\alpha$.  Next, we study the conjugated map $\tilde{\cF}:=H_1^{-1}\circ \cF\circ H_1$. It commutes with $T_\alpha$, and if we write $\tilde{\cF}(x,y)=(x+\omega(y)+f_1(x,y), y+f_2(x,y))$, the commutation relation yields 
$f_1(x+\alpha,y)=f_1(x,y)$ and $ f_2(x+\alpha,y)=f_2(x,y).$ As a consequence, 
  $f_1$ and $f_2$ are supposed to be independent of the variable $x$ since $\alpha$ is Diophantine.  
   In other words, they are of the form $f_1(x,y)=f_1(y)$ and $f_2(x,y)=f_2(y)$.
On the other hand,  $\cF$ also satisfies the intersection property, so  $f_2(y)$ must be $0$.  Thus we obtain $\tilde{\cF}=(x+\tilde\omega(y), y)$, where  $\tilde\omega(y)=\omega(y)+f_1(y)$ would be invertible. Finally, using the transformation $H_2(x,y):=(x, \tilde{\omega}^{-1}\circ \omega(y))$ it  is not difficult to check that
$H^{-1}_2\circ \tilde{\cF}\circ H_2(x,y)=(x+\omega(y), y)=\cF_0(x,y)$ and $H_2^{-1}\circ T_\alpha\circ H_2=T_\alpha$. In conclusion, by setting $H=H_1\circ H_2$, we can conjugate $\cF$ and $\cK$ to $\cF_0$ and $T_\alpha$. 

However, the  above analysis can not be applied to our model since we do not assume the intersection property for the map $\cK$ nor the existence of any $\cK$-invariant circle with rotation number $\alpha$. Instead, for our purpose we impose a semi-conjugacy condition on $\cK$.

Now, we outline  the method for proving Theorem \ref{MainR1}. First, as the frequency map $\omega(y)$ is non-degenerate, 
under a suitable coordinate transformation  Theorem \ref{MainR1} can be reduced to Theorem \ref{Thm_simplified} which studies commuting maps
$\FF=U_0+\bff$ and $\KK=T_\alpha+\bfk$ with $U_0(x,y)=(x+y,y)$. Next, the technique used to prove Theorem \ref{Thm_simplified} is based on a KAM iterative scheme for the group action $(\FF, \KK)$. We linearize the nonlinear problem and solve  the corresponding  linearized equation to obtain a better approximation. By iterating this process, the limit of
 successive iterations  produces a solution to the nonlinear problem.
The commutativity is enough to provide 
a common (approximate) solution  to the linearized conjugacy equations of $(\FF,\KK)$. At each iteration step,
in order to show that the new error is smaller than the initial one, in principle
the hard part is the elimination of the average (over $x\in \TT$)  of the perturbations, i.e. $[\bff]=([\bff_1],[\bff_2])$ and $[\bfk]=([\bfk_1],[\bfk_2])$. For this purpose,  the intersection property of $\FF$ enters and causes the term $[\bff_2]$ to be of higher order, and the semi-conjugacy condition of $\KK$ causes the term $[\bfk_1]$ to be of higher order. Besides, using the commutativity condition we can show that $[\bfk_2]$ is quadratic. As for $[\bff_1]$, this term can be, to some extent, eliminated by choosing suitably an approximate solution to the cohomological equation. 
See Section \ref{section_Leq_Comm} and Section \ref{section_Induct_lem} for more discussions.

\subsection{Structure of this paper}
The paper is organized as follows. 
 Section \ref{section_example} is devoted to prove Proposition \ref{main_pro_1}, the construction is based on the generalized standard family. 
 Section \ref{section_prelim} reviews some basic concepts used in this paper.
 In Section \ref{section_renorm}, by using  a suitable coordinate transformation  we show that the simultaneous linearization problem of $\cF=\cF_0+f$ and $\cK=T_\alpha+k$ are  equivalent to that of  $\FF=U_0+\bff$ and $\KK=T_\alpha+\bfk$, where $U_0(x,y)=(x+y,y)$. Theorem \ref{MainR1} thus reduces to  
 Theorem \ref{Thm_simplified}.  In Section \ref{section_Leq_Comm} and Section \ref{section_Induct_lem}, we study the commutativity property, and prove the inductive lemma which is 
the main ingredient of the iteration process. In Section \ref{section_Mproof}, by applying inductively  Proposition \ref{Pro_iterate} we   
  use the KAM scheme to prove Theorem \ref{Thm_simplified}.  
  
\section{An example of non-integrable commuting diffeomorphisms} \label{section_example}

In this section we prove Proposition \ref{main_pro_1}. 
For this purpose, 
we first introduce 
the generalized standard family. It is a generalization of the Chirikov-Taylor standard family, and is one of the most widely studied family of monotone twist maps.
Consider symplectic diffeomorphisms of the cylinder 
 $\TT\times\RR$ which are defined by
\[S_\vep(x,y)=(x+y+\vep V'(x) ,y +\vep V'(x))\]
where $V(x)\in C^\infty(\TT,\RR)$ is $1$-periodic in $x$. 

$S_\vep$ is a small perturbation of the integrable map  $(x,y)\mapsto (x+y,y)$.   
It is an elementary  fact in symplectic geometry that such a map $S_\vep$  can be induced  by a generating function.
More precisely,  it is implicitly defined by the following  generating function
\begin{align*}
	G(x,X)=\frac{1}{2}(X-x)^2+\vep V(x)
\end{align*}
through the equations:
\[y=-\frac{\partial G(x, X)}{\partial x},\qquad Y=\frac{\partial G(x, X)}{\partial X}.\]
Thus $S_\vep$ is exact symplectic, which implies zero flux, that is 
\[\oint_{S_\vep(\gamma)} ydx=\oint_{\gamma} ydx \] 
for every non-contractible loop $\gamma$ on the cylinder.
As a consequence, $S_\vep$ satisfies the intersection property. 

We also point out that if  $V'(x)$ is $\frac{1}{q}$--periodic with $q\in \NN$, then $S_\vep$ commutes with the linear map $T_{p/q}(x,y)=(x+p/q, y)$ for any $p\in\ZZ$. Indeed,
\begin{align}\label{S_commu_T}
\begin{aligned}
S_\vep\circ T_{p/q}(x,y)=&(x+\frac{p}{q}+y+\vep V'(x+\frac{p}{q}), y+\vep V'(x+\frac{p}{q}) )\\
=&(x+\frac{p}{q}+y+\vep V'(x), y+\vep V'(x) )\\
=&T_{p/q}\circ S_\vep(x,y).
\end{aligned}
\end{align}

Now, we turn to prove Proposition \ref{main_pro_1}. The construction will be based on the generalized standard maps described above.

\begin{proof}[Proof of Proposition \ref{main_pro_1}]
Let $\alpha\in \RR$. For any  $\vep>0$ and any  $r\in \NN$, we can choose a rational number $\frac{p}{q}$ such that 
\begin{equation}\label{alpha_pq}
	 0<|\alpha-\frac{p}{q}|<\vep,
\end{equation}
and choose
$V(x)=\frac{-1}{(2\pi q)^{r+1}}\cos 2\pi qx$. Then we define a pair of smooth diffeomorphisms  
$S_\vep$ and $\cK$ by
\begin{equation}\label{S_vepform}
	S_\vep(x,y)=(x+y+\frac{\vep}{(2\pi q)^r}\sin 2\pi q x, ~y+\frac{\vep}{(2\pi q)^r}\sin 2\pi q x)
\end{equation}
and 
\begin{equation}\label{form_exK}
	\cK(x,y)=(x+\frac{p}{q},y).
\end{equation}
Since $S_\vep$ is exact symplectic, $S_\vep$ satisfies the intersection property. By \eqref{S_commu_T} we see that $S_\vep$ commutes with $\cK$. Moreover, due to \eqref{alpha_pq}--\eqref{S_vepform} the perturbations are small in the $C^r$ topology, 
\[\|S_\vep-S_0\|_{C^r}\leq\vep, \quad \|\cK-T_\alpha\|_{C^r}\leq\vep.\]
However, a basic fact is that there  always exists an arbitrarily small $\vep>0$ such that the generalized standard map $S_\vep$ is chaotic and non-integrable (see an illustration in Figure 
\ref{figure_SM}).

To finish our proof, we recall that the frequency map $\omega(y)$ in $\cF_0$ is a 
smooth diffeomorphism, and its inverse map is denoted by $\omega^{-1}(y)$. Then,
under the coordinate transformation  $Q$ which is defined by
 $Q(x,y)=(x, \omega^{-1}(y))$ and $Q^{-1}(x,y)=(x, \omega(y))$, 
 the map $S_\vep$ can be transformed into
 \begin{align*}
 	\cF=Q\circ S_\vep\circ Q^{-1}=\cF_0+f: \TT\times\RR\to  \TT\times\RR
 \end{align*}
Here, $\cF_0(x,y)=(x+\omega(y), y)$ and $f=(f_1, f_2)$ is given by
\[f_1(x,y)= \frac{\vep}{(2\pi q)^r}\sin 2\pi qx, \quad f_2(x,y)=\omega^{-1}\left(\omega(y)+\frac{\vep}{(2\pi q)^r}\sin 2\pi qx\right)-y.\]
Clearly, on the bounded region $\TT\times\mathcal{I}_\delta$,  $f$ can be arbitrarily small in the $C^r$ topology provided that $\vep$ is small enough.
Moreover, by \eqref{form_exK} we have
$\cK=Q\circ \cK\circ Q^{-1}$. 

Therefore, $\cF$ commutes with $\cK$, and $\cF$ also satisfies the intersection property. In view of the non-integrability of $S_\vep$,
 the desired result follows immediately. 	
\end{proof}

\begin{figure}[H]
	\centering
	\includegraphics[scale=0.2]{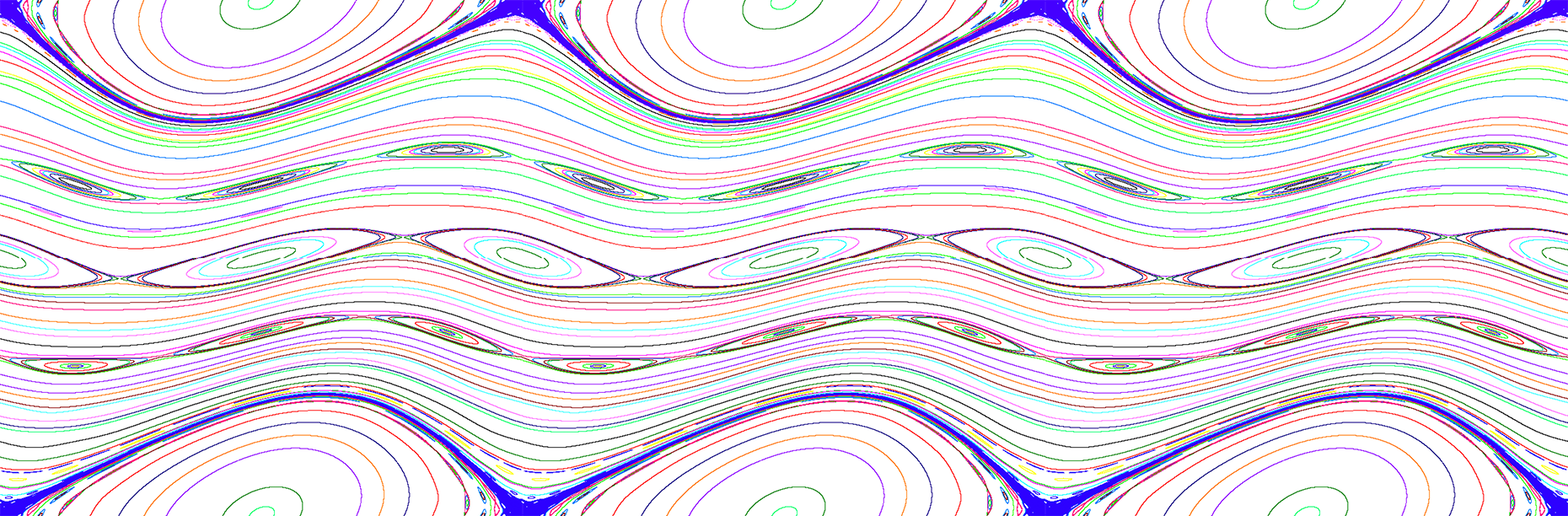}
	\caption{An example for $q=3$.}\label{figure_SM}
\end{figure}

\section{Preliminaries}\label{section_prelim}
	
In this section we review some  basic terminology.

A Fr\'echet space $X$ is defined to be a complete metrizable locally convex topological vector space. Its topology may be induced  by  a family of seminorms $\{\|\cdot\|_r\}_r$. 
A Fr\'echet space $X$ is \emph{graded} if the topology is defined by a family  of semi-norms $\{\|\cdot\|_r\}_r$ satisfying $\|x\|_s\leq \|x\|_t$ for every $x\in X$ and $s\leq t$. For example, the space $C^\infty(\TT^d,\RR)$ with the topology given by the $C^r$ semi-norms $|g|_r=\max_{|j|=r}\sup_{z\in\TT^d}|\partial^j g(z)|$, $r\in \NN$ is a Fr\'echet space. 
By summing up  the first $i$ semi-norms for every $i\in\NN$, it turns  $C^\infty(\TT^d,\RR)$ into a  graded Fr\'echet space.

Our method of this paper shall use some  approximation properties and quantitive estimates, e.g.
the smoothing operators, the interpolation inequalities and the regularity of the composition operator.  
In particular, we need to control the norm of a function in the scale of H\"older spaces. 

Now, let us  turn to define H\"older regularities.
For our purpose, it is sufficient to consider a convex set $U=\TT\times E$ or $\TT$ or $E$, with $E\subset\RR$ an open interval, and then study the H\"older regularities of functions defined on $U$.

For $\lambda\in(0,1)$, we denote by $C^\lambda(U,\RR)$ the space of  bounded $\lambda$-H\"older functions $g:U\to \RR$ with the following norm 
\[ \|g\|_\lambda:=\max\left\{\|g(x)\|_{C^0}, \quad\sup_{0<|z-z'|\leq 1}\frac{|g(z)-g(z')|}{|z-z'|^\lambda}\right\}.\]
For integer $p\in \NN$, $C^p(U,\RR)$ denotes the space of functions with continuous derivatives up to $p$ with the following norm
\[\|g\|_p:=\max_{0\leq t\leq p} \max_{ |j|=t}\,\|\partial^j g\|_{C^0}.\]
For $\ell=p+\lambda$ with $p\in \NN$ and $\lambda\in (0,1)$,  we denote by $C^\ell(U,\RR)$ the space of functions $f:U\to\RR$  with continuous derivatives up to $p$ and H\"older continuous partial  derivatives $\partial^j f$ for all multi-indices $j$ satisfying $|j|=p$. We define its norm by
\[\|g\|_\ell:=\max\left\{\|g\|_p,~\max_{ |j|=p}\|\partial^jg\|_\lambda\right\}.
\]
Here, following  \cite{Sal-Zeh}, we have used  the restriction $0<|z-z'|\leq 1$ for the H\"older part of the norm.  
In this context, an immediate observation is that for any $f\in C^r(U,\RR)$, we have
\[\|f\|_r\geq \|f\|_s,\qquad\text{ for all~} r\geq s\geq 0.\] 
Indeed, this can be readily verified using the mean value theorem, since  the domain $U$ is convex.

In consequence, we find that the space $C^\infty(U,\RR)$ of smooth functions with the family of H\"older norms $\{\|\cdot\|_r\}_{r\geq 0}$ is a graded Fr\'echet space. 

Throughout this paper,
the $C^r$ norm of  a  vector-valued function $G\in C^\infty(U, \RR^l)$ is defined by 
	\[\|G\|_r:=\max_{1\leq i \leq l}\|g_i\|_r,\]
where  $g_i\in  C^\infty(U, \RR)$ is the $i$-th coordinate function of $G=(g_1,\cdots,g_l)$.

\section{Initial reduction}\label{section_renorm}

In this section we will show that the proof of 
Theorem \ref{MainR1} can be reduced to that of Theorem \ref{Thm_simplified}. The basic idea is simple: since  $\omega(y)$ is non-degenerate,
the  map $\cF$ can be transformed into a simplified form which is just a perturbation of the standard integrable map $U_0(x,y)=(x+y,y)$.
	
Recall that the frequency map 
$\omega(y): \RR \longrightarrow \RR$
is a smooth diffeomorphism, with its  inverse denoted by $\omega^{-1}(y)$.
Define  a smooth diffeomorphism $Q$ by
\begin{align*}
	Q: ~\TT\times \RR \longrightarrow \TT\times \RR,\quad
	(x,y) \longmapsto (x, \omega^{-1}(y)),
\end{align*}
and its inverse is
\begin{align*}
Q^{-1}: ~\TT\times \RR \longrightarrow \TT\times \RR,\quad
	(x,y) \longmapsto (x, \omega(y)).
\end{align*}
Under the change of coordinates by $Q$,  the unperturbed map $\cF_0$   can be transformed into
\begin{align*}
	U_0=Q^{-1}\circ\cF_0\circ Q & : ~ \TT\times \RR\longrightarrow \TT\times \RR\\
	 U_0(x,y)&=(x+y,y).
\end{align*} 
Meanwhile, it is easily seen that $T_\alpha$ is invariant under the conjugacy $Q$, that is
\[Q^{-1}\circ T_\alpha\circ Q =T_\alpha.\]

For the maps $\cF$ and $\cK$ considered in Theorem \ref{MainR1}, under the coordinate transformation $Q$ we obtain the corresponding conjugated maps 
\[\FF=Q^{-1}\circ\cF\circ Q :~ \TT\times \RR\longrightarrow \TT\times \RR.\]
	\[ \KK=Q^{-1}\circ\cK\circ Q : ~\TT\times \RR\longrightarrow \TT\times \RR.\]
More precisely,  $\FF=U_0+\bff$ and $\KK=T_\alpha+\bfk$ for some $\bff, \bfk\in C^\infty(\TT\times\RR,\RR^{2})$, and
 \begin{equation}\label{FF_exp}
	\FF(x,y)=(x+y+\bff_1(x,y),~ y+\bff_2(x,y)),
\end{equation}
where
 $\bff_1=f_1\circ Q$ and $\bff_2(x,y)=\omega(\omega^{-1}(y)+f_2\circ Q(x,y))-y$.  
\begin{equation}\label{KK_exp}
	\KK(x,y)=(x+\alpha+\bfk_1(x,y),~ y+\bfk_2(x,y)),
\end{equation}
where $\bfk_1=k_1\circ Q$ and $\bfk_2(x,y)=\omega(\omega^{-1}(y)+k_2\circ Q(x,y))-y$. 

It is easy to verify the following facts.
\begin{Lem}   The commutativity $\FF\circ\KK=\KK\circ\FF$ holds.  
$\FF$ satisfies the intersection property. $\KK$ is Lipschitz semi-conjugate to $R_\alpha$.
\end{Lem}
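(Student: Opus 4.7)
The lemma amounts to transferring the three hypotheses of Theorem~\ref{MainR1} through conjugation by $Q$, and since $Q(x,y)=(x,\omega^{-1}(y))$ is a smooth diffeomorphism that acts as the identity in the first coordinate and by a diffeomorphism in the second, each property should pass to $\FF$ and $\KK$ in a direct way. The plan is to verify the three claims separately.

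First, commutativity is a routine algebraic check: since $\FF = Q^{-1}\circ\cF\circ Q$ and $\KK=Q^{-1}\circ\cK\circ Q$, the middle $Q\circ Q^{-1}$ cancels and one gets
\[
\FF\circ\KK \;=\; Q^{-1}\circ\cF\circ\cK\circ Q \;=\; Q^{-1}\circ\cK\circ\cF\circ Q \;=\; \KK\circ\FF,
\]
using only $\cF\circ\cK=\cK\circ\cF$.

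Second, for the intersection property of $\FF$ I would use that $Q$ preserves the foliation by horizontal circles, and more importantly is a $C^\infty$ diffeomorphism, so the pushforward of any circle $C^1$-close to $\{y=\textup{const}\}$ is again a circle $C^1$-close to some $\{y=\textup{const}'\}$. Concretely, let $\gamma$ be a homotopically nontrivial circle $C^1$-close to $\{y=c\}$; then $Q(\gamma)$ is homotopically nontrivial and $C^1$-close to $\{y=\omega^{-1}(c)\}$, so by the intersection property of $\cF$ the image $\cF(Q(\gamma))$ meets $Q(\gamma)$. Applying $Q^{-1}$ and using the identity $\FF(\gamma)=Q^{-1}(\cF(Q(\gamma)))$, the intersection $\FF(\gamma)\cap\gamma$ is nonempty.

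Third, the semi-conjugacy of $\KK$ transfers by composing on the right with $Q$. If $W(x,y)=x+v(x,y)$ is the given Lipschitz semi-conjugacy of $\cK$ to $R_\alpha$, set $\tilde W:=W\circ Q$, so that $\tilde W(x,y)=x+v(x,\omega^{-1}(y))$. Since $Q$ is smooth and $v$ is Lipschitz, $\tilde W$ is Lipschitz (on any region where $\omega^{-1}$ is Lipschitz, which is automatic on bounded sets and is all that is needed later), and it has the required form of a map to $\TT$ with Lipschitz correction. The semi-conjugacy identity then follows from
\[
\tilde W\circ\KK \;=\; W\circ Q\circ Q^{-1}\circ\cK\circ Q \;=\; W\circ\cK\circ Q \;=\; R_\alpha\circ W\circ Q \;=\; R_\alpha\circ\tilde W.
\]

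There is no real obstacle here; the only point requiring minimal care is in the second item, to ensure that a curve $C^1$-close to a horizontal circle for $\FF$ corresponds under $Q$ to a curve $C^1$-close to a horizontal circle for $\cF$, which is immediate from the fact that $Q$ and $Q^{-1}$ have bounded $C^1$-norms on the relevant bounded strips of the cylinder.
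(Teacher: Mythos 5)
Your proposal is correct and is exactly the argument the paper intends: the paper itself only remarks that commutativity follows directly and that the intersection property and the Lipschitz semi-conjugacy are preserved under the coordinate change $Q$, and your three verifications (cancellation of $Q\circ Q^{-1}$, pushing circles forward by $Q$, and setting $\tilde W=W\circ Q$) are the straightforward elaborations of that remark. No issues.
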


In fact, the commutativity of $\FF$ and $\KK$ follows directly from that of $\cF$ and $\cK$. The intersection property and the Lipschitz semi-conjugacy property 
are both preserved under coordinate transformations.

 Therefore, by what we have shown above,  Theorem \ref{MainR1} reduces to the following theorem.
 \begin{The}\label{Thm_simplified}
 Let  $\FF, \KK \in \Diff(\TT\times\RR)$ be commuting diffeomorphisms which are induced by $\FF=U_0+\bff$ and $\KK=T_\alpha+\bfk$, where
 $\bff, \bfk\in C^\infty(\TT\times\RR,\RR^{2})$ and $\alpha\in \mathrm{DC}(\sigma,\tau)$. 
 Suppose that  
\begin{itemize}
	\item $\FF$ satisfies the intersection property.
	\item $\KK$ is Lipschitz semi-conjugate to the rigid circle rotation $R_\alpha$.
\end{itemize}
Then, there exists $\mu=\mu(\tau)>0$ such that:  
for any $\delta>0$ and  any  bounded open interval $\cD\subset \RR$,  if the perturbations 
\[\left\|\bff,~\bfk\right\|_{C^\mu(\TT\times\cD_\delta)}<\vep_0\]
for a sufficiently small  $\vep_0=\vep_0(\tau,\mathcal{I},\delta)>0$,
then $\FF$ and $\KK$ can be simultaneously $C^\infty$-conjugated to $U_0$ and $T_\alpha$ on $\TT\times\cD$, in the sense that
there exists a  $C^\infty$ diffeomorphism $H$ from $\TT\times\cD$ onto its image such that 
 \begin{align*}
 	H^{-1}\circ \FF \circ H=U_0,\qquad
 	H^{-1}\circ \KK \circ H=T_\alpha.
 \end{align*}
 \end{The}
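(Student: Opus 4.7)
My plan is to prove Theorem \ref{Thm_simplified} via a KAM iterative scheme adapted to the $\ZZ^2$-action generated by the commuting pair $(\FF,\KK)$. Write a near-identity conjugacy as $H = \mathrm{id}+h$ with $h=(h_1,h_2)$. Linearizing $\FF\circ H = H\circ U_0$ and $\KK\circ H = H\circ T_\alpha$ around the identity, I obtain the pair of linearized cohomological equations
\begin{align*}
h_1\circ U_0 - h_1 - h_2 &= \bff_1, \qquad h_2\circ U_0 - h_2 = \bff_2,\\
h_1\circ T_\alpha - h_1 &= \bfk_1, \qquad h_2\circ T_\alpha - h_2 = \bfk_2.
\end{align*}
The plan is to solve these (approximately) for a common $h$, produce a new approximate conjugate pair $(\FF_+,\KK_+)$ whose perturbations are quadratically small, and iterate.

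Passing to Fourier series in $x\in\TT$, the $T_\alpha$-equations can be inverted mode by mode using the Diophantine condition $\alpha\in\mathrm{DC}(\sigma,\tau)$, producing a tame loss of derivatives of order $\tau$ (this is the source of the constant $\mu=\mu(\tau)$). The $U_0$-equations are solved after differentiating in $y$: writing $\partial_y h_2$ as a telescoping sum along the $U_0$-orbit requires controlling resonances between $x$-frequencies; here the key point is that commutativity $\FF\circ\KK=\KK\circ\FF$ forces an approximate compatibility condition between $\bff$ and $\bfk$ (essentially $\bff\circ T_\alpha - \bff \approx \bfk\circ U_0 - DU_0\cdot \bfk$), which in turn makes the two systems of cohomological equations jointly solvable by a single $h$ up to the quadratic error. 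I would use the $T_\alpha$-equations to determine the non-zero Fourier modes of $h$ (taking advantage of the Diophantine condition, which gives much better divisors than the elliptic $U_0$-equations) and then verify, via the compatibility from commutativity, that this same $h$ solves the $U_0$-equations up to an error quadratic in $\|\bff,\bfk\|$.

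The main obstacle, as flagged in the introduction, is the elimination of the $x$-averages $[\bff_1],[\bff_2],[\bfk_1],[\bfk_2]$, none of which can be produced by a conjugacy from the Fourier inversion above. Here the three structural hypotheses enter decisively. The intersection property of $\FF$ applied to the approximate invariant circle produced at each step forces $[\bff_2]$ to be of quadratic order (it must change sign on every horizontal circle). The Lipschitz semi-conjugacy $W\circ\KK=R_\alpha\circ W$ with $W=x+v$ gives $v\circ\KK - v = -\bfk_1$, and averaging over $x$ kills the coboundary part, forcing $[\bfk_1]$ to be of quadratic order (and here a Hölder-$\lambda$ semi-conjugacy with $\lambda$ close to $1$ would suffice, as remarked). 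The commutativity condition, expanded to second order in $(\bff,\bfk)$, forces $[\bfk_2]$ to be quadratic. The remaining term $[\bff_1](y)$ is absorbed by choosing $h_2$ with a suitable non-zero $x$-mean that rescales the $y$-variable, exactly as in the non-perturbative example in Remark with $\cF(x,y)=(x+y+\varepsilon y,y)$.

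With one step of the inductive lemma in hand, I would package it into a quantitative statement (the Proposition \ref{Pro_iterate} referenced in the introduction) giving, for loss $\varrho=[\tau]+2$, estimates of the form $\|\bff_+,\bfk_+\|_{r} \lesssim \|\bff,\bfk\|_{r+\varrho}\cdot\|\bff,\bfk\|_{\mu}$ on a slightly shrunk strip $\TT\times\cD_{\delta'}$, together with tame control of the conjugacy $H=\mathrm{id}+h$. I would then run a Nash--Moser type iteration with smoothing operators, choosing geometrically decaying strip losses $\delta_n$ and smoothing parameters $N_n\to\infty$, so that the errors $\|\bff_n,\bfk_n\|_\mu$ shrink super-exponentially while all higher $C^r$ norms remain bounded; the interpolation inequalities in graded Hölder scales recalled in Section \ref{section_prelim} convert this into $C^\infty$ convergence of the composition $H_1\circ H_2\circ\cdots$ on $\TT\times\cD$ to the desired simultaneous $C^\infty$ conjugacy, provided the initial $\vep_0=\vep_0(\tau,\cD,\delta)$ is chosen small enough to start the iteration.
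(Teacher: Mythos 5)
Your proposal follows essentially the same route as the paper: solve the $\Delta_\alpha$-cohomological equation via Fourier series and the Diophantine condition, use commutativity (the compatibility $\Delta_\alpha\bff\approx\Delta_{U_0}\bfk$, i.e.\ Moser's mechanism) to show the same $\bfh$ approximately solves the $\Delta_{U_0}$-equation, kill the averages $[\bff_2]$, $[\bfk_1]$, $[\bfk_2]$ by the intersection property, the Lipschitz semi-conjugacy, and commutativity respectively, absorb $[\bff_1]$ into the $y$-component of $\bfh$, and close the scheme with smoothing operators, interpolation, and a Nash--Moser iteration on shrinking strips. This matches the paper's argument in Sections 5--7; the only stray remark (solving the $U_0$-equation by differentiating in $y$) is superseded by your own correct choice to determine $\bfh$ from the $T_\alpha$-equation.
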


We remark that in the above theorem,  $\cD_\delta:=\{y\in\RR,~ \textup{dist}(y, \cD)<\delta\}$. 
For simplicity we have used the notation
\[\|\bff,~\bfk\|_{C^\mu(\TT\times\cD_\delta)}\overset{\textup{def}}=\max\{\|\bff\|_{C^\mu(\TT\times\cD_\delta)}, ~\|\bfk\|_{C^\mu(\TT\times\cD_\delta)}\}.\]
  $C^\infty(\TT\times\RR,\RR^2)$ is the set of  functions $\phi(x,y)\in C^\infty(\RR\times\RR,\RR^2)$ that are  $1$-periodic in $x$.

The following sections will be devoted to prove Theorem \ref{Thm_simplified}.

\section{Linearized conjugacy equations and the commutativity}\label{section_Leq_Comm}

\subsection{Linearized conjugacy equations}
Let us focus on
the commuting diffeomorphisms $\FF=U_0+\bff$ and $\KK=T_\alpha+\bfk$ obtained in \eqref{FF_exp} and \eqref{KK_exp}.
In our setting, the simultaneous $C^\infty$-linearization problem amounts to  
 find a smooth near-identity conjugacy $H=\textup{id}+\bfh$, with  $\bfh(x,y)=(\bfh_1(x,y), \bfh_2(x,y))$ such that
\begin{equation*}
	\KK\circ H=H\circ T_\alpha,\qquad \FF\circ H=H\circ U_0.
\end{equation*}
Since $\KK=T_\alpha+\bfk$, 
the conjugacy equation $\KK\circ H=H\circ T_\alpha$ is reduced to 
\begin{equation}\label{htkH1}
	\bfh\circ T_\alpha-\bfh=\bfk\circ H.
\end{equation}
Simultaneously, as $\FF=U_0+\bff$, the conjugacy equation $\FF\circ H=H\circ U_0$ is reduced to
\begin{align}\label{hfUH2}
\left\{
\begin{array}{lll}
	\bfh_1\circ U_0-\bfh_1-\bfh_2&=\bff_1\circ H\\
	 \bfh_2\circ U_0-\bfh_2&=\bff_2\circ H.
\end{array}
\right.
\end{align}
There is no direct way to solve the nonlinear equations \eqref{htkH1}-\eqref{hfUH2}. 
Instead, we will use a  KAM iterative scheme to solve this nonlinear problem.
In other words, the solution is the limit of successive approximations obtained by approximating the nonlinear problem by its linear part, and solving approximately the corresponding linearized equation.

To simplify the notation, for any convex domain $E\subset \RR$
we define two linear operators on $C^\infty(\TT\times E,\RR^{2})$ as follows: for  $u(x,y)=(u_1(x,y), u_2(x,y))$,
\begin{equation}\label{lin_op_alp}
\begin{aligned}
	\Delta_{\alpha}: ~C^\infty(\TT\times E,\RR^{2})&\longrightarrow C^\infty(\TT\times E,\RR^{2})\\
 u &\longmapsto u\circ T_\alpha-u,
\end{aligned}
\end{equation}
where $T_\alpha(x,y)=(x+\alpha, y)$, and 
\begin{equation}\label{lin_op_U}
\begin{aligned}
	\DelU: ~ C^\infty(\TT\times E,\RR^{2})&\longrightarrow C^\infty(\TT\times E,\RR^{2})\\
	u= \left(\begin{array}{ll}
	u_1\\
		 u_2
	\end{array}
	\right)
	&\longmapsto 
	\left(
	\begin{array}{ll}
	u_1\circ U_0-u_1-u_2\\
	u_2\circ U_0-u_2	
	\end{array}
\right),
\end{aligned}
\end{equation}
where $U_0(x,y)=(x+y,y)$.
It is easily seen that the two linear operators commute, i.e.,
	\[\DelU\Delta_{\alpha}=\Delta_{\alpha}\DelU.\]
Now, the corresponding linearized equations of \eqref{htkH1}--\eqref{hfUH2} can be written as 
\begin{align}
\Delta_\alpha \bfh&=\bfk \label{first_coheq}\\
\DelU  \bfh&=\bff\label{second_coheq}
\end{align} 
where  $\bfh=(\bfh_1, \bfh_2)$, $\bfk=(\bfk_1,\bfk_2)$ and $\bff=(\bff_1,\bff_2)$.

The basic idea of finding a common approximate solution is as follows. 
Thanks to  the Diophantine property of $\alpha$,  one can first  obtain  a  solution $\bfh$ to  equation \eqref{first_coheq}. Then,   by exploiting the commutativity relation  we can show that $\bfh$ also solves equation \eqref{second_coheq} up to a higher order error.  This idea is inspired by Moser's commuting mechanism \cite{Moser90_commuting}.

For our purpose, we first give the following  lemma
 for the linear operator $\Delta_\alpha$. 
  It can be  proved  using Fourier analysis. We repeat the argument here for completeness. We also remark that the norm of the functions are in the scale of H\"older spaces. 
  
\begin{Lem}\label{Lem_estforcoheq}
Let $\alpha\in \mathrm{DC}(\sigma,\tau)$ and $E\subset\RR$ be a convex open set. Given $\varphi(x,y)\in C^\infty(\TT\times E,\RR)$, 
there is a unique solution $u\in C^\infty(\TT\times E,\RR)$ satisfying  $\int_{\TT} u(x,y)\, dx=0$ such that 
\begin{equation}\label{eq_Delalpha}
	\Delta_\alpha u(x,y)=\varphi(x,y)-\int_{\TT} \varphi(x,y)\, dx.
\end{equation}
Moreover,  for all real number $r\in [0,\infty)$ the solution $u$ satisfies
	\begin{align}\label{est_sol_uv}
		\left\|u\right\|_r\leq C\left\|\varphi\right\|_{r+\varrho}, \qquad\varrho=[\tau]+2,
	\end{align}
	where  the constant $C=C(\tau,\sigma)=\frac{1}{\sigma}\sum_{m\neq 0}\frac{1}{|m|^{2+[\tau]-\tau}}$, and  $[\tau]$ is the integer part of $\tau>0$.
	For $r\notin \NN$ we use the H\"older norm (see Section \ref{section_prelim}). 
\end{Lem}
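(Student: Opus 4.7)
The approach is classical small-divisor analysis via Fourier expansion in the circle variable $x$, combined with the Diophantine bound to control the divisors. The key observation is that $\Delta_\alpha$ acts diagonally in the Fourier basis $\{e^{2\pi i m x}\}_{m \in \ZZ}$ and the $y$ variable plays only the role of a parameter.

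First I would expand $\varphi(x,y) = \sum_{m \in \ZZ} \hat\varphi_m(y)\, e^{2\pi i m x}$, noting that $\hat\varphi_0(y) = \int_\TT \varphi(x,y)\, dx$ is exactly the mean to be subtracted. Since $(\Delta_\alpha u)(x,y) = \sum_m (e^{2\pi i m\alpha}-1)\hat u_m(y) e^{2\pi i m x}$, and $e^{2\pi i m\alpha} \neq 1$ for $m \neq 0$ by irrationality of $\alpha$, equation \eqref{eq_Delalpha} together with the zero-average condition $\hat u_0 \equiv 0$ uniquely forces
\begin{equation*}
u(x,y) = \sum_{m \neq 0}\frac{\hat\varphi_m(y)}{e^{2\pi i m\alpha}-1}\, e^{2\pi i m x}.
\end{equation*}
This settles uniqueness and, once convergence of the series in the relevant norm is verified, also existence.

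For the quantitative estimate \eqref{est_sol_uv} I would first treat integer $r \in \NN$. Integration by parts $s$ times in $x$ (legitimate on $\TT$) yields the standard decay
\begin{equation*}
\|\partial_y^b \hat\varphi_m\|_{C^0(E)} \leq \frac{\|\varphi\|_{s+b}}{(2\pi|m|)^s}\qquad \forall\, s \in \NN,
\end{equation*}
and combining this with the Diophantine bound $|e^{2\pi i m\alpha}-1|^{-1} \leq |m|^\tau/\sigma$ gives, for any multi-index with $a+b \leq r$ and for the choice $s = r+[\tau]+2-a$,
\begin{equation*}
\|\partial_x^a\partial_y^b u\|_{C^0} \leq \frac{1}{\sigma}\sum_{m \neq 0}\frac{|m|^{a+\tau}\|\varphi\|_{r+[\tau]+2}}{|m|^{a+[\tau]+2}} = \frac{\|\varphi\|_{r+\varrho}}{\sigma}\sum_{m \neq 0}\frac{1}{|m|^{[\tau]+2-\tau}}.
\end{equation*}
The exponent $[\tau]+2-\tau > 1$, so the sum converges and equals the claimed $C(\sigma,\tau)$.

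For non-integer $r = p+\lambda$ with $\lambda \in (0,1)$, I would estimate the H\"older seminorm of $\partial^j u$, $|j|=p$, directly from the Fourier representation. Writing the difference $\partial^j u(z)-\partial^j u(z')$ as a series in $m$ and using the elementary bound $|g_m(z)-g_m(z')| \leq \min(2\|g_m\|_{C^0},\, C|m|\|g_m\|_{C^1}|z-z'|)$ for each summand $g_m$, then interpolating via $\min(A, B|z-z'|) \leq A^{1-\lambda}B^\lambda |z-z'|^\lambda$, reduces the H\"older seminorm to a series of the same form as above but with exponent $a+\tau+\lambda$ in the numerator and $a+[\tau]+2+\lambda$ in the denominator (applying Step 2 with $s = p+[\tau]+2$ and the extra $C^\lambda$ control in $x$); the $\lambda$'s cancel and the same convergent tail $\sum |m|^{-([\tau]+2-\tau)}$ appears. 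The main (though routine) obstacle is precisely this Step 4: bookkeeping the H\"older interpolation so that the loss stays exactly at $\varrho = [\tau]+2$ and the constant matches the stated $C(\sigma,\tau)$. Smoothness of $u$ then follows from the fact that each H\"older seminorm of $u$ is dominated by a corresponding seminorm of $\varphi \in C^\infty$.
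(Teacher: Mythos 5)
Your proposal is correct in its skeleton and coincides with the paper's proof in all the structural steps: diagonalizing $\Delta_\alpha$ in the Fourier basis in $x$ (with $y$ as a parameter), reading off uniqueness from $e^{2\pi i m\alpha}\neq 1$ and the zero-average normalization, and using integration by parts plus the Diophantine lower bound with $q=[\tau]+2$ to sum the series for integer $r$. Where you genuinely diverge is the treatment of the H\"older seminorm for $r=p+\lambda$. You interpolate the pointwise bound $\min(2\|g_m\|_{0},\|Dg_m\|_{0}|z-z'|)\leq (2\|g_m\|_0)^{1-\lambda}\|Dg_m\|_0^{\lambda}|z-z'|^{\lambda}$ on each mode. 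Be aware that this only closes with loss exactly $\varrho$ if you really use the fractional decay you allude to (``the extra $C^\lambda$ control in $x$''), i.e.\ $\|\widehat\varphi_m e^{2\pi imx}\|_{C^0}\lesssim \|\varphi\|_{p+q+\lambda}|m|^{-(q+\lambda)}$ via the half-period shift trick, paired with $\|Dg_m\|_0\lesssim\|\varphi\|_{p+q+\lambda}|m|^{-(q-1+\lambda)}$; then both the exponents of $|m|$ and the norms of $\varphi$ recombine to $|m|^{-q}\|\varphi\|_{p+q+\lambda}$. If instead one interpolates the naive integer-order bounds, one ends up with $\|\varphi\|_{p+q}^{1-\lambda}\|\varphi\|_{p+q+1}^{\lambda}$, which is \emph{not} dominated by $C\|\varphi\|_{p+q+\lambda}$ (the convexity inequality runs in the opposite direction), and the stated loss $\varrho$ would degrade to $\varrho+1$. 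The paper sidesteps this entirely: it writes $\widehat\varphi_m(y)e^{2\pi imx}=\int_\TT\varphi(\theta+x,y)e^{-2\pi im\theta}\,d\theta$, integrates by parts $q$ times, and bounds the difference $\partial^JG(z_1)-\partial^JG(z_2)$ directly by the $\lambda$-H\"older seminorm of $\partial^J\partial_x^q\varphi$, obtaining $\|\widehat\varphi_m e^{2\pi imx}\|_{p+\lambda}\leq\|\varphi\|_{p+q+\lambda}|m|^{-q}$ with no interpolation and no fractional Fourier decay. That route is shorter and makes the constant $C(\tau,\sigma)$ transparent; yours is workable but puts all the weight on the bookkeeping you yourself flag as the main obstacle.
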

\begin{Rem}
	The constant $C$ can be independent of $\tau$ if one choose $\varrho=\tau+2$ instead of $[\tau]+2$. Sometimes, the linear equation of the form \eqref{eq_Delalpha} is also called a cohomological equation.
\end{Rem}

\begin{proof} 
Using Fourier series  equation \eqref{eq_Delalpha} 
becomes
\begin{align*}
	\sum_{m\in\ZZ\setminus\{0\}}\left(e^{i2\pi m\alpha}-1\right) \widehat{u}_m(y)\, e^{i2\pi mx}=\sum_{m\in\ZZ\setminus\{0\}} \widehat{\varphi}_m(y)\, e^{i2\pi mx}
\end{align*}
where the Fourier coefficients  $\widehat{\varphi}_m(y)=\int_{\TT} \varphi(\theta,y)e^{-i2\pi m\theta}\,d\theta$. 
Then we formally have a solution
\begin{equation*}
	 u(x,y)=\sum_{m\in\ZZ\setminus\{0\}}\frac{\widehat{\varphi}_m(y)}{e^{i2\pi m\alpha}-1}e^{i2\pi mx}.
\end{equation*}
Observe that for each $m$,
\begin{equation}\label{hgdfl}
	 \widehat{\varphi}_m(y)e^{i2\pi mx}=\int_{\TT}\varphi(\theta,y)e^{-i2\pi m(\theta-x)}\,d\theta=\int_{\TT} \varphi(\theta+x,y)e^{-i2\pi m\theta}\,d\theta.
\end{equation}
Using integration by parts, we thus obtain
 \begin{equation}\label{Four_coeffdecay}
  \left\|\widehat{\varphi}_m(y) e^{i2\pi mx}\right\|_p\leq \frac{1}{(2\pi)^q} \frac{\|\varphi\|_{p+q}}{|m|^q}\leq  \frac{\|\varphi\|_{p+q}}{|m|^q},\qquad\text{for all ~}	p, q\in\NN.
 \end{equation}

 Meanwhile,  for each $m$ the following H\"older norm estimate holds
 \begin{equation}\label{Four_coeffdecay_1}
  \left\|\widehat{\varphi}_m(y) e^{i2\pi mx}\right\|_{p+\lambda}\leq  \frac{\|\varphi\|_{p+q+\lambda}}{|m|^q},\qquad\text{for all ~}	p, q\in\NN,\quad \lambda\in(0,1).
 \end{equation}
To verify this estimate, we define $G(x,y)=\widehat{\varphi}_m(y)\,e^{i2\pi mx}$ for simplicity.  Recall that
 \[\left\|G\right\|_{p+\lambda}=\max\left\{\left\|G\right\|_{p},~\max_{|J|=p}\left\|\partial^J G\right\|_{\lambda}\right\}\]
 where the multi-index $J=(J_1, J_2)\in \NN^{2}$ and $\partial^J=\partial ^{J_1}_x\partial^{J_2}_y$. Since  by  \eqref{Four_coeffdecay} 
 \[ \|G\|_p\leq\frac{\|\varphi\|_{p+q}}{|m|^q} \leq  \frac{\|\varphi\|_{p+q+\lambda}}{|m|^q},\] it remains to check the H\"older norm $\left\|\partial^J G\right\|_{\lambda}$ for every multi-index $J$ satisfying $|J|=p$. In fact,
 by \eqref{hgdfl},  for any two points $(x_1,y_1)$ and $(x_2,y_2)$,
\begin{align}
	&\left|\partial^J G(x_1,y_1)-\partial^J G(x_2,y_2)\right|\nonumber\\
	= &\left|\int_{\TT}\left(\partial^J \varphi(\theta+x_1,y_1)-\partial^J \varphi(\theta+x_2,y_2)\right)e^{-i2\pi m\theta}\,d\theta\right|\nonumber\\
	\leq &\frac{1}{|m|^q}\left|\int_{\TT}\left(\partial^{J} \partial_x^q\varphi(\theta+x_1,y_1)-\partial^{J}\partial_x^q \varphi(\theta+x_2,y_2)\right)\,e^{-i2\pi m\theta}\,d\theta\right|\nonumber\\
	\leq &\frac{1}{|m|^q}\,\sup_{\theta}\,\left|\partial^{J}\partial_x^q \varphi(\theta+x_1,y_1)-\partial^{J}\partial_x^q \varphi(\theta+x_2,y_2)\right|
	\label{holder_intbypart}
\end{align}
Here, we have used integration by parts for the third line. As $|J|=p$,
we   infer from  \eqref{holder_intbypart} that
\begin{align*}
	\left\|\partial^J G\right\|_{\lambda}=&\sup\limits_{0<\|(x_1,y_1)-(x_2,y_2)\|\leq 1} \frac{\left|\partial^J G(x_1,y_1)-\partial^J G(x_2,y_2)\right|}{\|(x_1,y_1)-(x_2, y_2)\|^\lambda}\\
	\leq & \frac{1}{|m|^q}\cdot \sup\limits_{0<\|(x_1,y_1)-(x_2,y_2)\|\leq 1} \frac{\|\varphi\|_{p+q+\lambda}\cdot \|(x_1,y_1)-(x_2, y_2)\|^\lambda}{\|(x_1,y_1)-(x_2, y_2)\|^\lambda}\\
	\leq &\frac{1}{|m|^q}\|\varphi\|_{p+q+\lambda}.
\end{align*}
This thus verifies the desired result \eqref{Four_coeffdecay_1}.

Next, we will estimate the $C^r$ norm of the solution $u$ for any $r\geq 0$.
By \eqref{Four_coeffdecay}--\eqref{Four_coeffdecay_1}, for any real $r\in\RR^+$ and $q\in\NN$, 
 \begin{align*}
\left\| u\right\|_r\leq   \sum_{m\in\ZZ\setminus\{0\}}\frac{\left\|\widehat{\varphi}_m(y) e^{i2\pi mx}\right\|_r}{|e^{i2\pi m\alpha}-1|}
\leq  \sum_{m\in\ZZ\setminus\{0\}}\frac{\|\varphi\|_{r+q}}{ |m|^{q}\,|e^{i2\pi m\alpha}-1|}
\leq  \frac{1}{\sigma}\sum_{m\in\ZZ\setminus\{0\}}\frac{\|\varphi\|_{r+q}}{ |m|^{q-\tau}} ,
\end{align*}
where for the last inequality we have used the Diophantine condition  $\alpha\in \mathrm{DC}(\sigma,\tau)$. Note that the series on the right hand side is convergent if and only if  the integer $q$ satisfies $q-\tau>1$. Hence, we can choose $q=[\tau]+2$, then
\begin{align*}
	\left\| u\right\|_r\leq  \frac{1}{\sigma}\sum_{m\in\ZZ\setminus\{0\}}\frac{\|\varphi\|_{r+[\tau]+2}}{ |m|^{2+[\tau]-\tau}} \leq C(\tau,\sigma)\cdot \|\varphi\|_{r+[\tau]+2},
\end{align*}
where the constant $C(\tau,\sigma)=\frac{1}{\sigma}\sum_{m\neq 0}\frac{1}{|m|^{2+[\tau]-\tau}}<\infty$ depends on $\tau$ and $\sigma$.
This therefore proves estimate \eqref{est_sol_uv} for any real $r\geq 0$.
This finishes the proof.
\end{proof}

This lemma tells us that given a differentiable function $\varphi$, the  cohomological equation $\Delta_\alpha u=\varphi-[\varphi]$ has a solution, which  in general is of lower regularity than $\varphi$. However, the loss of regularity  can be controlled by the Diophantine exponent $\tau$. In particular,  the solution $u\in C^\infty$ if $\varphi\in C^\infty$.

\subsection{The commutativity property}

Now we investigate the commutativity assumption.

Suppose that $\FF=U_0+\bff$ commutes with $\KK=T_\alpha+\bfk$  on $\TT\times E$ with $E\subset \RR$ being convex and open. Then
the commutation relation $\FF\circ \KK=\KK\circ\FF$  implies
\begin{equation}\label{come1}
\begin{aligned}
		\bff_1\circ \KK-\bff_1&=\bfk_1\circ\FF-\bfk_1-\bfk_2\\
		\bff_2\circ \KK-\bff_2&=\bfk_2\circ\FF-\bfk_2
\end{aligned}
\end{equation}
on $\TT\times E$.

In view of the linear operators $\Delta_\alpha$ and $\DelU$ defined in \eqref{lin_op_alp}--\eqref{lin_op_U},
we  also introduce  a new linear operator 
\[\cR: C^\infty(\TT\times E,\RR^{2})\times C^\infty(\TT\times E,\RR^{2})
\longrightarrow C^\infty(\TT\times E,\RR^{2})\] 
given by
\begin{equation}\label{crfkform}
\cR(f, g)\overset{\Delta}=\DelU \, g-\Delta_\alpha \, f.
\end{equation}

In what follows, 
for a smooth function $\psi(x,y)$ we use $[\psi](y)$ to denote the average (or mean value) of $\psi$ over $\TT$, that is
 \[[\psi](y)=\int_{\TT} \psi(x,y)\, dx.\]
 In fact,
 this is exactly the $0$-th Fourier coefficient $\widehat\psi_0(y)$ of $\psi(x,y)=\sum\widehat\psi_m(y) e^{i2\pi mx}$.

  For our maps $\FF=U_0+\bff$ and $\KK=T_\alpha+\bfk$, 
 the following result states that  $\cR(\bff, \bfk)$ and the average $[\bfk_2]$  are both of higher order with respect to the size of the perturbations $\bff$ and $\bfk$. This is essentially due to the commutativity property. 
\begin{Lem}\label{Lem_bracket}
If $\FF=U_0+\bff$ commutes with  $\KK=T_\alpha+\bfk$, then the 
 following  estimates hold:
	\begin{align}
	\|\cR(\bff, \bfk)\|_r\leq & C_r \big(\|\bff\|_{r+1}\,\|\bfk\|_r+\|\bfk\|_{r+1}\,\|\bff\|_r\big),\qquad\textup{for any}~r\geq 0\label{cROfk}\\
			\|~[\bfk_2]~\|_0\leq &\|\bff\|_1\,\|\bfk\|_0+\|\bfk\|_1\,\|\bff\|_0\, , \label{k2ave0}	
\end{align}
where $[\bfk_2](y)$ is the average (over $\TT$) of the second component of $\bfk=(\bfk_1,\bfk_2)$.  	
\end{Lem}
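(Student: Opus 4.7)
The plan is to exploit the commutation relations \eqref{come1} algebraically so that both components of $\cR(\bff,\bfk)$ get re-expressed as sums of compositional differences $g\circ\phi - g\circ\psi$ in which $\phi-\psi$ is \emph{itself} one of the small perturbations $\bff$ or $\bfk$. Once that telescoping is in place, the two estimates reduce to standard composition / mean-value bounds in the H\"older scale.

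For \eqref{cROfk}, I would insert and subtract the compositions with $\FF$ and $\KK$. For the first coordinate, the identity $(\bfk_1\circ U_0-\bfk_1-\bfk_2)-(\bff_1\circ T_\alpha-\bff_1)$ can be rewritten as $(\bfk_1\circ U_0-\bfk_1\circ \FF)+(\bfk_1\circ \FF-\bfk_1-\bfk_2)-(\bff_1\circ T_\alpha-\bff_1)$, and the first commutativity equation in \eqref{come1} collapses the middle bracket with the last one into $\bff_1\circ\KK-\bff_1\circ T_\alpha$, giving
\begin{equation*}
\cR(\bff,\bfk)_1=\bigl(\bfk_1\circ U_0-\bfk_1\circ \FF\bigr)+\bigl(\bff_1\circ \KK-\bff_1\circ T_\alpha\bigr).
\end{equation*}
The same trick applied to the second commutation equation yields
\begin{equation*}
\cR(\bff,\bfk)_2=\bigl(\bfk_2\circ U_0-\bfk_2\circ \FF\bigr)+\bigl(\bff_2\circ \KK-\bff_2\circ T_\alpha\bigr).
\end{equation*}
Since $\FF-U_0=\bff$ and $\KK-T_\alpha=\bfk$, each bracket is of the form $g\circ\phi-g\circ\psi$ with $\phi-\psi$ equal to $\bff$ or $\bfk$. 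Writing $g\circ\phi-g\circ\psi=\int_0^1 \nabla g(\psi+t(\phi-\psi))\cdot(\phi-\psi)\,dt$ and applying the standard H\"older composition inequality $\|g\circ\Phi\|_r\leq C_r\|g\|_r(1+\|\Phi-\textup{id}\|_r)$ together with the product rule, one obtains the bilinear bound $\|g\circ\phi-g\circ\psi\|_r\leq C_r(\|g\|_{r+1}\|\phi-\psi\|_0+\|g\|_1\|\phi-\psi\|_r)$, which, summed over the two brackets, gives \eqref{cROfk}. The smallness of $\bff,\bfk$ in $C^1$ (controlled by $\vep_0$) guarantees that $\FF,\KK$ stay inside the domain of validity for the composition estimate.

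For \eqref{k2ave0}, I would isolate $\bfk_2$ from the first line of \eqref{come1}, writing $\bfk_2=\bfk_1\circ\FF-\bfk_1-(\bff_1\circ\KK-\bff_1)$, and then average in $x\in\TT$. The key observation is that $U_0$ and $T_\alpha$ both act on the $x$-coordinate by a translation depending only on $y$, so $[\bfk_1\circ U_0]=[\bfk_1]$ and $[\bff_1\circ T_\alpha]=[\bff_1]$ by Lebesgue-invariance on $\TT$. Substituting these identities gives
\begin{equation*}
[\bfk_2](y)=\bigl([\bfk_1\circ\FF]-[\bfk_1\circ U_0]\bigr)-\bigl([\bff_1\circ \KK]-[\bff_1\circ T_\alpha]\bigr),
\end{equation*}
and each difference is bounded at level $r=0$ by the same mean-value argument as above, producing \eqref{k2ave0}.

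The main technical obstacle is purely the quantitative H\"older composition estimate needed in the second paragraph; it requires a careful application of Fa\`a di Bruno / Moser-type composition inequalities in the H\"older scale, and must be invoked uniformly on the bounded strip $\TT\times \cD_\delta$. Once that tool is taken as standard, the rest of the proof is bookkeeping: all the algebra is forced by the commutation relations, and the inserted/subtracted intermediate compositions are designed precisely so that every surviving term is a product of a norm of $\bff$ with a norm of $\bfk$.
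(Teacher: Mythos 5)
Your proposal is correct and follows essentially the same route as the paper: your componentwise insert-and-subtract manipulation reproduces exactly the paper's identity $\cR(\bff,\bfk)=\bff\circ\KK-\bff\circ T_\alpha-\bfk\circ\FF+\bfk\circ U_0$, you bound it by the same mean-value/integral representation (deferring the $C^r$ composition estimates to standard references, as the paper does), and your averaging argument for $[\bfk_2]$, using that $[\bfk_1\circ U_0]=[\bfk_1]$ and $[\bff_1\circ T_\alpha]=[\bff_1]$, is precisely the paper's computation of $[\DelU\bfk]$ and $[\Delta_\alpha\bff]$. No gaps.
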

\begin{proof}
The commutation relation gives \eqref{come1}, which can be rewritten as
\begin{equation}\label{gfhsb} 
	\DelU \bfk-\Delta_\alpha \bff=\bff\circ\KK-\bff\circ T_\alpha-\bfk\circ\FF+\bfk\circ U_0
\end{equation}
which means that
\begin{align}
	\cR(\bff,\bfk)=&\bff\circ\KK-\bff\circ T_\alpha-\bfk\circ\FF+\bfk\circ U_0
	= \int_0^1 D\bff(T_\alpha+t\bfk)\,\bfk -D\bfk(U_0+t\bff)\,\bff\, dt . \label{dgdfintgral}
\end{align}
Then,
\begin{align*}
	\|	\cR(\bff,\bfk)\|_0 &\leq \|D\bff\|_0\,\|\bfk\|_0+\|D\bfk\|_0\,\|\bff\|_0\leq \|\bff\|_1\,\|\bfk\|_0+\|\bfk\|_1\,\|\bff\|_0.
\end{align*} 
This verifies \eqref{cROfk} for $r=0$. 
Based on \eqref{dgdfintgral}, the $C^r$, $r\geq 1$, norm estimates can be proved similarly, see  for example \cite[Proposition A.2]{Damjanovic_Fayad} or \cite[Appendix II]{Lazutkin_KAM1993}.

Now, it remains to prove inequality \eqref{k2ave0}. Indeed, 
	taking the average over $\TT$ on both sides of  \eqref{gfhsb} we get
	\begin{equation}\label{eq_dntiag}
		[\DelU \bfk]-[\Delta_\alpha \bff]=\int_\TT \bff_1\circ\KK(x,y)-\bff_1\circ T_\alpha(x,y)-\bfk_1\circ\FF(x,y)+\bfk_1\circ U_0(x,y)\, dx.
	\end{equation}	
Here, by  \eqref{lin_op_U} it follows that
\begin{align*}
	[\DelU \bfk]=\left(\begin{array}{ll}
	\int_{\TT}\bfk_1(x+y,y)\, dx-\int_{\TT}\bfk_1(x,y)\, dx-\int_{\TT}\bfk_2(x,y)\, dx\\
	\int_{\TT}\bfk_2(x+y,y)\, dx-\int_{\TT}\bfk_2(x,y)\, dx
\end{array}
\right)=\left(\begin{array}{ll}
	-[\bfk_2]\\
	0
\end{array}
\right).
\end{align*}
Similarly, we can show that $[\Delta_\alpha \bff]=(0,0)$. 
Thus, \eqref{eq_dntiag}  implies that
\[-[\bfk_2](y)= \int_{\TT}\bff_1\circ\KK(x,y)-\bff_1\circ T_\alpha(x,y)-\bfk_1\circ\FF(x,y)+\bfk_1\circ U_0(x,y)\,dx,\]
which yields 
\[\|[\bfk_2]\|_0\leq \|D\bff_1\|_0\|\,\bfk\|_0+\|D\bfk_1\|_0\,\|\bff\|_0\leq\|\bff\|_1\,\|\bfk\|_0+\|\bfk\|_1\,\|\bff\|_0.\]
This finishes the proof.
\end{proof}
 
We end this section by mentioning an interesting result  
\cite{Trujillo_2021} which reveals some connection between the commutativity and the KAM set for the analytic systems. It shows that for two nearly integrable and  exact symplectic $C^\omega$ maps,  if the image of the KAM curves of the two maps intersect on a $C^\infty$-uniqueness set, then the two maps  commute.

\subsection{Smoothing operators}
As we can see from estimate \eqref{est_sol_uv} in Lemma \ref{Lem_estforcoheq}, the $C^r$ norm of the solution $u$ can be estimated by the $C^{r+\varrho}$ norm of $\varphi$, with a fixed loss of regularity $\varrho=[\tau]+2$.
For our KAM iterative scheme in the following sections,  
we shall choose an appropriate smoothing operator to compensate for  this fixed loss of regularity at each iterative step. 
By using interpolation inequalities, one can recover good behavior of some intermediate norms. 
Then the error introduced by this smoothing operator would not destroy the rapid convergence of the iteration (The convergence is not quadratic, but it is still  faster than exponential). This idea comes from the Nash-Moser  technique.

The following  approximation result is well known. We refer to \cite{Moser_rapid66, Zeh_generalized1,Sal-Zeh} for the proof and more details.
\begin{Lem}\label{Lem_trun}
Let $E\subset \RR$ be  open and convex. 
There exists a family of linear smoothing operators $\{\rS_N\}_{N\in\RR^+}$ from $C^\infty(\TT\times E,\RR)$ into itself, such that for every $\psi\in  C^\infty(\TT\times E,\RR)$, one has $\lim_{N\to\infty}\lrn{\psi-\rS_N \psi}_0=0$, and 
\begin{align}\label{trun_est0}
\lrn{\rS_N \psi}_l&\leq C_{s,l} N^{l-s}\lrn{\psi}_s\qquad \text{for~}  l\geq s,
\end{align}
and for the linear operator $\rR_N\overset{\textup{def}}=\textup{id}-\rS_N$, it satisfies 
\begin{align} \label{trun_est1}
		\lrn{ \rR_N  \psi}_s&\leq C_{s,l} \frac{\lrn{\psi}_l}{N^{l-s}} \qquad \text{for~}  l\geq s.
\end{align}
Here,  $C_{s,l}>0$ are  constants depending on $s$ and $l$. 
\end{Lem}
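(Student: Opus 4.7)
The plan is to construct $\rS_N$ via convolution with a rescaled mollifier, using an extension operator to pass from $\TT\times E$ to $\TT\times\RR$. Fix once and for all a Schwartz function $\chi\in\mathcal S(\RR^2)$ whose Fourier transform $\widehat{\chi}$ equals $1$ on a neighborhood of the origin and is compactly supported. Then automatically $\int_{\RR^2}\chi=1$ and $\int_{\RR^2}z^\beta\chi(z)\,dz=0$ for every nonzero multi-index $\beta$. Set $\chi_N(z)=N^2\chi(Nz)$. For $\psi\in C^\infty(\TT\times E,\RR)$, first extend $\psi$ to a function $\widetilde\psi\in C^\infty(\TT\times\RR,\RR)$ that is $1$-periodic in $x$ and satisfies $\|\widetilde\psi\|_r\leq C_r\|\psi\|_r$ for every $r\geq 0$ (e.g.\ via a Seeley/Stein-type extension across the two endpoints of $E$; the bound on $\TT\times E$ is all that matters). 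Define
\[
(\rS_N\psi)(z)\;=\;\int_{\RR^2}\chi_N(z-w)\,\widetilde\psi(w)\,dw,\qquad z\in\TT\times E,
\]
and $\rR_N=\mathrm{id}-\rS_N$. Since $\rS_N$ is convolution with an approximate identity on continuous functions, $\|\psi-\rS_N\psi\|_0\to 0$ as $N\to\infty$.

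For the direct bound \eqref{trun_est0} with integer indices, differentiate under the integral sign and transfer derivatives to the kernel: for any multi-index $J$ with $|J|=l$, write $\partial^J\rS_N\psi=(\partial^{J-K}\chi_N)\ast(\partial^K\widetilde\psi)$ for any $K\leq J$. Choosing $|K|=s$ and using $\|\partial^{J-K}\chi_N\|_{L^1}\leq C\,N^{l-s}$ together with Young's inequality gives $\|\rS_N\psi\|_l\leq CN^{l-s}\|\psi\|_s$. For H\"older exponents, apply the same argument to difference quotients $\Delta_h\partial^J\widetilde\psi/|h|^\lambda$; the smoothing commutes with translations, so the H\"older seminorm is controlled by the same constant times $\|\psi\|_s$. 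Interpolation between consecutive integers then delivers the estimate for all real $l\geq s\geq 0$.

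For the approximation bound \eqref{trun_est1}, Taylor expand $\widetilde\psi(w)$ around $z$ to order $\lfloor l\rfloor$ and use the vanishing moments of $\chi$ to kill all polynomial contributions:
\[
\rR_N\psi(z)\;=\;\int_{\RR^2}\chi_N(z-w)\Bigl(\widetilde\psi(w)-\sum_{|\beta|\leq\lfloor l\rfloor}\tfrac{\partial^\beta\widetilde\psi(z)}{\beta!}(w-z)^\beta\Bigr)\,dw.
\]
The bracketed remainder is bounded by $C|w-z|^l\,\|\psi\|_l$ (with H\"older norm of the top derivatives supplying the fractional part), and the substitution $u=N(z-w)$ produces the factor $N^{-l}$, whence $\|\rR_N\psi\|_0\leq C\,N^{-l}\|\psi\|_l$. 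The analogous argument with $\partial^J\rR_N\psi$ ($|J|=s$) and difference quotients yields \eqref{trun_est1} for all real $l\geq s\geq 0$.

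The main obstacle, and the reason these classical proofs require care, is the simultaneous handling of all pairs $(s,l)$ with non-integer H\"older exponents together with the boundary of $E$. The vanishing-moment issue disappears entirely by the choice $\widehat\chi\equiv 1$ near $0$, so every moment condition holds. What remains is to check that the extension operator preserves H\"older norms uniformly in $r$; this is exactly what the Seeley/Stein construction provides, at the cost of constants $C_{s,l}$ that are allowed to depend on $s$ and $l$, as stated. With these ingredients in place, the two estimates \eqref{trun_est0}--\eqref{trun_est1} follow uniformly over $\TT\times E$.
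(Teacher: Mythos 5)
Your construction is correct and is exactly the one the paper has in mind: the paper does not prove this lemma but cites Moser, Zehnder, and Salamon--Zehnder, and its Remark 5.3 states that the $\rS_N$ are obtained by convolution with rapidly decaying kernels, which is precisely your mollifier with $\widehat\chi\equiv 1$ near the origin (guaranteeing the vanishing moments needed for \eqref{trun_est1}), combined with a bounded extension across $\partial E$. The only points left slightly implicit --- the Taylor-remainder bound for $|w-z|>1$ via the rapid decay of $\chi$, and the fractional-exponent cases handled by the usual $\min(2\|g\|_0,\|Dg\|_0|h|)$ interpolation of difference quotients --- are standard and do not affect the validity of the argument.
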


\begin{Rem}
	 In fact,  the smoothing operators $\rS_N$ are constructed by convoluting with appropriate kernels  
  decaying  rather fast at infinity. So, if $\psi$ is periodic in some variables then so are the approximating functions $\rS_N\psi$ in the same variables. Moreover, by the definition of convolution,  it is not difficult to check that
  $[\rS_N\psi](y)=\rS_N[\psi](y)$.
\end{Rem}

However,  the operators $\rS_N$ given in Lemma \ref{Lem_trun} may not preserve the averages, i.e., $[\rS_N\psi](y)\neq [\psi](y)$ and $\rS_N[\psi](y)\neq [\psi](y)$ in general. 

We also note that for the functions defined on $\TT\times E$,   the  Fourier truncation operators $S_N\psi(x,y)$ $=$ $\sum_{|m|\leq N} \widehat{\psi}_m(y) e^{i2\pi mx}$ are not smoothing operators. In fact, for Fourier truncation operators, although inequality \eqref{trun_est1} is still true, 
 inequality \eqref{trun_est0} does not hold for the partial derivatives of $\psi$ with respect to $y$
 (it holds only  for the partial derivatives of $\psi$ with respect to $x$).

As pointed out in \cite{Zeh_generalized1}, one important consequence of the existence of smoothing operators is the  interpolation inequalities (Hadamard convexity inequalities), which will be very useful to us later on.

\begin{Lem}  \cite{Zeh_generalized1}
\label{cor_intpest}
	Let $g\in C^\infty(\TT\times E,\RR)$ with $E\subset \RR$ convex and open. Then, for all  $s\leq m\leq l$, $m=(1-\lambda) s+\lambda l$ with $\lambda\in[0,1]$,
	\[\lrn{g}_m\leq C_{\lambda,s,l}\lrn{g}_s^{1-\lambda}	\lrn{g}_l^{\lambda},\] 
 where the constants $C_{\lambda,l,s}>0$ depend only on $l, s$ and $\lambda$.
\end{Lem}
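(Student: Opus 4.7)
The plan is to interpolate via the smoothing operators $\rS_N$ from Lemma~\ref{Lem_trun} with an optimal choice of the scale parameter $N$. For any $N\in\RR^+$ I would decompose $g=\rS_N g+\rR_N g$ and bound
\[\lrn{g}_m\leq \lrn{\rS_N g}_m+\lrn{\rR_N g}_m.\]
Since $s\leq m\leq l$, the estimate \eqref{trun_est0} with indices $(s,m)$ gives $\lrn{\rS_N g}_m\leq C_{s,m}N^{m-s}\lrn{g}_s$, while \eqref{trun_est1} with indices $(m,l)$ gives $\lrn{\rR_N g}_m\leq C_{m,l}N^{-(l-m)}\lrn{g}_l$. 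Combining the two,
\[\lrn{g}_m\leq C\bigl(N^{m-s}\lrn{g}_s+N^{-(l-m)}\lrn{g}_l\bigr)\]
uniformly in $N>0$, with $C=\max\{C_{s,m},C_{m,l}\}$.

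The second step is to optimize in $N$. If $\lrn{g}_s,\lrn{g}_l>0$, the two terms balance when $N^{l-s}=\lrn{g}_l/\lrn{g}_s$. Since $\lambda=(m-s)/(l-s)$, this choice gives $N^{m-s}=(\lrn{g}_l/\lrn{g}_s)^{\lambda}$, so both summands equal $\lrn{g}_s^{1-\lambda}\lrn{g}_l^{\lambda}$ and the asserted inequality follows with $C_{\lambda,s,l}=2C$. The endpoint cases $\lambda\in\{0,1\}$ are tautological, and the degenerate cases where $\lrn{g}_s$ or $\lrn{g}_l$ vanishes follow by letting $N\to\infty$ or $N\to 0$, respectively, in the uniform estimate.

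I do not expect any real obstacle: this is the standard Hadamard-convexity (three-lines) argument in the form it takes once one has a good family of smoothing operators. All the analytic content has already been packaged into Lemma~\ref{Lem_trun}, namely that $\rS_N g$ and $\rR_N g$ are simultaneously controlled above and below index $m$ on the \emph{same} convex domain $\TT\times E$. The only technical point worth recording is that the parameter $N$ is allowed to range over $\RR^+$ and not merely $\NN$, which is exactly what makes the balanced choice above admissible and the constant in the final bound depend only on $s$, $l$ and $\lambda$.
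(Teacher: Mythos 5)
Your proposal is correct and matches the paper's own argument: the paper likewise decomposes $g=\rS_N g+\rR_N g$, applies estimates \eqref{trun_est0} and \eqref{trun_est1} from Lemma \ref{Lem_trun}, and chooses $N^{l-s}=\lrn{g}_l/\lrn{g}_s$ to balance the two terms. Your additional remarks on the endpoint and degenerate cases are fine but not needed beyond what the paper records.
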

In fact, as $s\leq m\leq l$, we choose $N\in\RR^+$ satisfying $N^{l-s}=\frac{ \|g\|_l }{ \|g\|_s }$, and then invoke Lemma \ref{Lem_trun} to obtain that
\begin{align*}
	\lrn{g}_m\leq \|\rS_N g\|_m+\|\rR_Ng\|_m\leq & C_{s,m}  N^{m-s}\|g\|_s +C_{m,l} N^{m-l}\|g\|_l\\
	= & (C_{s,m}+C_{m,l}) \|g\|^{\frac{l-m}{l-s}}_s \|g\|_l^\frac{m-s}{l-s}.
	\end{align*}

We also refer to \cite{dlL_Obaya_1999} for a proof done by an elementary method, and extend even to H\"older spaces of functions defined in a Banach space \cite{dlL_tutorial_1999}.

\section{Inductive lemma and the error estimates}\label{section_Induct_lem}

The goal of this section is to prove Proposition \ref{Pro_iterate},
which will be the main ingredient in the proof of Theorem \ref{Thm_simplified}.
 It allows us to  obtain smaller errors  after each iteration, which thus ensures the convergence of our KAM iteration scheme, see Section \ref{section_Mproof}.

Let  $\alpha\in \mathrm{DC}(\sigma,\tau)$,  we  recall the constant 
\[\varrho=[\tau]+2\]
obtained in Lemma \ref{Lem_estforcoheq}.
Then the following result holds.
\begin{Pro}\label{Pro_iterate}
Let
 $\FF=U_0+\bff$ and $\KK=T_\alpha+\bfk$ be commuting $C^\infty$ diffeomorphisms, where  $\FF$ has the intersection property.  
Let $\delta\in(0, \frac{1}{2}]$ and $\cD\subset\RR$ be a bounded open interval, we write $\|\bff,~\bfk\|_r=\|\bff,~\bfk\|_{C^r(\TT\times\cD_\delta)}$. Suppose that $\KK$ is semi-conjugate to $R_\alpha$ via a Lipschitz semi-conjugacy of the form $W(x,y)=x+v(x,y)$, where  $v\in \textup{Lip}(\TT\times\cD_\delta,\RR)$ satisfies  
 $|v(z)-v(z')|\leq \mathfrak L\cdot \textup{dist}(z, z')$ for 
some $\mathfrak L>1$.

Then, for $N>1$, there exists  $\bfh \in C^\infty(\TT\times\cD_\delta,\RR^{2})$, see formula \eqref{def_solh}, satisfying
\begin{equation}\label{hhhhnorm}
	\|\bfh\|_r\leq  C_{r',r,\varrho}\, N^{r-r'+\varrho}\| \bff,~\bfk\|_{r'},\qquad \text{for~}r\geq r'\geq 0.
\end{equation}
Denote $\theta=\|\bfh\|_1$, $\theta'=\|\bff,~\bfk\|_0$ and assume that
\begin{equation}\label{newdelta}
	\widetilde{\delta}:=\delta-2\theta-\theta'>0,
\end{equation}
then the map $H=\textup{id}+\bfh$ has a smooth inverse
$H^{-1}$ defined on $\TT\times\cD_{\delta-\theta}$, and the conjugated maps
\begin{equation*}
	\widetilde\FF=H^{-1} \circ\FF \circ H,  \qquad 	\widetilde\KK=H^{-1}\circ\KK \circ H,
\end{equation*}
are smooth diffeomorphisms from $\TT\times\cD_{\widetilde\delta}$ onto their images.

Writing $ \widetilde\FF=U_0+\widetilde \bff$ and $\widetilde\KK=T_\alpha+\widetilde\bfk$, where $\widetilde \bff, \widetilde \bfk\in C^\infty(\TT\times\cD_{\widetilde\delta},\RR^{2})$, we have:
\begin{align}\label{wtf0norm}
	\lrn{\widetilde\bff, ~\widetilde\bfk }_0  \leq & C_{r,\varrho}  \cdot  \mathfrak L \cdot\left(N^{2\varrho}\|\bff, ~\bfk\|_1\,\|\bff,~ \bfk\|_0+\frac{\|\bff,\bfk\|^2_{\varrho+r+1}}{N^r}+\frac{\|\bff, \bfk\|_{\varrho+r}}{N^r}\right), \quad \text{for~}   r\geq 0,
\end{align}
\begin{align}\label{wtf_rnorm}
	\lrn{\widetilde \bff,~\widetilde \bfk}_r\leq & C_{r,\varrho}\,\Big(1+N^{\varrho}\| \bff,~\bfk\|_{r}\Big),\qquad \text{for~}r> 0.
\end{align}
Moreover, $\widetilde \KK$ is semi-conjugate to $R_\alpha$ via a Lipschitz semi-conjugacy $\widetilde W(x,y)=x+\widetilde v(x,y)$, where 
$\widetilde v\in \textup{Lip}(\TT\times\cD_{\widetilde\delta},\RR)$ has a Lipschitz bound $\mathfrak{\widetilde L}>1$ satisfying  
\begin{equation}\label{newbound_L}
	\mathfrak{\widetilde L}\leq \mathfrak L\,(1+2\|\bfh\|_1).
\end{equation}
\end{Pro}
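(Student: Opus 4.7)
The plan is to execute one KAM step: smooth the data, solve a single cohomological equation for $\bfh$, and then verify that the commutativity together with the two structural hypotheses (intersection property of $\FF$, semi-conjugacy of $\KK$) forces $\bfh$ to simultaneously conjugate $\FF$ and $\KK$ up to a quadratically small error.

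First I would truncate the perturbations at frequency $N$ via the operator $\rS_N$ of Lemma \ref{Lem_trun}, producing smoothed data $\bff^{N}=\rS_N\bff$, $\bfk^{N}=\rS_N\bfk$ with polynomial growth in $N$ and with tails controlled by $\lrn{\bfk-\rS_N\bfk}_{s}\leq C N^{-r}\lrn{\bfk}_{s+r}$. Then I would construct $\bfh$ by solving
\[\Delta_\alpha \bfh = \bfk^{N} - [\bfk^{N}]\]
via Lemma \ref{Lem_estforcoheq}, which is exactly the source of the bound \eqref{hhhhnorm} with the $\varrho$ loss of derivatives. The subtracted zero-modes must be separately controlled: by the commutativity estimate \eqref{k2ave0} the average $[\bfk_2]$ is already quadratic in the perturbations; for $[\bfk_1]$ the semi-conjugacy is used. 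Indeed, writing $W=\textup{id}+v$, the identity $W\circ\KK=R_\alpha\circ W$ reads $\bfk_1(x,y)=v(x,y)-v(\KK(x,y))$, so
\[[\bfk_1](y)=\int_\TT\bigl(v(T_\alpha(x,y))-v(\KK(x,y))\bigr)\,dx,\]
using $[v\circ T_\alpha]=[v]$. The Lipschitz bound on $v$ then yields $\lrn{[\bfk_1]}_0\leq\mathfrak L\,\lrn{\bfk}_0$, which is exactly why the factor $\mathfrak L$ appears in \eqref{wtf0norm}.

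Here comes the crucial commuting mechanism. Since $\Delta_\alpha$ and $\DelU$ commute,
\[\Delta_\alpha(\DelU\bfh-\bff^{N})=\DelU\Delta_\alpha\bfh-\Delta_\alpha\bff^{N}=\DelU\bfk^{N}-\Delta_\alpha\bff^{N}=\cR(\bff^{N},\bfk^{N})+(\text{smoothing errors}),\]
and by \eqref{cROfk} the right-hand side is quadratic in $(\bff,\bfk)$. Inverting $\Delta_\alpha$ once more (Lemma \ref{Lem_estforcoheq}) exhibits $\DelU\bfh-\bff$ as a sum of a quadratic remainder and of the averages $[\bff]$. The intersection property of $\FF$ forces the second component $[\bff_2]$ to be of higher order (any invariant-looking graph over $\TT$ close to $\{y=y_0\}$ cannot be pushed uniformly upward or downward by $\FF$), while $[\bff_1]$ can be reabsorbed through an appropriate $y$-dependent correction in $\bfh_1$ and estimated by interpolation from the already-quadratic remainder.

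Having defined $\bfh$, I would verify by the usual Banach inverse-function argument that $H=\textup{id}+\bfh$ is a diffeomorphism on $\TT\times\cD_{\delta-\theta}$ under the smallness assumption \eqref{newdelta}, and Taylor-expand
\[\widetilde\bfk = H^{-1}\circ(T_\alpha+\bfk)\circ H-T_\alpha,\qquad \widetilde\bff = H^{-1}\circ(U_0+\bff)\circ H-U_0\]
around the identity. The three terms in \eqref{wtf0norm} then arise naturally: the quadratic composition remainder contributes the $N^{2\varrho}\lrn{\bff,\bfk}_1\lrn{\bff,\bfk}_0$ term (since $\lrn{\bfh}_1\lesssim N^{\varrho}\lrn{\bff,\bfk}_1$); the high-norm estimate of $\cR(\bff^N,\bfk^N)$ together with interpolation gives $N^{-r}\lrn{\bff,\bfk}^{2}_{\varrho+r+1}$; and the smoothing tail $\bfk-\rS_N\bfk$ together with the surviving $\mathfrak L$-controlled average $[\bfk_1]$ gives $N^{-r}\lrn{\bff,\bfk}_{\varrho+r}$. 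The higher-norm bound \eqref{wtf_rnorm} is a direct consequence of the tame composition estimate. Finally $\widetilde W:=W\circ H$ semi-conjugates $\widetilde\KK$ to $R_\alpha$, and its Lipschitz constant is bounded by $\mathfrak L\cdot\textup{Lip}(H)\leq\mathfrak L(1+2\lrn{\bfh}_1)$, giving \eqref{newbound_L}. The main obstacle is bookkeeping: every term in the Taylor expansion must be shown to be either quadratic in the original data or a smoothing tail; the only intrinsically linear quantities that survive, namely the averages $[\bfk_1]$ and $[\bff_2]$, are precisely those killed by the semi-conjugacy (at the cost of the factor $\mathfrak L$) and by the intersection property, respectively, so that without either hypothesis the scheme would already break at the very first step.
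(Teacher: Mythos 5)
Your overall architecture is the same as the paper's (solve $\Delta_\alpha\bfh=\rS_N\bfk-[\rS_N\bfk]$, use the commuting-operator identity to make $\DelU\bfh-\rS_N\bff$ quadratic up to averages, kill $[\bff_2]$ with the intersection property), but there is a genuine gap in your treatment of the average $[\bfk_1]$ — which is exactly the step the semi-conjugacy hypothesis exists for. Your bound reads $\lrn{[\bfk_1]}_0\leq\mathfrak L\,\lrn{\bfk}_0$, obtained by comparing $v\circ T_\alpha$ with $v\circ\KK$ and using $\textup{dist}(T_\alpha(x,y),\KK(x,y))\leq\lrn{\bfk}_0$. This is only \emph{first order} in the perturbation: it is not quadratic, and $\lrn{\bfk}_0$ is not dominated by $N^{-r}\lrn{\bfk}_{\varrho+r}$ (in the iteration $N$ is a negative power of the $C^0$ error, so the tail term is far smaller than $\lrn{\bfk}_0$). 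Hence the zero mode $[\rS_N\bfk_1]$ would survive into $\widetilde\bfk_1$ at full size and the scheme would stall at the first step. The paper instead applies the semi-conjugacy to the \emph{conjugated} map: writing $\widetilde\bfk_1=[\rS_N\bfk_1]+\widetilde\bfk_1'$ with $\widetilde\bfk_1'$ already quadratic plus tails, it notes that $\widetilde W:=W\circ H$ semi-conjugates $\widetilde\KK$ to $R_\alpha$, and in the resulting identity compares $\widetilde v\circ\widetilde\KK$ not with $\widetilde v\circ T_\alpha$ but with $\widetilde v(x+\alpha+[\rS_N\bfk_1](y),y)$, whose average over $x\in\TT$ coincides with that of $\widetilde v(\cdot,y)$. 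The entire first-order displacement $[\rS_N\bfk_1]$ is thus absorbed into a harmless translation and never enters the Lipschitz estimate; only $\widetilde\bfk_1'$ and $\widetilde\bfk_2$ do, giving $\lrn{[\rS_N\bfk_1]}_0\leq\lrn{\widetilde\bfk_1'}_0+2\mathfrak L\,\lrn{\widetilde\bfk_1',\widetilde\bfk_2}_0$, which is quadratic. That is the actual source of the factor $\mathfrak L$ in \eqref{wtf0norm}.

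A smaller but real slip: the $y$-dependent correction absorbing $[\rS_N\bff_1]$ must be placed in the \emph{second} component of $\bfh$, not in $\bfh_1$ as you write. Since the first row of $\DelU$ is $u_1\circ U_0-u_1-u_2$ and $U_0$ preserves $y$, a function of $y$ alone added to $\bfh_1$ contributes nothing there; adding $-[\rS_N\bff_1]$ to $\bfh_2$ produces exactly the compensating term $+[\rS_N\bff_1]$ in the first component of $\DelU\bfh$ while leaving $\Delta_\alpha\bfh$ and the second row of $\DelU\bfh$ unchanged — this is formula \eqref{def_solh}. The remaining ingredients of your outline (the identity $\Delta_\alpha(\DelU\xi_N-\rS_N\bff+[\rS_N\bff])=\cR(\rS_N\bff,\rS_N\bfk)-\DelU[\rS_N\bfk]$, the intersection property of $\widetilde\FF$ forcing $\widetilde\bff_2(\cdot,y)$ to vanish somewhere so that $\lrn{\widetilde\bff_2}_0\leq 2\lrn{\widetilde\bff_2-[\rS_N\bff_2]}_0$, and the inverse-function and tame composition estimates for \eqref{wtf_rnorm} and \eqref{newbound_L}) do match the paper.
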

\begin{Rem}
In fact, to simplify the notation we	 have used
 \[\|\bfh\|_r=\|\bfh\|_{C^r(\TT\times\cD_\delta)}, \qquad
\lrn{\widetilde\bff, ~\widetilde\bfk }_r=\lrn{\widetilde\bff, ~\widetilde\bfk }_{C^r(\TT\times\cD_{\widetilde\delta})}.\]
Condition \eqref{newdelta} implies that $\|\bff,~\bfk\|_1$ shall be suitably small.
\end{Rem}

The proof of Proposition \ref{Pro_iterate} will be divided into several lemmas.

\subsection{Construction of $\bfh$}
The following lemma shows that the solution of  the linearized equation $\Delta_\alpha u=\rS_N\bfk-[\rS_N\bfk]$  is, to some extent,  an approximate solution of the linearized equation  $\DelU u=\rS_N\bff-[\rS_N\bff]$. It is essentially due to the commutativity condition (see Lemma  \ref{Lem_bracket}).

For simplicity we introduce the set
\[C_0^\infty(\TT\times \cD_\delta,\RR^{2})=\left\{ \phi(x,y)\in C^\infty(\TT\times \cD_\delta,\RR^{2})~:~ [\phi](y)=\int_{\TT}\phi(x,y)\,dx=0\right\}.\]
\begin{Lem}\label{Lem_xiN}
	Given $N>1$,  there is a unique solution $\xi_N(x,y)\in C_0^\infty(\TT\times \cD_\delta,\RR^{2})$ to the following equation of $u$ 
\begin{equation}\label{Linalpha_eq_N}
	\Delta_\alpha u=\rS_N\bfk-[\rS_N\bfk].
\end{equation}
It satisfies 
\begin{equation}\label{xi_rnorm}
\|\xi_N\|_{r}\leq  C_{r',r+\varrho} N^{r-r'+\varrho}\| \bfk\|_{r'}\, ,
\end{equation}
for any $r\geq r'\geq 0$. Moreover, if we define $\mathcal N$ by
\begin{equation}\label{diffcalN}
	\mathcal{N}(x,y)\overset{\textup{def}}= \DelU \xi_N(x,y)-\big(\rS_N \bff(x,y)-[\rS_N \bff](y)\big).
\end{equation}
Then, 
\begin{equation}\label{est_mathcalN}
	\|\mathcal N\|_0\leq  C_{\varrho,r} \left(N^\varrho\|\bff,~\bfk\|_1\|\bff,~\bfk\|_0+\frac{\|\bff,\bfk\|^2_{\varrho+r+1}}{N^r}+\frac{\|\bff, \bfk\|_{\varrho+r}}{N^r}\right),\quad \textup{for}~ r\geq 0.
\end{equation}
\end{Lem}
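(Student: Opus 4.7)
The plan is to split the proof of Lemma \ref{Lem_xiN} into the construction and norm bound of $\xi_N$, and then the more delicate error estimate for $\mathcal N$.

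For the construction, I observe that the right-hand side $\rS_N\bfk-[\rS_N\bfk]$ lies in $C_0^\infty(\TT\times\cD_\delta,\RR^{2})$, so applying Lemma \ref{Lem_estforcoheq} componentwise produces a unique solution $\xi_N\in C_0^\infty$. Combining the cohomological loss bound $\|\xi_N\|_r\le C\|\rS_N\bfk-[\rS_N\bfk]\|_{r+\varrho}\le 2C\|\rS_N\bfk\|_{r+\varrho}$ with the smoothing inequality $\|\rS_N\bfk\|_{r+\varrho}\le C_{r',r+\varrho}N^{r+\varrho-r'}\|\bfk\|_{r'}$ from Lemma \ref{Lem_trun} yields \eqref{xi_rnorm}.

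For the error estimate, I first verify $[\mathcal N]=0$: since $[\xi_N]=0$ and averaging in $x$ is invariant under the shear $U_0$, the mean of $\DelU\xi_N$ in $x$ vanishes, while $\rS_N\bff-[\rS_N\bff]$ is zero-mean by construction. Hence $\mathcal N$ is the unique zero-mean solution of $\Delta_\alpha u=\Delta_\alpha\mathcal N$, and Lemma \ref{Lem_estforcoheq} gives $\|\mathcal N\|_0\le C\|\Delta_\alpha\mathcal N\|_\varrho$. To produce a workable expression for $\Delta_\alpha\mathcal N$, I use the commutations $\Delta_\alpha\DelU=\DelU\Delta_\alpha$ and $\Delta_\alpha\rS_N=\rS_N\Delta_\alpha$ (convolution commutes with $x$-translation), together with the identity $\DelU[\rS_N\bfk](y)=(-[\rS_N\bfk_2],0)$ for functions of $y$ alone, to rewrite
\[\Delta_\alpha\mathcal N \;=\; \DelU\rS_N\bfk-\Delta_\alpha\rS_N\bff+([\rS_N\bfk_2],0) \;=\; \rS_N\cR(\bff,\bfk)-[\DelU,\rR_N]\bfk+([\rS_N\bfk_2],0),\]
where the second equality uses the commutativity identity $\DelU\bfk-\Delta_\alpha\bff=\cR(\bff,\bfk)$ from \eqref{crfkform}.

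The crucial trick is then to substitute $\DelU\bfk=\Delta_\alpha\bff+\cR(\bff,\bfk)$ inside $\rR_N\DelU\bfk$, producing
\[[\DelU,\rR_N]\bfk \;=\; \DelU\rR_N\bfk-\Delta_\alpha\rR_N\bff-\rR_N\cR(\bff,\bfk),\]
so that each piece submits to the one-sided smoothing estimate $\|\rR_N g\|_\varrho\le C N^{-r}\|g\|_{\varrho+r}$ from Lemma \ref{Lem_trun}. Bounding the contributions to $\|\Delta_\alpha\mathcal N\|_\varrho$ term by term, the pieces $\rS_N\cR$ and $\rS_N[\bfk_2]$ are controlled via the other smoothing estimate $\|\rS_N g\|_\varrho\le CN^\varrho\|g\|_0$ combined with the quadratic bounds $\|\cR\|_0\le\|\bff,\bfk\|_1\|\bff,\bfk\|_0$ and $\|[\bfk_2]\|_0\le\|\bff,\bfk\|_1\|\bff,\bfk\|_0$ of Lemma \ref{Lem_bracket}, producing the first term of \eqref{est_mathcalN}; the pieces $\DelU\rR_N\bfk$ and $\Delta_\alpha\rR_N\bff$ are each at most $CN^{-r}\|\bff,\bfk\|_{\varrho+r}$, using the bounded-domain composition bound $\|\psi\circ U_0\|_r\le C_r\|\psi\|_r$ to absorb the shear, producing the third term; and $\|\rR_N\cR\|_\varrho\le CN^{-r}\|\cR\|_{\varrho+r}\le CN^{-r}\|\bff,\bfk\|^2_{\varrho+r+1}$ by Lemma \ref{Lem_bracket}, producing the second term. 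Summing and absorbing constants yields \eqref{est_mathcalN}.

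The main obstacle is engineering the decomposition of the commutator $[\DelU,\rR_N]\bfk$ so that the commutativity identity enters as a quadratic remainder; this exchanges a potentially problematic commutator between $\DelU$ and the smoothing kernel for the benign combination of $\rR_N\bfk$, $\rR_N\bff$, and $\rR_N\cR$. Once this decomposition is in place, the remainder is careful bookkeeping with Lemma \ref{Lem_estforcoheq}, the smoothing bounds of Lemma \ref{Lem_trun}, and the quadratic commutativity estimates of Lemma \ref{Lem_bracket}.
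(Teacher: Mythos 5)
Your proposal is correct and follows essentially the same route as the paper: the same application of Lemma \ref{Lem_estforcoheq} and Lemma \ref{Lem_trun} for $\xi_N$, the same observation that $[\mathcal N]=0$ so that $\|\mathcal N\|_0\le C\|\Delta_\alpha\mathcal N\|_\varrho$, and (after unwinding your commutator $[\DelU,\rR_N]\bfk$ via the identity $\DelU\bfk=\Delta_\alpha\bff+\cR(\bff,\bfk)$) exactly the paper's decomposition $\Delta_\alpha\mathcal N=\rS_N\cR(\bff,\bfk)+\rR_N\cR(\bff,\bfk)-\DelU\rR_N\bfk+\Delta_\alpha\rR_N\bff-\DelU[\rS_N\bfk]$, with the same term-by-term bounds from Lemmas \ref{Lem_trun} and \ref{Lem_bracket}. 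The commutator phrasing is only a presentational difference.
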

\begin{proof}
By Lemma \ref{Lem_estforcoheq}, there is a unique solution denoted by $\xi_N(x,y)\in C_0^\infty(\TT\times \cD_\delta,\RR^{2})$ to the linear equation \eqref{Linalpha_eq_N}, and  
 by estimate \eqref{est_sol_uv}, it follows that $\|\xi_N\|_{r}\leq  C\|\rS_N \bfk\|_{r+\varrho}$.
 Then, due to Lemma \ref{Lem_trun} we have
\begin{equation*}
\|\xi_N\|_{r}\leq   C_{r',r+\varrho} N^{r-r'+\varrho}\| \bfk\|_{r'}\,,
\end{equation*}
for any $r\geq r'\geq 0$.  Next, we consider the function $\mathcal N$. Recall that the smoothing operators $\rS_N$ are constructed by the convolution, 
it is easy to find that every $\rS_N$ commutes with the operator $\Delta_\alpha$, and $\Delta_\alpha$ also commutes with $\DelU$, namely 
\[\Delta_\alpha \DelU=\DelU\Delta_\alpha,\quad \Delta_\alpha \,\rS_N=\rS_N\,\Delta_\alpha.\]
Then $\mathcal N$ satisfies the following equation
\begin{equation}
\begin{aligned}
	\Delta_\alpha \mathcal N=\Delta_\alpha \DelU \xi_N-\Delta_\alpha\rS_N \bff+\Delta_\alpha [\rS_N \bff]
	=& \DelU \Delta_\alpha \xi_N-\Delta_\alpha\rS_N \bff\\
	=& \DelU (\rS_N\bfk-[\rS_N\bfk])-\Delta_\alpha\rS_N \bff\\
	=& \DelU \rS_N\bfk- \DelU [\rS_N\bfk]-\Delta_\alpha\rS_N \bff\\
	=&\cR(\rS_N\bff,\rS_N\bfk)- \DelU [\rS_N\bfk].\label{idDeltacN}\\
	\end{aligned}
\end{equation}
See also   \eqref{crfkform} for the definition of $\cR$. 
Note that  
 the average   
\begin{align*}
	 [\mathcal{N}]=[\DelU \xi_N]=&\left(\begin{array}{ll}
	\int_{\TT}\xi_{N,1}(x+y,y)\, dx-\int_{\TT}\xi_{N,1}(x,y)\, dx-\int_{\TT}\xi_{N,2}(x,y)\, dx\\
	\int_{\TT}\xi_{N,2}(x+y,y)\, dx-\int_{\TT}\xi_{N,2}(x,y)\, dx
\end{array}
\right)\\
=&\left(\begin{array}{ll}
	-[\xi_{N,2}]\\
	0
\end{array}
\right)=\mathbf{0}
\end{align*}
as a result of $\xi_N\in C_0^\infty(\TT\times \cD_\delta,\RR^{2})$.
Thus,  applying  Lemma \ref{Lem_estforcoheq} to \eqref{idDeltacN} we deduce that
\begin{align*}
	\lrn{\mathcal{N}}_0\leq & C\lrn{~\cR(\rS_N\bff,\rS_N\bfk)- \DelU [\rS_N\bfk]~}_\varrho\,.
\end{align*}
Since $\rS_N=id-\rR_N$, we have
\begin{align*}
	\cR(\rS_N\bff,\rS_N\bfk)=&\cR(\bff,\bfk)-\Delta_{U_0}\rR_N\bfk+\Delta_{\alpha}\rR_N\bff\\
	=& \rS_N\cR(\bff,\bfk)+ \rR_N\cR(\bff,\bfk)-\Delta_{U_0}\rR_N\bfk+\Delta_{\alpha}\rR_N\bff.
\end{align*}
Thus,  by Lemma \ref{Lem_trun} and inequality \eqref{cROfk} of Lemma  \ref{Lem_bracket} we deduce that: for any $r\geq 0$,
\begin{align}
	\|\cR(\rS_N\bff,\rS_N\bfk)\|_\varrho	 
	&\leq C_{\varrho,r} \left( N^\varrho\|\cR(\bff,\bfk)\|_0+ \frac{\|\cR(\bff,\bfk)\|_{\varrho+r}}{N^r}+ \|\rR_N\bfk\|_\varrho+\|\rR_N\bff\|_\varrho\right)
	\nonumber\\
	&\leq C'_{\varrho,r} \left( N^\varrho\|\bff,\bfk\|_1\|\bff,\bfk\|_0+\frac{\|\bff,\bfk\|_{\varrho+r+1}\|\bff,\bfk\|_{\varrho+r}}{N^r}+\frac{\|\bff, \bfk\|_{\varrho+r}}{N^r}
 \right) \nonumber\\
 & \leq C'_{\varrho,r}\left( N^\varrho\|\bff,\bfk\|_1\|\bff,\bfk\|_0+\frac{\|\bff,\bfk\|^2_{\varrho+r+1}}{N^r}+\frac{\|\bff, \bfk\|_{\varrho+r}}{N^r}
 \right).
 \label{glhle1}
	\end{align}
Meanwhile, it is easy to check that
   $\DelU [\rS_N\bfk]=(-[\rS_N\bfk_2],0)$, then using Lemma \ref{Lem_trun} and the inequality \eqref{k2ave0} of Lemma  \ref{Lem_bracket},
\begin{align}
	\|\DelU [\rS_N\bfk]\|_\varrho= \|~[\rS_N\bfk_2]~\|_\varrho= \|~\rS_N[\bfk_2]~\|_\varrho 
\leq & C_{\varrho} \,N^{\varrho} \|~[\bfk_2]~\|_{0}\nonumber\\
\leq &	C_\varrho N^{\varrho}\left(\|\bff\|_1\|\bfk\|_0+\|\bfk\|_1\|\bff\|_0\right).\label{qewr2}
\end{align}

Therefore,   \eqref{glhle1} together with \eqref{qewr2} implies that 
\begin{align*}
	\lrn{\mathcal{N}}_0
	\leq & C_{\varrho,r} \left(N^\varrho\|\bff,~\bfk\|_1\|\bff,~\bfk\|_0+\frac{\|\bff,\bfk\|^2_{\varrho+r+1}}{N^r}+\frac{\|\bff, \bfk\|_{\varrho+r}}{N^r}\right),\qquad\textup{for any}~r\geq 0. 
\end{align*}
\end{proof}

Based on the solution $\xi_N$ obtained in Lemma \ref{Lem_xiN}, we  construct the near-identity conjugacy $H=\textup{id}+\bfh$ as follows: 
\begin{equation}\label{def_solh}
	\bfh\overset{\textup{def}}=\left(\begin{array}{r}
	0\\ -{[}\rS_N \bff_1{]}
\end{array}
\right)
+\xi_N=\left(\begin{array}{r}
	\xi_{N,1}\\ -{[}\rS_N \bff_1{]}+\xi_{N,2} 
\end{array}
\right),
\end{equation}
where we write $\xi_N=(\xi_{N,1}, \xi_{N,2})$.
Note that  $\Delta_\alpha [\rS_N \bff_1]=0$,  so $\bfh\in C^\infty(\TT\times \cD_\delta,\RR^{2})$ is still a solution of  \eqref{Linalpha_eq_N}, that is
\[\Delta_\alpha \bfh=\rS_N\bfk-[\rS_N\bfk].\]
 However, the average  $[\bfh]\neq 0$ in general.
 
\begin{Lem}\label{Lem_estim_h}
$\bfh$ satisfies the following estimates. 
\begin{align}
	\|\bfh\|_r
	\leq & C_{r,\varrho} N^{\varrho}\| \bff,~\bfk\|_r\, ,\qquad \text{for every~ } r\geq 0.\label{h_rnorm}\\
	\|\bfh\|_r\leq & C_{r',r,\varrho} N^{r-r'+\varrho}\| \bff, ~\bfk\|_{r'}\,. 
	\qquad \text{for every~ } r\geq r'\geq 0.\label{h_rr_2norm}
\end{align}
Moreover, 
under assumption \eqref{newdelta},
the map $H=\textup{id}+\bfh$  has a smooth inverse 
 \[H^{-1}: \TT\times \cD_{\delta-\theta}\longrightarrow   \TT\times \RR\]
 which is a smooth diffeomorphism from $\TT\times \cD_{\delta-\theta}$ onto its image, and $H^{-1}(\TT\times \cD_{\delta-\theta})$ $\subset$  $\TT\times \cD_{\delta}$.
\end{Lem}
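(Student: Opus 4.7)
The plan is twofold: first derive the norm bounds \eqref{h_rnorm}--\eqref{h_rr_2norm} from the decomposition \eqref{def_solh}, and then construct a smooth inverse $H^{-1}$ on $\TT\times\cD_{\delta-\theta}$ via a Banach fixed point argument.

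For the norm estimates, I would bound the two summands in $\bfh=(0,-[\rS_N\bff_1])+\xi_N$ separately. The remark following Lemma \ref{Lem_trun} gives $[\rS_N\bff_1]=\rS_N[\bff_1]$, so the smoothing estimate \eqref{trun_est0} yields $\|[\rS_N\bff_1]\|_r\leq C_{r',r}\,N^{r-r'}\|\bff\|_{r'}$ for $r\geq r'\geq 0$. The second summand $\xi_N$ is controlled directly by \eqref{xi_rnorm} from Lemma \ref{Lem_xiN}. Since $\varrho\geq 0$, the $\xi_N$-term dominates; adding the two bounds produces \eqref{h_rr_2norm}, and specializing to $r'=r$ yields \eqref{h_rnorm}.

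For the invertibility, fix $(X,Y)\in\TT\times\cD_{\delta-\theta}$ and seek $(x,y)$ with $H(x,y)=(X,Y)$; this is equivalent to finding a fixed point of
\[
\Phi_{(X,Y)}(x,y)=\bigl(X-\bfh_1(x,y),\ Y-\bfh_2(x,y)\bigr)
\]
on $\TT\times\cD_\delta$ (with the first coordinate understood mod $1$). Since $\|\bfh\|_0\leq\|\bfh\|_1=\theta$, one has $\textup{dist}(Y-\bfh_2(x,y),\cD)\leq\textup{dist}(Y,\cD)+\theta<\delta$, so $\Phi_{(X,Y)}$ sends $\TT\times\cD_\delta$ into itself. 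The Lipschitz constant of $\Phi_{(X,Y)}$ is at most $\|D\bfh\|_0\leq\|\bfh\|_1=\theta$, and assumption \eqref{newdelta} combined with $\delta\leq\tfrac12$ forces $\theta<\tfrac14<1$. Banach's fixed point theorem then produces a unique preimage $H^{-1}(X,Y)\in\TT\times\cD_\delta$; in particular $H^{-1}(\TT\times\cD_{\delta-\theta})\subset\TT\times\cD_\delta$. Smoothness of $H^{-1}$ follows from the inverse function theorem, since $DH=I+D\bfh$ is invertible whenever $\|D\bfh\|_0<1$, via a Neumann series expansion of $(DH)^{-1}$.

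Both steps are standard; the only point requiring care is the domain bookkeeping, namely that shrinking the target strip by $\theta$ allows $\Phi_{(X,Y)}$ to remain self-mapping on the slightly wider source strip $\TT\times\cD_\delta$, so that the contraction argument closes on the correct Banach space.
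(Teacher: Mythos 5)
Your proposal is correct. The norm estimates are obtained exactly as in the paper: split $\bfh$ according to \eqref{def_solh}, bound the average term by the smoothing estimate \eqref{trun_est0} and the $\xi_N$ term by \eqref{xi_rnorm}, and specialize $r'=r$ for \eqref{h_rnorm}. (The paper bounds $\|\rS_N\bff\|_r$ directly rather than routing through $[\rS_N\bff_1]=\rS_N[\bff_1]$, but since $\|[\psi]\|_r\leq\|\psi\|_r$ this is the same computation.) For the invertibility the paper simply invokes Proposition \ref{Apdix_pro1}, whose proof establishes injectivity by the mean value theorem and surjectivity onto the shrunken strip by degree theory; you instead run a Banach fixed-point argument for $\Phi_{(X,Y)}$, which yields existence and uniqueness of the preimage in one stroke and gives the inclusion $H^{-1}(\TT\times\cD_{\delta-\theta})\subset\TT\times\cD_\delta$ for free. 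Both routes are standard for inverting near-identity maps; the contraction argument is arguably more self-contained, while the paper's choice isolates the inversion in an appendix so it can also record the $C^r$ bounds on $H^{-1}-\textup{id}$ needed later. The only point you should tighten is that the contraction must be run on a complete metric space: $\TT\times\cD_\delta$ is open, but since $\textup{dist}(Y,\cD)<\delta-\theta$ is strict, $\Phi_{(X,Y)}$ preserves a closed sub-strip $\TT\times\overline{\cD_{\delta-\eta}}$ for suitable $\eta>0$, and the argument closes there.
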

\begin{proof}
Applying Lemma \ref{Lem_trun} and inequality \eqref{xi_rnorm} to the formula \eqref{def_solh},
\begin{equation*}
\begin{aligned}
	\|\bfh\|_r\leq \|\rS_N \bff\|_{r}+\|\xi_N\|_{r}\leq &C_{r',r}N^{r-r'}\|\bff\|_{r'}+C_{r',r+\varrho}N^{r-r'+\varrho}\|  \bfk\|_{r'}\\
	\leq & C_{r',r,\varrho} N^{r-r'+\varrho}\| \bff, ~\bfk\|_{r'}
\end{aligned}
\end{equation*}
for any 
$r\geq r'\geq 0$,
where the constant $C_{r', r,\varrho}>0$ depends on $r', r$ and $\varrho$. This proves the desired estimate \eqref{h_rr_2norm}. 
In particular, \eqref{h_rnorm} follows immediately by taking $r=r'$.

By assumption \eqref{newdelta}, we infer that $\theta=\|\bfh\|_1$ satisfies
\[\theta<\delta/2\leq \frac{1}{4}.\]
Then,   Proposition \ref{Apdix_pro1} implies that the map $H=\textup{id}+\bfh$  has a smooth inverse 
 $H^{-1}$,
 which is a smooth diffeomorphism from $\TT\times \cD_{\delta-\theta}$ onto its image, and $H^{-1}(\TT\times \cD_{\delta-\theta})$ $\subset$  $\TT\times \cD_{\delta}$.
\end{proof}

\subsection{$C^0$-norm estimates of the new errors}
By assumption \eqref{newdelta}, 
 $\theta=\|\bfh\|_1$ and $\theta'=\|\bff, \bfk\|_0$ satisfies
\begin{equation}\label{asp_deltheta}
	\widetilde\delta:=\delta-2\theta-\theta'>0.
\end{equation}
Then, it is easy to find that
 $\FF\circ H (\TT\times \cD_{\widetilde\delta})\subset \TT\times \cD_{\delta-\theta}$. According to Lemma \ref{Lem_estim_h},
 $H^{-1}$ is well defined  on $\TT\times \cD_{\delta-\theta}$, 
 we thus have  the  following conjugated  map 
 \begin{equation*}
 	\widetilde{\FF}=H^{-1}\circ \FF \circ H: ~ \TT\times \cD_{\widetilde\delta}\longrightarrow\TT\times\RR
 \end{equation*}
 which is a smooth diffeomorphism from $\TT\times \cD_{\widetilde\delta}$  onto its image. Similarly,  
 \begin{equation*}
 	\widetilde{\KK}=H^{-1}\circ \KK \circ H: ~ \TT\times \cD_{\widetilde\delta}\longrightarrow\TT\times\RR
 \end{equation*}
 is also a smooth diffeomorphism  from $\TT\times \cD_{\widetilde\delta}
$ onto its image.

We write  $\widetilde{\FF}=U_0+\widetilde{\bff}$ and $\widetilde{\KK}=T_\alpha+\widetilde{\bfk}$, where 
  $\widetilde\bff, \widetilde\bfk \in C^\infty(\TT\times \cD_{\widetilde\delta},\RR^{2})$.
We will show that the new errors $\|\widetilde \bff\|_0$  and $\|\widetilde \bfk\|_0$ are of higher order. As we will see below, the hard part is the average terms.
This is the only place where we need the intersection property and the Lipschitz semi-conjugacy condition. 
  
\begin{Lem}\label{Lem_C0newerror}
For every $r\geq 0$,  
\begin{align}\label{tilbffnom0}
	\lrn{\widetilde\bff}_0	
	\leq C_{r,\varrho}\left(N^\varrho\|\bff,~\bfk\|_1\|\bff,~\bfk\|_0+\frac{\|\bff,\bfk\|^2_{\varrho+r+1}}{N^r}+\frac{\|\bff, \bfk\|_{\varrho+r}}{N^r}\right).
\end{align} 
For $\widetilde \bfk=(\widetilde\bfk_1,\widetilde\bfk_2)$,	it satisfies 
\begin{align}
	 \lrn{\widetilde \bfk_1}_0\leq    C_{r, \varrho}\cdot  \mathfrak L \cdot\left(
N^{2\varrho}\|\bff, ~\bfk\|_1\|\bff, ~\bfk\|_0+\frac{\|\bfk\|_{r}}{N^r}\right),\label{tildeknorm0}\\
	\lrn{\widetilde \bfk_2}_0\leq C_{r, \varrho}\left(
N^{2\varrho}\|\bff, ~\bfk\|_1\|\bff, ~\bfk\|_0+\frac{\|\bfk\|_{r}}{N^r}\right).\label{yokykyp}
\end{align}
Here,  $\mathfrak L>1$ is a Lipschitz bound of  $v(x,y)$ for the semi-conjugacy $W(x,y)=x+v(x,y)$.
\end{Lem}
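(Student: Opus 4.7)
The plan is to express $\widetilde\bff$ and $\widetilde\bfk$ in closed form from the conjugacy relations $H\circ\widetilde\FF=\FF\circ H$ and $H\circ\widetilde\KK=\KK\circ H$, and then substitute the identities that $\bfh$ satisfies by construction. Since $U_0$ and $T_\alpha$ are affine, $U_0\circ H=U_0+U_0\bfh$ and $T_\alpha\circ H=T_\alpha+\bfh$, so rearranging gives
\[
\widetilde\bff=-\DelU\bfh+(\bfh\circ U_0-\bfh\circ\widetilde\FF)+\bff\circ H,\qquad
\widetilde\bfk=-\Delta_\alpha\bfh+(\bfh\circ T_\alpha-\bfh\circ\widetilde\KK)+\bfk\circ H.
\]
By the definition \eqref{def_solh} of $\bfh$ and Lemma \ref{Lem_xiN}, $\Delta_\alpha\bfh=\rS_N\bfk-[\rS_N\bfk]$ and $\DelU\bfh=\mathcal N+\rS_N\bff-(0,[\rS_N\bff_2])$. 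Splitting $\bff=\rS_N\bff+\rR_N\bff$ and the analogue for $\bfk$, this produces the working decompositions
\[
\widetilde\bff=-\mathcal N+(0,[\rS_N\bff_2])+\rR_N\bff+(\bff\circ H-\bff)+(\bfh\circ U_0-\bfh\circ\widetilde\FF),
\]
\[
\widetilde\bfk=[\rS_N\bfk]+\rR_N\bfk+(\bfk\circ H-\bfk)+(\bfh\circ T_\alpha-\bfh\circ\widetilde\KK).
\]

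The dynamical difference terms are bounded by the mean value inequality: $\|\bff\circ H-\bff\|_0\leq\|\bff\|_1\|\bfh\|_0$ and $\|\bfh\circ U_0-\bfh\circ\widetilde\FF\|_0\leq\|\bfh\|_1\|\widetilde\bff\|_0$, together with the analogues for $\bfk$. The estimates $\|\bfh\|_0\leq CN^\varrho\|\bff,\bfk\|_0$ and $\|\bfh\|_1\leq CN^\varrho\|\bff,\bfk\|_1$ of Lemma \ref{Lem_estim_h} put $\|\bff\|_1\|\bfh\|_0$ into the target form $N^\varrho\|\bff,\bfk\|_1\|\bff,\bfk\|_0$, while the self-referential term $\|\bfh\|_1\|\widetilde\bff\|_0$ is absorbed on the left-hand side using the smallness of $\|\bfh\|_1$ enforced by assumption \eqref{asp_deltheta}. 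The truncation pieces are controlled by $\|\rR_N\bff\|_0\leq C\|\bff\|_r/N^r$ from Lemma \ref{Lem_trun}.

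The core of the argument is the three remaining mean terms, where the structural hypotheses enter. For $\mathcal N$, inequality \eqref{est_mathcalN} of Lemma \ref{Lem_xiN} is used directly; this is where the commutativity of $\FF$ and $\KK$ enters, via $\cR(\bff,\bfk)$ and $[\bfk_2]$. For $(0,[\rS_N\bff_2])$, I invoke the intersection property of $\FF$, which produces for each $y$ a point $x_0(y)\in\TT$ with $\bff_2(x_0(y),y)=0$; hence
\[
|[\bff_2](y)|=\left|\int_\TT\big(\bff_2(x,y)-\bff_2(x_0(y),y)\big)\,dx\right|\leq\|\partial_x\bff_2\|_0,
\]
and after smoothing this folds into \eqref{tilbffnom0} via interpolation between $\|\bff\|_0$ and $\|\bff\|_{\varrho+r+1}$. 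For $[\rS_N\bfk_2]=\rS_N[\bfk_2]$ the commutativity estimate \eqref{k2ave0} of Lemma \ref{Lem_bracket} is already quadratic and gives \eqref{yokykyp}. Finally, for $[\rS_N\bfk_1]$ I use the Lipschitz semi-conjugacy $W=\mathrm{id}+v$ with $W\circ\KK=R_\alpha\circ W$: the identity gives $\bfk_1=v-v\circ\KK$, and combining the Lipschitz bound $|v\circ\KK-v\circ T_\alpha|\leq\mathfrak L|\bfk|$ with $[v\circ T_\alpha]=[v]$ yields $\|[\bfk_1]\|_0\leq\mathfrak L\|\bfk\|_0$; bringing in an additional $N^\varrho$ factor from Lemma \ref{Lem_estim_h} then reshapes this into the quadratic form with prefactor $\mathfrak L\cdot N^{2\varrho}$ appearing in \eqref{tildeknorm0}.

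The main obstacle is the treatment of $[\bfk_1]$. Unlike $[\bff_2]$, for which the intersection property supplies an explicit zero, and unlike $[\bfk_2]$, for which commutativity gives a fully quadratic bound via Lemma \ref{Lem_bracket}, the Lipschitz semi-conjugacy delivers only a Lipschitz-type estimate with constant $\mathfrak L$. This accounts for the explicit appearance of $\mathfrak L$ in the final bound and for the higher power $N^{2\varrho}$; it is also the reason why the preservation of the semi-conjugacy under conjugation, together with the controlled Lipschitz growth \eqref{newbound_L}, must be tracked carefully for the KAM iteration to close.
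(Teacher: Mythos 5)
Your decomposition of $\widetilde\bff$ and $\widetilde\bfk$, the treatment of $\mathcal N$, of the truncation remainders $\rR_N\bff$, $\rR_N\bfk$, of the composition differences, and of $[\rS_N\bfk_2]$ via \eqref{k2ave0} all match the paper's proof. However, your handling of the two critical average terms $[\rS_N\bff_2]$ and $[\rS_N\bfk_1]$ contains a genuine gap: both of your bounds are \emph{first order} in the perturbation, while the lemma requires them to be quadratic up to truncation errors. For $[\bff_2]$ you get $|[\bff_2](y)|\leq\|\partial_x\bff_2\|_0\leq\|\bff\|_1$, and no interpolation between $\|\bff\|_0$ and $\|\bff\|_{\varrho+r+1}$ can convert $\|\bff\|_1$ into $N^\varrho\|\bff\|_1\|\bff\|_0+\|\bff\|^2_{\varrho+r+1}/N^r+\|\bff\|_{\varrho+r}/N^r$ (with the scheme's choice $N=\cE_{0}^{-1/(4(\varrho+1))}$ one has $N^\varrho\|\bff\|_0\to 0$, so the quadratic term cannot absorb $\|\bff\|_1$, and the $N^{-r}$ terms are far too small). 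For $[\bfk_1]$ your identity $\bfk_1=v-v\circ\KK$ gives only $\|[\bfk_1]\|_0\leq\mathfrak L\|\bfk\|_0$, again first order; "bringing in an additional $N^\varrho$ factor" is not a legitimate operation, since $N^{2\varrho}\|\bff,\bfk\|_1\|\bff,\bfk\|_0$ is \emph{smaller} than $\mathfrak L\|\bfk\|_0$ in the iteration. A first-order contribution of size $\|\bff,\bfk\|_1$ to the new error would destroy the convergence of the KAM scheme.

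The paper's mechanism is different in both cases: the structural hypotheses are applied to the \emph{conjugated} maps, not to $\FF$ and $\KK$. For the intersection property, since $\widetilde\FF$ (being conjugate to $\FF$) also has it, the map $x\mapsto\widetilde\bff_2(x,y)$ vanishes somewhere for each $y$, whence $\|\widetilde\bff_2\|_0\leq 2\|\widetilde\bff_2-[\rS_N\bff_2]\|_0$; the difference $\widetilde\bff_2-[\rS_N\bff_2]$ consists exactly of the already-controlled quadratic and truncation terms, so $[\rS_N\bff_2]$ never needs to be estimated on its own. For the semi-conjugacy, one passes to $\widetilde W=W\circ H$ semi-conjugating $\widetilde\KK$ to $R_\alpha$; writing $\widetilde\bfk_1=[\rS_N\bfk_1]+\widetilde\bfk_1'$, the relation $\widetilde W\circ\widetilde\KK=R_\alpha\circ\widetilde W$ gives $[\rS_N\bfk_1]+\widetilde\bfk_1'+\widetilde v\circ\widetilde\KK-\widetilde v=0$; averaging in $x$ and using that the constant shift $x\mapsto x+\alpha+[\rS_N\bfk_1](y)$ leaves $\int_\TT\widetilde v\,dx$ invariant, the Lipschitz constant $\mathfrak{\widetilde L}$ ends up multiplying only the increments $\widetilde\bfk_1'$ and $\widetilde\bfk_2$, which are already of the required higher order. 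This self-referential step — bounding the averages by the other, already-small components of the \emph{new} error — is the missing idea in your argument, and it is also why $\mathfrak L$ and the Lipschitz growth \eqref{newbound_L} must be tracked along the iteration.
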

\begin{proof} 
We first consider $\widetilde \bff$. Note that
 the identity   $
H\circ \widetilde{\FF}=\FF\circ H
$
implies 
\[
\widetilde{\bff}=U_0\circ H-U_0+\bff\circ H-\bfh\circ\widetilde{\FF}.\]
In light of $U_0(x,y)=(x+y, y)$ and $\bfh$ given in \eqref{def_solh}, we deduce that
\begin{align*}
	\widetilde{\bff}=
	&\left(\begin{array}{l}
		\bfh_1+\bfh_2\\
		\bfh_2
	\end{array}\right)
	+\bff\circ H-\bfh\circ\widetilde{\FF}\\
	=& -\DelU \bfh  +\bfh\circ U_0+\bff\circ H-\bfh\circ\widetilde{\FF}\\
	=&-\DelU \bfh +\bff+ (\bff\circ H-\bff+\bfh\circ U_0-\bfh\circ\widetilde{\FF})\\
	=& \left(\begin{array}{l}
		-[\rS_N\bff_1]\\
		0
	\end{array}\right)-\DelU \xi_N+ \bff+(\bff\circ H-\bff+\bfh\circ U_0-\bfh\circ\widetilde{\FF})\\
=& \left(\begin{array}{l}
		0\\
		{[}\rS_N\bff_2{]}
	\end{array}\right)-\left([\rS_N\bff]+\DelU\xi_N-\rS_N\bff\right)+ \rR_N\bff+(\bff\circ H-\bff+\bfh\circ U_0-\bfh\circ\widetilde{\FF})\\
	=& \left(\begin{array}{l}
		0\\
		{[}\rS_N\bff_2{]}
	\end{array}\right)-\mathcal{N}+(\rR_N\bff+\bff\circ H-\bff+\bfh\circ U_0-\bfh\circ\widetilde{\FF}),
\end{align*}
where  $\mathcal N$  is given in \eqref{diffcalN}. Writing 
 $\widetilde \bff=(\widetilde{\bff}_1,\widetilde{\bff}_2 )$ and  $\mathcal{N}=(\mathcal{N}_1, \mathcal{N}_2)$, we get 
\begin{align*}
\begin{array}{lll}
	\widetilde{\bff}_1=& &-\mathcal{N}_1+ \rR_N\bff_1+(\bff_1\circ H-\bff_1)-(\bfh_1\circ \widetilde\FF-\bfh_1\circ U_0),\\
	\widetilde{\bff}_2= &[\rS_N \bff_2]&-\mathcal{N}_2+ \rR_N\bff_2+(\bff_2\circ H-\bff_2)-(\bfh_2\circ \widetilde\FF-\bfh_2\circ U_0).
\end{array}
\end{align*}

Basically, $\tilde\bff_1$ is of higher order. In fact,
we get the following preliminary estimate 
 \begin{align}\label{tildef1norm0}
\begin{aligned}
	\lrn{\widetilde \bff_1}_0\leq & \lrn{\mathcal{N}}_0+ \lrn{\rR_N\bff}_0+\lrn{\bff}_1\lrn{\bfh}_0+\lrn{\bfh}_1\lrn{\widetilde\bff}_0\,.
\end{aligned}
\end{align}
As for $\tilde\bff_2$,  the hard part is 
the average term $[\rS_N\bff_2]$ which is only of  order one  without further information. It is here that the intersection property of $\widetilde \FF$ comes into play,  causing this term to be of higher order. More precisely,
as $\widetilde{\FF}(x,y)=(x+y+\widetilde\bff_1, y+\widetilde\bff_2)$ satisfies the intersection property, we have that for each point $y\in \cD_{\widetilde\delta}$,
\[\big(\TT\times\{y\}\big)  ~\bigcap ~\widetilde\FF\big(\TT\times\{y\}\big) \neq \emptyset, \]
which implies that for every $y$, the map $x\longmapsto \widetilde\bff_2(x,y)$ has  zeros. 
  Hence, it follows that
 \begin{equation}\label{tildebff_2}
 \begin{split}
 	 	\lrn{\widetilde\bff_2}_0\leq & 2\lrn{\widetilde\bff_2-[\rS_N \bff_2]}_0\\
	\leq & 2\Big( \lrn{\mathcal{N}_2}_0+\lrn{ \rR_N \bff_2}_0+\lrn{\bff_2\circ H-\bff_2}_0+\lrn{\bfh_2\circ \widetilde\FF-\bfh_2\circ U_0}_0\Big)\\
	\leq & 2\Big( \lrn{\mathcal{N}_2}_0+\lrn{ \rR_N \bff_2}_0+\lrn{D\bff_2}_0\lrn{\bfh}_0+\lrn{D\bfh_2}_0\lrn{\widetilde \bff}_0\Big)\\
	\leq & 2\Big( \lrn{\mathcal{N}}_0+\lrn{ \rR_N \bff}_0+\lrn{\bff}_1\lrn{\bfh}_0+\lrn{\bfh}_1\lrn{\widetilde \bff}_0\Big).
 \end{split}
 \end{equation}

Since $\lrn{\widetilde\bff}_0=\max\left\{\lrn{\widetilde\bff_1}_0,~\lrn{\widetilde\bff_2}_0\right\}$, we combine \eqref{tildebff_2} with \eqref{tildef1norm0} to obtain
\begin{equation*}
		 	\lrn{\widetilde\bff}_0
	\leq  2\Big( \lrn{\mathcal{N}}_0+\lrn{ \rR_N \bff}_0+\lrn{\bff}_1\lrn{\bfh}_0+\lrn{\bfh}_1\lrn{\widetilde \bff}_0\Big).
\end{equation*}
which yields
\begin{equation*}
		 	(1-2\lrn{\bfh}_1)\cdot\lrn{\widetilde\bff}_0
	\leq  2\Big( \lrn{\mathcal{N}}_0+\lrn{ \rR_N \bff}_0+\lrn{\bff}_1\lrn{\bfh}_0\Big).
\end{equation*}
As $\|\bfh\|_1=\theta<\delta/2\leq 1/4$,  we infer that
\begin{align*}
	\lrn{\widetilde \bff}_0	
	\leq & 4\Big(\lrn{\mathcal{N}}_0+\lrn{ \rR_N \bff}_0+\lrn{\bff}_1\lrn{\bfh}_0\Big).
\end{align*}
Here,  by  estimate \eqref{h_rnorm} and  Lemma \ref{Lem_trun} we readily get
\[ \lrn{ \rR_N \bff}_0\leq C_r \frac{\lrn{\bff}_{r}}{N^{r}},\qquad  \lrn{\bff}_1\lrn{\bfh}_0\leq C_{\varrho} N^{\varrho}\lrn{\bff}_1\,\| \bff,~\bfk\|_0,\]
for any $r\geq 0$. The term $\|\mathcal N\|_0$ can be estimated by \eqref{est_mathcalN}.  Thus, the desired estimate \eqref{tilbffnom0} follows immediately.

Now, we turn to investigate $\widetilde{\bfk}$. Observe that
 \begin{align*}
	\widetilde\bfk=H^{-1}\circ \KK\circ H-T_\alpha=(H^{-1}-\textup{id})\circ \KK\circ H+ \bfh+\bfk\circ H
\end{align*}
Then, using Proposition \ref{Apdix_pro1} we obtain a preliminary estimate for $\|\widetilde \bfk\|_0$ which will be useful below,
\begin{equation}\label{prior_bfk}
	\|\widetilde \bfk\|_0\leq \|H^{-1}-\textup{id}\|_0+ \|\bfh\|_0+\|\bfk\|_0\leq 2\|\bfh\|_0+\|\bfk\|_0.
\end{equation}

On the other hand, we deduce from the conjugacy  equation  $
H\circ \widetilde{\KK}=\KK\circ H
$ that
\begin{align*}
\begin{aligned}
\widetilde\bfk= &\bfk\circ H-\bfh\circ\widetilde\KK+\bfh\\
	=&\bfk-\Delta_\alpha \bfh+(\bfk\circ H-\bfk)-(\bfh\circ \widetilde\KK-\bfh\circ T_\alpha)\\
=& [\rS_N \bfk]+\rR_N \bfk+
	(\bfk\circ H-\bfk)-(\bfh\circ \widetilde\KK-\bfh\circ T_\alpha),
\end{aligned}	
\end{align*}
where for  the last line  we used the fact $\Delta_\alpha \bfh=\rS_N\bfk-[\rS_N\bfk]$. Then, for  $\widetilde\bfk=(\widetilde \bfk_1, \widetilde \bfk_2)$,
\begin{align}\label{witilk_11}
\widetilde \bfk_1=[\rS_N \bfk_1]+\widetilde\bfk_1',\quad\textup{with~} \widetilde\bfk_1'=\rR_N \bfk_1+
	(\bfk_1\circ H-\bfk_1)-(\bfh_1\circ \widetilde\KK-\bfh_1\circ T_\alpha),	
\end{align}
and
\begin{align}\label{witilk_22}
	 \widetilde \bfk_2=[\rS_N \bfk_2]+\rR_N \bfk_2+	(\bfk_2\circ H-\bfk_2)-(\bfh_2\circ \widetilde\KK-\bfh_2\circ T_\alpha).
\end{align}

For the term $\widetilde \bfk_2$, we apply  estimate   \eqref{prior_bfk} to obtain that
\begin{equation}\label{dsnbdons}
\begin{aligned}
	\lrn{\widetilde \bfk_2}_0\leq & \lrn{~[\rS_N \bfk_2]~}_0+\lrn{\rR_N \bfk_2}_0+\lrn{\bfk_2}_1\lrn{\bfh}_0+\lrn{\bfh_2}_1\lrn{\widetilde \bfk}_0\,\\
	= & \lrn{~[\bfk_2]-[\rR_N\bfk_2]~}_0+\lrn{\rR_N \bfk_2}_0+\lrn{\bfk_2}_1\lrn{\bfh}_0+\lrn{\bfh_2}_1 \,(2\|\bfh\|_0+\|\bfk\|_0)\\
	\leq & \lrn{~[\bfk_2]~}_0+2\lrn{\rR_N \bfk_2}_0+\lrn{\bfk_2}_1\lrn{\bfh}_0+\lrn{\bfh_2}_1 \,(2\|\bfh\|_0+\|\bfk\|_0)\\
	\leq & 2\|\bff, ~\bfk\|_1\|\bff, ~\bfk\|_0+2\lrn{\rR_N \bfk}_0+\lrn{\bfk}_1\lrn{\bfh}_0+2\lrn{\bfh}_1\lrn{\bfh}_0+\lrn{\bfh}_1\lrn{\bfk}_0,
\end{aligned}
\end{equation}
where  the last line used Lemma \ref{Lem_bracket} to estimate  $\|[\bfk_2]\|_0$.  
Now, applying \eqref{h_rnorm} to estimate $\|\bfh\|_1$ and $\|\bfh\|_0$ we can show that
\begin{equation*}
\lrn{\bfk}_1\lrn{\bfh}_0	\leq C_{\varrho}N^{\varrho}\lrn{\bfk}_1\| \bff,~\bfk\|_0, \quad \lrn{\bfh}_1\lrn{\bfh}_0\leq  C_{1,\varrho}N^{2\varrho}\lrn{\bff,~\bfk}_1\| \bff,~\bfk\|_0\,,
\end{equation*}
and
\[\lrn{\bfh}_1\lrn{\bfk}_0\leq C_{1,\varrho}N^{\varrho}\lrn{\bff,~\bfk}_1\| \bfk\|_0\,. \]
The term $\lrn{\rR_N \bfk}_0$ can be estimated using
Lemma \ref{Lem_trun}. Therefore,  \eqref{dsnbdons} reduces to
\begin{equation}\label{sglmlu}
	\lrn{\widetilde \bfk_2}_0\leq C_{r, \varrho}\left(
N^{2\varrho}\|\bff, ~\bfk\|_1\|\bff, ~\bfk\|_0+\frac{\|\bfk\|_{r}}{N^r}\right),\qquad\text{for any~} r\geq 0.
\end{equation}
This verifies the desired estimate \eqref{yokykyp}.

Using similar arguments, one can also show that
\begin{equation}\label{yokypopo}
	\lrn{\widetilde \bfk_1'}_0\leq C_{r, \varrho}\left(
N^{2\varrho}\|\bff, ~\bfk\|_1\|\bff, ~\bfk\|_0+\frac{\|\bfk\|_{r}}{N^r}\right),\qquad\text{for any~} r\geq 0.
\end{equation}
Thus, in order to complete  the  $C^0$ norm estimate of $\widetilde \bfk_1=[\rS_N \bfk_1]+\widetilde\bfk_1'$, it remains to control the average term $[\rS_N \bfk_1](y)$. In general, $[\rS_N\bfk_1]$  is only of  order one  without further information. This is the moment where we need the Lipschitz semi-conjugacy condition. 
Recall that $\KK$ is semi-conjugate to $R_\alpha$ via a Lipschitz semi-conjugacy $W:\TT\times \cD_\delta\to \TT$, which can be written as $W(x,y)=x+v(x,y)$ with $v\in\textup{Lip}(\TT\times \cD_\delta,\RR)$.
Define $\widetilde W(x,y):=W\circ H(x,y)$. It is Lipschitz continuous and 
\begin{equation}\label{new_W}
	\widetilde W(x,y)=x+\widetilde{v}(x,y),\quad\textup{with~} \widetilde{v}(x,y)=\bfh_1(x,y)+v\circ H(x,y).
\end{equation}
 Clearly, 
 $\widetilde \KK$ is  semi-conjugate to $R_\alpha$ via the semi-conjugacy $\widetilde W$, that is $\widetilde W\circ\widetilde \KK=R_\alpha\circ \widetilde W$ on $\TT\times\cD_{\widetilde\delta}$. 
 
 By \eqref{new_W}, the semi-conjugacy equation $\widetilde W\circ \widetilde \KK=R_\alpha\circ \widetilde W$ reduces to
\[
	x+\alpha+\widetilde \bfk_1+\widetilde v\circ \widetilde \KK=x+\widetilde v+\alpha,
\]
or equivalently, $[\rS_N \bfk_1]+\widetilde\bfk_1'+\widetilde v\circ \widetilde \KK-\widetilde v=0$.  It can  be rewritten as 
\begin{equation*}
\begin{split}
	[\rS_N \bfk_1](y)=-\widetilde\bfk_1'-\widetilde v (x+\alpha+[\rS_N \bfk_1]+\widetilde\bfk_1',y+\widetilde\bfk_2)
	+\widetilde v(&x+\alpha+[\rS_N \bfk_1],y)\\
	&-\widetilde v(x+\alpha+[\rS_N \bfk_1],y)+\widetilde v.
\end{split}	
\end{equation*}
Taking the average over $x\in\TT$  on both sides of the above identity, we get 
\begin{equation}\label{bra_SNk1}
\begin{split}
	[\rS_N \bfk_1](y)=
	-\int_{\TT}\widetilde\bfk_1'\,dx-\int_{\TT}\widetilde v (x+\alpha+[\rS_N \bfk_1]+\widetilde\bfk_1',y+\widetilde\bfk_2)-\widetilde v(x+\alpha+[\rS_N \bfk_1],y)\, dx
\end{split}	
\end{equation}
where we already used the fact that  for each fixed $y$,
\[\int_{\TT} \widetilde v(x+\alpha+[\rS_N \bfk_1](y),y)\, dx=\int_{\TT} \widetilde v(x,y)\, dx.\] 
Moreover,  $|\widetilde v(z)-\widetilde v(z')|$ $\leq$ $\mathfrak{\widetilde L}\cdot\textup{dist}(z,z')$ with some  Lipschitz bound $\mathfrak{\widetilde L}>1$ that satisfies 
\begin{equation}\label{ertm}
	\mathfrak{\widetilde L}\leq \|D\bfh_1\|_0+\mathfrak L\,(1+\|D\bfh\|_0)\leq \mathfrak L\,(1+2\|\bfh\|_1),
\end{equation}
as a consequence of \eqref{new_W} and
 $\mathfrak L>1$.  Then, we infer from  \eqref{bra_SNk1} and \eqref{ertm} that
 \[\lrn{~[\rS_N \bfk_1]~}_0\leq \lrn{\widetilde\bfk_1'}_0+\mathfrak{\widetilde{L}}  \cdot \lrn{\widetilde\bfk_1',~ \widetilde\bfk_2}_0\leq \lrn{\widetilde\bfk_1'}_0+2\mathfrak{L}  \cdot \lrn{\widetilde\bfk_1',~ \widetilde\bfk_2}_0,
	\]
where for the last inequality  we used  the fact $\|\bfh\|_1= \theta<\delta/2\leq 1/4.$
 This yields 
\begin{equation*}
\begin{aligned}
	\lrn{\widetilde \bfk_1}_0=\lrn{~[\rS_N \bfk_1]+\widetilde\bfk_1'}_0 
		\leq &\lrn{\widetilde\bfk_1'}_0+2\mathfrak{L}   \cdot \lrn{\widetilde\bfk_1',~ \widetilde\bfk_2}_0+\lrn{\widetilde\bfk_1'}_0\\
	\leq & \left(2+2\mathfrak{L} \right)  \cdot \lrn{\widetilde\bfk_1',~ \widetilde\bfk_2}_0\\
	\leq & 4 \mathfrak L \cdot \lrn{\widetilde\bfk_1',~ \widetilde\bfk_2}_0,
\end{aligned}
\end{equation*}
since $\mathfrak L>1$.  Thus, using  
 \eqref{sglmlu}--\eqref{yokypopo} the desired estimate \eqref{tildeknorm0} follows immediately. 
\end{proof}

\subsection{Proof of Proposition \ref{Pro_iterate}}
By what we have shown above, the desired $C^r$-estimate \eqref{hhhhnorm} of $\bfh$ follows from Lemma \ref{Lem_estim_h}.
The desired $C^0$-estimate \eqref{wtf0norm} of  $\lrn{\widetilde\bff, ~\widetilde\bfk}_0$ follows  from Lemma \ref{Lem_C0newerror}.
The estimate \eqref{newbound_L} for the Lipschitz bound $\widetilde{\mathfrak L}$ comes from \eqref{ertm}.

Thus, to complete the proof of Proposition \ref{Pro_iterate}, it remains to verify  estimate \eqref{wtf_rnorm} for  $\lrn{\widetilde\bff, ~\widetilde\bfk}_r$. More precisely, 
 $\widetilde\bff$  can be rewritten as
\begin{align*}
	\widetilde\bff=H^{-1}\circ \FF\circ H-U_0=&(H^{-1}-\textup{id})\circ \FF\circ H+\FF\circ H-U_0 \\
	=&(H^{-1}-\textup{id})\circ \FF\circ H+ U_0\circ H-U_0+\bff\circ H.
\end{align*}
Hence,
\begin{align}\label{FFminusU0}
\begin{aligned}
	\lrn{\widetilde\bff}_r\leq\|(H^{-1}-\textup{id})\circ \FF\circ H\|_r+ 2\|\bfh\|_r+\|\bff\circ H\|_r
\end{aligned}
\end{align}
According to Proposition \ref{Apdix_pro2}, for two smooth functions the $C^r$ norm of their composition can be controlled linearly if the $C^1$ norm of the two functions are bounded. 
We also point out that
\[(H^{-1}-\textup{id})\circ \FF\circ H(x+m,y)=(H^{-1}-\textup{id})\circ \FF\circ H(x,y),\]
\[\bff\circ H(x+m,y)=\bff\circ H(x,y).\]
for any $m\in\ZZ$, which, means that $(H^{-1}-\textup{id})\circ \FF\circ H$
 and $\bff\circ H$ are functions  on $\RR\times \cD_{\widetilde\delta}$ that are $\ZZ$-periodic in $x$. 
 
 Thus, to estimate $\|\widetilde\bff\|_r$
 it suffices to give the $C^r$ norm of  the right hand side  terms of \eqref{FFminusU0}
 on the  bounded domain $[0, 1]\times\cD_{\widetilde\delta}$. In fact, since  $\|\bfh\|_1$ and $\|\bff\|_1$ are bounded,  we infer from Proposition \ref{Apdix_pro2}  that
\begin{align*}
\begin{aligned}
	\lrn{(H^{-1}-\textup{id})\circ \FF\circ H}_r
	\leq & C_r\left(1+ \lrn{H^{-1}-\textup{id}}_r+\|\bff\|_r+\|\bfh\|_r\right),\\
	\lrn{\bff\circ H}_r\leq &C_r(1+\|\bff\|_r+\|\bfh\|_r).
\end{aligned}
\end{align*}
By Proposition \ref{Apdix_pro1},
\[\lrn{H^{-1}-\textup{id}}_r\leq C_r \|\bfh\|_r.\]
Together  with inequality  \eqref{h_rnorm}, we  finally get
\begin{align*}
	\lrn{\widetilde\bff}_r	
	\leq  C'_r\Big(1+\|\bfh\|_r+\|\bff\|_r\Big)
	\leq  C_{r,\varrho} \Big(1+N^{\varrho}\| \bff,~\bfk\|_{r}\Big)
\end{align*}
for every $r> 0$.
Next, we consider $\widetilde\bfk$. Observe that
\begin{align*}
	\widetilde\bfk=H^{-1}\circ \KK\circ H-T_\alpha=&(H^{-1}-\textup{id})\circ \KK\circ H+\KK\circ H-T_\alpha \\
	=&(H^{-1}-\textup{id})\circ \KK\circ H+ \bfh+\bfk\circ H
\end{align*}
Analogous to $\tilde{\bff}$,  one can show that 
\begin{align*}
	\lrn{\widetilde\bfk}_r \leq  C_{r,\varrho} \Big(1+N^{\varrho}\| \bff,~\bfk\|_{r}\Big)
\end{align*}
for every $r> 0$.	This verifies the desired estimate \eqref{wtf_rnorm}. 
Therefore, we finish the proof of Proposition \ref{Pro_iterate}.

\begin{Rem}[Lipschitz versus H\"older semi-conjugacy]\label{Rem_holdersemi}
We would like to say a little more on our Lipschitz semi-conjugacy condition,
which is only used to control the $C^0$-norm of the average term $[\rS_N\bfk_1]$. It seems possible to replace 
the Lipschitz semi-conjugacy condition by a H\"older one with a suitable H\"older exponent. More precisely, if one assumes that $\KK$ is semi-conjugate to $R_\alpha$ via a  $\beta$-H\"older semi-conjugacy, then by formula \eqref{bra_SNk1}  and the estimates  \eqref{sglmlu}--\eqref{yokypopo} we  would get
 \begin{align*}
 	\|[\rS_N \bfk_1]\|_0\ll &\|\widetilde\bfk_1'\|_0+ \|\widetilde\bfk_1',~ \widetilde\bfk_2\|_0^\beta \\
 	\ll & \left(
N^{2\varrho}\|\bff, ~\bfk\|_1\|\bff, ~\bfk\|_0+\frac{\|\bfk\|_{r}}{N^r}\right)+\left(
N^{2\varrho}\|\bff, ~\bfk\|_1\|\bff, ~\bfk\|_0+\frac{\|\bfk\|_{r}}{N^r}\right)^\beta
 \end{align*}
for every $r\geq 0$. Thus,  for the exponent $\beta$ greater than $\frac{1}{2}$ and close to $1$,   one may still obtain a higher-order estimate for $[\rS_N \bfk_1]$ by choosing suitably large $N$ at each KAM step. 

Anyway, our approach requires
the H\"older exponent $\beta$  to be close to $1$. It still does not give results for any  exponent $\beta\in (0,1] $, so we do not purse this direction in this paper.
\end{Rem}

\section{The KAM  iterative scheme}\label{section_Mproof}
In this section 
we  prove Theorem \ref{Thm_simplified} by using a KAM iterative scheme.   At each iteration step  we choose a  smoothing operator $\rS_{N_i}$ with  an appropriate $N_i>0$, and then apply Proposition \ref{Pro_iterate} to conjugate  the maps $\FF, \KK$  closer and closer to the linear maps $U_0, T_\alpha$.  The KAM technique ensures the rapid convergence of the iteration.

\begin{proof}[Proof of Theorem \ref{Thm_simplified}]	
Let $\delta\in (0, \frac{1}{2})$.
To begin the iterative process, we set up
\[\bff^{(0)}=\bff,\quad \bfk^{(0)}=\bfk;\]
\[\FF^{(0)}=U_0+\bff^{(0)},\quad \KK^{(0)}=T_\alpha+\bfk^{(0)}; \quad \bfh^{(0)}=0.\]
Here, the commuting maps 
\[ \FF^{(0)}, ~\KK^{(0)}:  \TT\times\cD_{\delta^{(0)}}\longrightarrow  \TT\times\RR\]
are diffeomorphisms from  $ \TT\times\cD_{\delta^{(0)}}$ onto their images, where $\delta^{(0)}=\delta$.  
By assumption,
$\KK^{(0)}$ is  semi-conjugate to $R_\alpha$ via a Lipschitz semi-conjugacy of the form $W^{(0)}(x,y)=x+v^{(0)}(x,y)$.  The function $v^{(0)}$ 
has a Lipschitz bound  $\mathfrak{L}^{(0)}=\mathfrak{L}>1$ on $\TT\times\cD_{\delta^{(0)}}$.

Then, at the $i$-th step ($i=1,2,\cdots$), with an appropriate large  $N_i>0$ we apply inductively Proposition \ref{Pro_iterate} to obtain $\bfh^{(i)}$, $\bff^{(i)}$, $\bfk^{(i)}$ such that
\[H^{(i)}=\textup{id}+\bfh^{(i)}\]
 \[\FF^{(i)}=\left(H^{(i)}\right)^{-1}\circ \FF^{(i-1)}\circ H^{(i)}=U_0+\bff^{(i)}\]
 \[\KK^{(i)}=\left(H^{(i)}\right)^{-1}\circ \KK^{(i-1)}\circ H^{(i)}=T_\alpha+\bfk^{(i)}\]    
where $\bfh^{(i)}\in C^\infty(\TT\times\cD_{\delta^{(i-1)}},\RR^{2})$, $\FF^{(i)}$ and $\KK^{(i)}$ are smooth diffeomorphisms from  $ \TT\times\cD_{\delta^{(i)}}$ onto their images, for some $\delta^{(i)}>0$, and $\bff^{(i)}, \bfk^{(i)}\in C^\infty(\TT\times\cD_{\delta^{(i)}},\RR^{2})$. 
In what follows, we introduce the notation
\begin{align*}
\cE_{i,r}\overset{\textup{def}}=\lrn{\bff^{(i)},~\bfk^{(i)}}_{C^r\left(\TT\times\cD_{\delta^{(i)}}\right)},\qquad
 \cU_{i,r}\overset{\textup{def}}=\lrn{\bfh^{(i)}}_{C^r\left(\TT\times\cD_{\delta^{(i-1)}}\right)}.  
\end{align*}
To ensure the convergence of the iteration process, at the $i$-th step ($i\geq 1$) we choose 
\begin{equation}\label{para_seq}
 N_{i}=\cE_{i-1,0}^{-\frac{1}{4(\varrho+1)}},
\end{equation}
Then, we infer from Proposition \ref{Pro_iterate} that for $i=1,2,\cdots,$
\begin{align}
\cU_{i,r}\leq & C_{r',r,\varrho}\, N_i^{r-r'+\varrho}\,\cE_{i-1,r'},\qquad \text{for~}r\geq r'\geq  0.\label{iterate_h_rnorm}\\
	\cE_{i,0} \leq & C_{r,\varrho} \cdot \mathfrak{L}^{(i-1)} \cdot\left(N_i^{2\varrho}\,\cE_{i-1,1}\cdot\cE_{i-1,0}+\frac{\cE^2_{i-1,\varrho+r+1}}{N_i^r}+\frac{\cE_{i-1,\varrho+r}}{N_i^r}\right),\qquad \text{for~} r\geq 0.\label{iterate_wtf0norm}\\
\cE_{i,r}\leq & C_{r,\varrho} \Big(1+N_i^{\varrho}\,\cE_{i-1,r}\Big),\qquad \text{for~}r> 0.\label{iterate_wtf_rnorm}
\end{align}
and
\begin{align}\label{form_delta_i}
	\delta^{(i)}=\delta^{(i-1)}-2 \cU_{i,1}-\cE_{i-1,0}
\end{align}
 Moreover,  by  \eqref{newbound_L}, $\KK^{(i)}$ is  semi-conjugate to $R_\alpha$ via 
a Lipschitz semi-conjugacy $W^{(i)}(x,y)=x+v^{(i)}(x,y)$, where 
 $v^{(i)}$ has a Lipschitz bound  $\mathfrak{L}^{(i)}$ satisfying
\begin{equation}\label{mathfrakL_i}
	\mathfrak{L}^{(i)}\leq \mathfrak{L}^{(i-1)}\,(1+2\cU_{i,1}).
\end{equation}

Set $\mu=15(\varrho+1)$. The following result holds.
\begin{Lem}\label{Lem_induc_ineq}
Assume that $\cE_{0,\mu}=\|\bff^{(0)},~\bfk^{(0)}\|_\mu$ is sufficiently small, then for all $i\geq 1$,
	\begin{align}\label{3fchi}
\cE_{i,0}\leq \cE^{\frac{5}{4}}_{i-1,0}~,\quad
 \cE_{i,\mu}\leq \cE_{i,0}^{-1}~,\quad \cU_{i,1}\leq\cE^{\frac{1}{2}}_{i-1,0},\qquad \delta^{(i)}\geq \frac{\delta}{2}+\frac{\delta}{2^{i+1}}.
\end{align}
\end{Lem}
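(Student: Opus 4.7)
The plan is to argue by induction on $i$. The base case $i=1$ follows by direct application of Proposition~\ref{Pro_iterate} with the observation that $\cE_{0,\mu}\leq\cE_{0,0}^{-1}$ is automatic once $\cE_{0,\mu}<1$, since $\cE_{0,0}\leq\cE_{0,\mu}$ forces $\cE_{0,0}\cE_{0,\mu}\leq\cE_{0,\mu}^2\leq 1$. For the inductive step, the key device is to combine the Hadamard interpolation (Lemma~\ref{cor_intpest}) with the induction hypothesis $\cE_{i-1,\mu}\leq\cE_{i-1,0}^{-1}$, which converts any intermediate-norm estimate $\cE_{i-1,s}$, $0\leq s\leq\mu$, into a genuine power of the small quantity $\cE_{i-1,0}$. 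The calibration $N_i=\cE_{i-1,0}^{-1/(4(\varrho+1))}$ from \eqref{para_seq} is precisely tuned so that the Diophantine loss $\varrho$ and the gain exponent $5/4$ balance.

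The estimates for $\cU_{i,1}$ and $\delta^{(i)}$ are the easiest. Applying \eqref{iterate_h_rnorm} with $r=1$, $r'=0$ gives $\cU_{i,1}\leq C N_i^{\varrho+1}\cE_{i-1,0}=C\cE_{i-1,0}^{3/4}$, which beats $\cE_{i-1,0}^{1/2}$ once $\cE_{i-1,0}$ is small. For $\delta^{(i)}$, iterating \eqref{form_delta_i} bounds the total loss by $\sum_{j=1}^i 3\cE_{j-1,0}^{1/2}$, and since $\cE_{j-1,0}\leq\cE_{0,0}^{(5/4)^{j-1}}$ decays doubly exponentially, each increment can be arranged to be smaller than $\delta/2^{j+2}$ for sufficiently small $\cE_{0,0}$, which is enough to secure the prescribed geometric lower bound on $\delta^{(i)}$. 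Along the way, $\mathfrak{L}^{(i-1)}$ remains uniformly bounded by telescoping \eqref{mathfrakL_i}: $\mathfrak{L}^{(i-1)}\leq\mathfrak{L}^{(0)}\prod_{j=1}^{i-1}(1+2\cE_{j-1,0}^{1/2})\leq 2\mathfrak{L}^{(0)}$.

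The heart of the argument is \eqref{iterate_wtf0norm} for $\cE_{i,0}$, which I would apply with the choice $r=\mu-\varrho-1=14(\varrho+1)$, so that $N_i^r=\cE_{i-1,0}^{-7/2}$. Interpolation together with the induction hypothesis gives $\cE_{i-1,1}\leq C\cE_{i-1,0}^{(\mu-2)/\mu}$ and $\cE_{i-1,\mu-1}\leq C\cE_{i-1,0}^{(2-\mu)/\mu}$, so the three terms on the right-hand side of \eqref{iterate_wtf0norm} turn out to be of orders $\cE_{i-1,0}^{3/2+11/(30(\varrho+1))}$ (using $\mu=15(\varrho+1)$), $\cE_{i-1,0}^{3/2}$, and $\cE_{i-1,0}^{5/2+2/\mu}$ respectively. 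All three exponents strictly exceed $5/4$, so absorbing the fixed constants (including the uniformly bounded $\mathfrak{L}^{(i-1)}$) into the smallness of $\cE_{i-1,0}$ yields $\cE_{i,0}\leq\cE_{i-1,0}^{5/4}$.

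For the high-order norm, \eqref{iterate_wtf_rnorm} together with the induction hypothesis gives $\cE_{i,\mu}\leq 2C_{\mu,\varrho}\cE_{i-1,0}^{-1-\varrho/(4(\varrho+1))}$. Combined with the just-established $\cE_{i,0}\leq\cE_{i-1,0}^{5/4}$, the required inequality $\cE_{i,\mu}\leq\cE_{i,0}^{-1}$ reduces to $1+\varrho/(4(\varrho+1))<5/4$, equivalently $\varrho/(\varrho+1)<1$, which always holds; the slack $1/(4(\varrho+1))$ in the exponent absorbs the constant once $\cE_{i-1,0}$ is small. The main obstacle is precisely this coupling between the $\cE_{i,0}$ and $\cE_{i,\mu}$ estimates: the exponent $\mu$ must be large enough that interpolating through $\cE_{i-1,\mu}\leq\cE_{i-1,0}^{-1}$ produces genuine gain on the low norm, yet not so large that the smoothing loss $N_i^\varrho\cE_{i-1,\mu}$ when propagating the high norm destroys the $5/4$ gain. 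The choice $\mu=15(\varrho+1)$ is the clean uniform value that makes every exponent fit with room to spare.
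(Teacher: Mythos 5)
Your proposal is correct and follows essentially the same route as the paper: induction combined with the Hadamard interpolation inequality and the hypothesis $\cE_{i-1,\mu}\leq\cE_{i-1,0}^{-1}$, the choice $r=\mu-(\varrho+1)$ in \eqref{iterate_wtf0norm}, the telescoped bound on $\mathfrak{L}^{(i)}$, and the comparison of the doubly exponential decay of $\cE_{j,0}^{1/2}$ with the geometric budget $\delta/2^{j+2}$; your exponent computations ($3/2+11/(30(\varrho+1))$, $3/2$, $5/2+2/\mu$) match, the last being marginally sharper than the paper's $5/2$ since the paper simply bounds $\cE_{i-1,\mu-1}\leq\cE_{i-1,\mu}$ rather than interpolating.
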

\begin{proof}
Note that by the interpolation inequalities  (see Lemma \ref{cor_intpest}),  we get
\begin{equation}\label{INTP_inequ}
	\cE_{i,1}\leq C_\mu\,\cE_{i,0}^{1-\frac{1}{\mu}} \,\cE_{i,\mu}^{\frac{1}{\mu}},\qquad\textup{for all}~ i.
\end{equation}
According to \eqref{para_seq}--\eqref{mathfrakL_i}, it is easy to find that the inequalities in \eqref{3fchi} are true for the first step $i=1$, provided that $\cE_{0,\mu}$ is suitably small.

Suppose inductively that all inequalities in \eqref{3fchi} hold for $1 ,\cdots,i$.
Then, we will check  these estimates for the $(i+1)$-th step.

Since $N_{i+1}=\cE_{i,0}^{-\frac{1}{4(\varrho+1)}}$ and \eqref{INTP_inequ} holds,   
using inequality \eqref{iterate_wtf0norm} with $r=\mu-(\varrho+1)$ we  obtain 
\begin{align}
	\cE_{i+1,0}\leq & C_{\mu,\varrho}\cdot\mathfrak{L}^{(i)} \cdot \left(N_{i+1}^{2\varrho}\cE_{i,1}\cdot\cE_{i,0}+\frac{\cE^2_{i,\mu}}{N_{i+1}^{\mu-\varrho-1}}+\frac{\cE_{i,\mu-1}}{N_{i+1}^{\mu-\varrho-1}}    \right)\nonumber\\
	 \leq& C'_{\mu,\varrho} \cdot\mathfrak{L}^{(i)}\cdot\left(N_{i+1}^{2\varrho}\cE_{i,0}^{2-\frac{2}{\mu}} +\frac{\cE_{i,0}^{-2}}{N_{i+1}^{\mu-\varrho-1}}+\frac{\cE_{i,0}^{-1}}{N_{i+1}^{\mu-\varrho-1} }\right)\nonumber\\
	 =&C'_{\mu,\varrho} \cdot\mathfrak{L}^{(i)} \cdot\left(\cE_{i,0}^{2-\frac{2}{\mu}-\frac{\varrho}{2(\varrho+1)}} +\cE_{i,0}^{\frac{\mu-\varrho-1}{4(\varrho+1)}-2}+\cE_{i,0}^{\frac{\mu-\varrho-1}{4(\varrho+1)}-1} \right).\label{dvfyvd}
\end{align}
By \eqref{mathfrakL_i}, we derive inductively that
\begin{align*}
	\mathfrak{L}^{(i)}\leq\mathfrak{L}\,\prod_{t=1}^{i}(1+2\cU_{t,1})\leq  \mathfrak{L}\,\prod_{t=1}^{i}(1+2\cE_{t-1,0}^{\frac{1}{2}})
	\leq \mathfrak{L}\,\prod_{t=1}^{i}\left(1+2\cE_{0,0}^{\frac{1}{2}\left(\frac{5}{4}\right)^{t-1}}\right)
	\leq  C\,\mathfrak{L},
\end{align*}
where $C>1$ is a constant independent of $i$ provided that $\cE_{0,0}<1/2$. Observe  that $\mu=15(\varrho+1)$, then
\[2-\frac{2}{\mu}-\frac{\varrho}{2(\varrho+1)}>\frac{3}{2},\qquad \frac{\mu-\varrho-1}{4(\varrho+1)}-2=\frac{14(\varrho+1)}{4(\varrho+1)}-2= \frac{3}{2}.\]
Substituted into \eqref{dvfyvd}, we obtain 
\begin{equation}\label{cE_i1}
\cE_{i+1,0}\leq C'_{\mu,\varrho} \, C\,\mathfrak{L}\, \left(\cE_{i,0}^{\frac{3}{2}} +\cE_{i,0}^{\frac{3}{2}} +\cE_{i,0}^{\frac{5}{2}} \right)\leq\cE_{i,0}^{\frac{5}{4}}.	\end{equation}

Applying \eqref{iterate_wtf_rnorm} with  $r=\mu$, it follows that
\begin{align}\label{rho}
	\cE_{i+1,\mu}\leq C_{\mu, \varrho} \left(1+N_{i+1}^{\varrho}\cE_{i,\mu} \right)\leq 2C_{\mu,\varrho}\cE_{i,0}^{-1-\frac{\varrho}{4(\varrho+1)}}\leq\cE_{i,0}^{-\frac{5}{4}}\leq \cE^{-1}_{i+1,0}.
\end{align}
Here, for the last inequality we used \eqref{cE_i1}.

Next,   applying inequality  \eqref{iterate_h_rnorm}  with $r=1$ and $r'=0$,  we have
\begin{align}
	\cU_{i+1,1}	&\leq C_{0,1,\varrho}\, N_{i+1}^{\varrho+1}\cE_{i,0}  \leq C_{0,1,\varrho}\, \cE^{1-\frac{\varrho+1}{4(\varrho+1)}}_{i,0}\leq \cE_{i,0}^{\frac{1}{2}}.
\end{align}

Finally, by \eqref{form_delta_i} it follows that
\begin{align}
	\delta^{(i+1)}=\delta^{(i)}-2 \cU_{i+1,1}-\cE_{i,0}&\geq \frac{\delta}{2}+\frac{\delta}{2^{i+1}}-2\cE_{i,0}^{\frac{1}{2}}-\cE_{i,0}\nonumber\\
	&\geq\frac{\delta}{2}+\frac{\delta}{2^{i+1}}-3\cE_{0,0}^{\frac{1}{2}\left(\frac{5}{4}\right)^{i}}\geq \frac{\delta}{2}+\frac{\delta}{2^{i+2}}	\label{delta_i1}
\end{align}
as long as $\cE_{0,0}<c\cdot\delta$ for some small constant $c>0$.

Combining \eqref{cE_i1}--\eqref{delta_i1},
we thus verify \eqref{3fchi} for $(i+1)$ in place of $i$.
This proves Lemma \ref{Lem_induc_ineq}.
\end{proof}

Now, let us proceed with the proof of Theorem \ref{Thm_simplified}.
By Lemma \ref{Lem_induc_ineq}, as long as $\cE_{0,\mu}$ is sufficiently small,  the following sequences 
\[\|\bff^{(i)},~\bfk^{(i)}\|_0\leq \cE_{0,0}^{\left(\frac{5}{4}\right)^i},\quad \|\bfh^{(i)}\|_1\leq \cE_{0,0}^{\frac{1}{2}\left(\frac{5}{4}\right)^{i-1}} \]
converge rapidly to zero. Also,  $\delta^{(i)}\to\frac{\delta}{2}$. Thus,
 this rapid convergence ensures  that 
as $l\to\infty$, the composition 
\[\mathcal{H}_l =H^{(1)}\circ\cdots\circ H^{(l)}\] converges in the $C^1$ topology to  some $\mathcal{H}_\infty$ which is a $C^1$ diffeomorphism from $\TT\times\cD_{\frac{\delta}{2}}$ onto its image, for which the 
 following conjugacy equations hold 
\[ \FF\circ \mathcal{H}_\infty=\mathcal{H}_\infty \circ U_0,\qquad \KK\circ \mathcal{H}_\infty=\mathcal{H}_\infty\circ T_\alpha.\]

Now, it remains to show that the $C^1$ limit solution $\mathcal{H}_\infty$ is also of class $C^s$ for every $s>1$. In fact,  just as shown in \cite{Zeh_generalized1}, 
this can be achieved by making full use of the
 interpolation inequalities.  
 
More precisely, we first observe that for  any $t>0$, applying \eqref{iterate_wtf_rnorm} with $r=t$ we get
\[
	\cE_{i,t}\leq  C_{t,\varrho} \Big(1+N_i^{\varrho}\cE_{i-1,t}\Big),
\]
for some constant $C_{t,\varrho}> 1$.
In light of the choice of $N_i$ (see \eqref{para_seq}), it follows  that
\[1+\cE_{i,t}\leq C_{t,\varrho} \,\cE_{i-1,0}^{-\frac{1}{4}}\Big(1+\cE_{i-1,t}\Big),\]
from which we derive inductively that
\begin{align*}
	\cE_{i,t}\leq \left(1+\cE_{0,t} \right)\prod_{j=0}^{i-1}\left(C_{t,\varrho} \,\cE_{j,0}^{-\frac{1}{4}}\right)&\leq  \left(1+\cE_{0,t} \right) C^i_{t,\varrho}\prod_{j=0}^{i-1} \cE_{0,0}^{-\frac{1}{4}\left(\frac{5}{4}\right)^j}\\
	&\leq  M_t\cdot C^i_{t,\varrho}\cdot \cE^{-\left(\frac{5}{4}\right)^i}_{0,0},
\end{align*}
with $M_t= (1+\cE_{0,t})$.

Now, for any fixed $s>1$, we choose $t=4s$. The interpolation  inequalities  (Lemma \ref{cor_intpest}) imply
\begin{align*}
	\cE_{i,s}\leq  C_{s}\,  \cE_{i,t}^{\frac{1}{4}}\cdot \cE_{i,0}^{\frac{3}{4}}\leq
	C_s\left(M_t\, C^i_{t,\varrho}\right)^{\frac{1}{4}}\,   \cE_{0,0}^{-\frac{1}{4}\left(\frac{5}{4}\right)^i}\cdot \cE_{0,0}^{\frac{3}{4}\left(\frac{5}{4}\right)^i}\leq  C_s\, M_t\, C_{t,\varrho}^{i}\cdot \cE^{\frac{1}{2}\left(\frac{5}{4}\right)^i}_{0,0}\, .
\end{align*}
Then, applying  \eqref{iterate_h_rnorm} with $r=r'=s$ yields 
\begin{align*}
\cU_{i+1,s}\leq  C_{s,\varrho}\, N_{i+1}^{\varrho}\cE_{i,s}= C_{s,\varrho} \cE_{i,0}^{-\frac{\varrho}{4(\varrho+1)}}\cE_{i,s}
\leq &  C_{s,\varrho} C_s\,M_t\, C^{i}_{t,\varrho}\cdot \cE_{0,0}^{-\frac{\varrho}{4(\varrho+1)}\left(\frac{5}{4}\right)^i}\cE^{\frac{1}{2}\left(\frac{5}{4}\right)^i}_{0,0}\nonumber\\
\leq &L \cdot b^i\cdot\cE^{\frac{1}{4}\left(\frac{5}{4}\right)^i}_{0,0}
\end{align*}
where the constants $L=C_{s,\varrho} C_s\,M_t$  and $b=C_{t,\varrho}>1$, with $t=4s$. 
Observe that although  $ b^i $ grows exponentially, the term  $\cE^{\frac{1}{4}\left(\frac{5}{4}\right)^i}_{0,0}$ decays super-exponentially as $i\to\infty$. Hence,  
\[\cU_{i+1,s}=\|\bfh^{(i+1)}\|_{s}\]
still converges rapidly to zero as $i\to\infty$.
This implies the convergence of
the sequence $\mathcal{H}_l$ in the $C^s$ topology and the limit is exactly $\mathcal{H}_\infty$. Therefore,
the limit  $\mathcal{H}_\infty$ is a $C^\infty$ diffeomorphism of $\TT\times\cD_{\frac{\delta}{2}}$ onto its image. 
This finishes the proof.
\end{proof}

Now that Theorem \ref{Thm_simplified} has been proved, by what we have shown in Section \ref{section_renorm} it also implies Theorem \ref{MainR1}. 

\noindent{\bf Acknowledgments.}
 We sincerely thank the anonymous referees for their comments and valuable suggestions on improving our results. Our work was supported by 
Swedish Research Council VR grant 2015-04644, 
 VR grant 2019-04641, and the Wallenberg Foundation grant for international postdocs 2020. 

\appendix

\section{}

For an open set $D\subset\RR$ and $\delta>0$, we denote by 
 $D_\delta=\{y\in\RR:~\textup{dist}(y, D)<\delta\}.$
 Obviously, $D_\delta$ is convex if $D$ is convex.
We have the following elementary fact on the inverse function. 
 \begin{Pro}\label{Apdix_pro1}
 Let $D\subset \RR$ be a  bounded open interval, and
  \[\Phi=\textup{id}+\phi=(x+\phi_1(x,y),y+\phi_2(x,y))\] be a smooth map defined on $\TT\times D_\delta$. 
Denote $\theta_1=\|\phi\|_{1}$. 
Suppose that
 \[\theta_1<\delta\leq \frac{1}{2}.\]
 Then,  $\Phi$ has a smooth inverse map $\Phi^{-1}$ defined on $\TT\times D_{\delta-\theta_1}$, which satisfies
 \[\lrn{\Phi^{-1}-\textup{id}}_{0}\leq  \lrn{\phi}_{0},
  \qquad \lrn{\Phi^{-1}-\textup{id}}_{r}\leq d_r\cdot \lrn{\phi}_{r}\]
 for $r> 0$, where the constant $d_r$  depends on $r$.
 \end{Pro}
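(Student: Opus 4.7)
The plan is to establish invertibility of $\Phi$ via a contraction mapping argument, then derive the $C^0$ bound directly from the defining identity, and finally extract the $C^r$ estimates by differentiating $\Phi^{-1}\circ\Phi=\mathrm{id}$ and invoking Fa\`a di Bruno / composition bounds.

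For invertibility I would lift $\Phi$ to $\mathbb{R}\times D_\delta$ (so $\phi$ is $1$-periodic in $x$), and for each target $(X,Y)\in\mathbb{R}\times D_{\delta-\theta_1}$ define the operator $T(x,y)=(X-\phi_1(x,y),\,Y-\phi_2(x,y))$. Because $\|\phi\|_0\leq\theta_1$ and $Y\in D_{\delta-\theta_1}$, the second coordinate of $T(x,y)$ remains in $D_\delta$, so $T$ preserves $\mathbb{R}\times D_\delta$. Since $\|D\phi\|_0\leq\theta_1<1/2$, the operator $T$ is a contraction in the Euclidean metric, and its unique fixed point defines $\Phi^{-1}(X,Y)$. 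Smoothness follows from the inverse function theorem, since $D\Phi=I+D\phi$ is pointwise invertible with $\|(D\Phi)^{-1}\|_0\leq 2$ via a Neumann series. Projecting back to $\mathbb{T}\times D_{\delta-\theta_1}$ gives the desired inverse.

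The $C^0$ estimate is immediate: setting $z=\Phi^{-1}(w)$, the relation $w=z+\phi(z)$ gives
\[
\Phi^{-1}(w)-w=-\phi(\Phi^{-1}(w)),
\]
and taking the supremum yields $\|\Phi^{-1}-\mathrm{id}\|_0\leq\|\phi\|_0$.

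For the $C^r$ bounds with $r>0$, I would proceed by induction on integer $r$ and then recover fractional $r$ by the Hadamard convexity inequality (Lemma \ref{cor_intpest}). A first differentiation of $\Phi^{-1}\circ\Phi=\mathrm{id}$ gives $D\Phi^{-1}=(I+D\phi)^{-1}\circ\Phi^{-1}$, and the Neumann series already yields $\|D\Phi^{-1}\|_0\leq 2$. For higher integer $k$, the Fa\`a di Bruno formula expresses $D^k\Phi^{-1}$ as a polynomial in $D^j\phi\circ\Phi^{-1}$ (for $1\leq j\leq k$) divided by a power of $\det(I+D\phi)$; applying the composition estimate of Proposition \ref{Apdix_pro2} together with the already-established bounds $\|\Phi^{-1}-\mathrm{id}\|_0\leq\|\phi\|_0$ and $\|D\Phi^{-1}\|_0\leq 2$ yields $\|\Phi^{-1}-\mathrm{id}\|_r\leq d_r\|\phi\|_r$ for integer $r$. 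The non-integer case $r=p+\lambda$ with $\lambda\in(0,1)$ then follows either by interpolation from the neighboring integer estimates or by a direct H\"older estimate applied to the highest derivative $D^p\Phi^{-1}$, using the chain-rule expression above.

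The main obstacle is keeping the constants $d_r$ depending only on $r$ and not on $\|\phi\|_r$. This is guaranteed by the a priori bound $\|\phi\|_1<1/2$: the factor $(I+D\phi)^{-1}$ remains uniformly bounded, and in the Fa\`a di Bruno sum the factors involving $D^j\phi$ with $j<r$ can be controlled by interpolation so that in the final inequality only a single factor of $\|\phi\|_r$ is left linear on the right-hand side, with all the remaining combinatorial weights absorbed into the dimensional constant $d_r$.
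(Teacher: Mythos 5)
Your argument is correct, and it reaches the conclusion by a slightly different route than the paper. For existence of the inverse, the paper proves injectivity on $\TT\times D_\delta$ by the mean value theorem (exactly your observation that $\|D\phi\|_0\leq\theta_1<\tfrac12$ forbids $\Phi(z)=\Phi(z')$ for $z\neq z'$) and then gets surjectivity onto $\TT\times D_{\delta-\theta_1}$ from $\|\Phi-\textup{id}\|_0\leq\theta_1$ by a degree-theory argument; you instead run a Banach fixed-point iteration for the preimage, which delivers existence and uniqueness in one stroke and is arguably more constructive. One small point to tidy in your version: $\RR\times D_\delta$ is not complete, so for a fixed target $(X,Y)$ with $\mathrm{dist}(Y,D)=d_0<\delta-\theta_1$ you should run the contraction on the closed set $\RR\times\{y:\mathrm{dist}(y,D)\leq d_0+\theta_1\}$, which $T$ does preserve; this is routine. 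Your explicit derivation of $\|\Phi^{-1}-\textup{id}\|_0\leq\|\phi\|_0$ from $\Phi^{-1}-\textup{id}=-\phi\circ\Phi^{-1}$ is exactly what is needed (the paper does not spell it out). For the $C^r$ bounds the paper simply cites Hamilton's Lemma 2.3.6, whereas you sketch its proof: induction via Fa\`a di Bruno on $D\Phi^{-1}=(I+D\phi)^{-1}\circ\Phi^{-1}$, Neumann series for the uniform bound on $(I+D\phi)^{-1}$, interpolation to keep the estimate linear in $\|\phi\|_r$, and convexity for non-integer $r$; this is the standard proof of the cited lemma and your remark that the a priori bound $\|\phi\|_1<\tfrac12$ is what keeps $d_r$ independent of $\phi$ is the right key point. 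So the proposal is sound; it buys a self-contained elementary proof at the cost of redoing an estimate the paper outsources.
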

 \begin{proof}
 We first claim that $\Phi$ is injective on $\TT\times D_{\delta}$. 
 Denote $z=(x,y)$. Consider two points $z$ and $z'$ in $\TT\times D_{\delta}$ with $\Phi(z)=\Phi(z')$. Then 
 \[z-z'=\phi(z')-\phi(z).\] Since $D_\delta$ is convex,  the segment $(1-t)z+tz'$, $t\in[0,1]$ is strictly contained in $\TT\times D_{\delta}$. So, using the mean value theorem,
 \[   \|z-z'\|\leq \|D\phi\|\,\|z-z'\|\leq\theta_1 \|z-z'\|< \frac{1}{2}\|z-z'\|.\]
This implies  $z=z'$. Therefore, $\Phi$ is injective on $\TT\times D_{\delta}$.

Since $\|\Phi-\textup{id}\|_0\leq \theta_1$,
by elementary arguments from degree theory the image of $\TT\times D_{\delta}$ under $\Phi$ covers $\TT\times D_{\delta-\theta_1}$. Consequently, $\Phi$ has a smooth inverse on   $\TT\times D_{\delta-\theta_1}$.

 The $C^r$ norm estimate can be achieved by using interpolation estimates (cf. \cite[Lemma 2.3.6]{Hamil_1982}).
 \end{proof}
 
For  two smooth functions, the $C^r$ norm of their composition can be controlled linearly provided that the $C^1$ norm of these two functions are bounded.
  \begin{Pro}\label{Apdix_pro2}
 Let  $\Phi_1 : B^m\to  B^n $  and  $\Phi_2: B^l\to B^m$ be $C^\infty$ functions where $B^\iota\subset \RR^\iota$, $\iota=m, n,l$ are bounded  domains. Assume that the $C^1$ norms $\|\Phi_1\|_1\leq M$ and $\|\Phi_2\|_1\leq M$, then the composition $\Phi_1\circ \Phi_2$ satisfies: for all $r\geq 0$,
 \begin{align*}
 	\|\Phi_1\circ \Phi_2\|_r \leq C_{M,r}\left(1+\|\Phi_1\|_r+\|\Phi_2\|_r\right),
 \end{align*}
where  the constant $C_{M,r}$ depends  on $M$ and $r$.
 \end{Pro}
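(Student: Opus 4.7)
The plan is a standard Moser-style chain-rule estimate combined with the Hadamard convexity inequalities recorded in Lemma~\ref{cor_intpest}. First I would treat integer $r = p \in \NN$; the case $p \leq 1$ is immediate from $\|\Phi_1 \circ \Phi_2\|_0 \leq \|\Phi_1\|_0$ and the chain rule $D(\Phi_1\circ\Phi_2) = (D\Phi_1)(\Phi_2) \cdot D\Phi_2$, so assume $p \geq 2$. By the multivariate Fa\`a di Bruno formula, each component of $D^p(\Phi_1 \circ \Phi_2)(z)$ is a finite sum of terms of the form
\[
T_{k,\vec{j}}(z) = (D^k \Phi_1)(\Phi_2(z)) \cdot D^{j_1}\Phi_2(z) \cdots D^{j_k}\Phi_2(z),
\]
with $1 \leq k \leq p$, each $j_i \geq 1$, and $j_1 + \cdots + j_k = p$.

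For each such term, Lemma~\ref{cor_intpest} together with the hypotheses $\|\Phi_1\|_1, \|\Phi_2\|_1 \leq M$ yields
\[
\|D^{j_i}\Phi_2\|_0 \leq C_{M,p}\,\|\Phi_2\|_p^{(j_i-1)/(p-1)}, \qquad \|D^k\Phi_1\|_0 \leq C_{M,p}\,\|\Phi_1\|_p^{(k-1)/(p-1)}.
\]
Since $\sum_i (j_i-1) + (k-1) = p-1$, the exponents of $\|\Phi_1\|_p$ and $\|\Phi_2\|_p$ in the product bound for $\|T_{k,\vec{j}}\|_0$ sum to exactly $1$, and Young's inequality $a^\theta b^{1-\theta} \leq a + b$ gives $\|T_{k,\vec j}\|_0 \leq C_{M,p}(\|\Phi_1\|_p + \|\Phi_2\|_p + 1)$. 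Summing over the finitely many Fa\`a di Bruno terms settles the integer case. For non-integer $r = p + \lambda$ with $\lambda \in (0,1)$, I would estimate the $\lambda$-H\"older seminorm of each $\partial^J(\Phi_1\circ\Phi_2)$, $|J|=p$, by applying a factorwise Leibniz--H\"older inequality to the same Fa\`a di Bruno expansion. The only non-routine factor is $(D^k\Phi_1)\circ\Phi_2$, for which the trivial composition estimate
\[
|(D^k\Phi_1)(\Phi_2(z)) - (D^k\Phi_1)(\Phi_2(z'))| \leq \|D^k\Phi_1\|_\lambda \,\|\Phi_2\|_1^\lambda\,|z-z'|^\lambda
\]
gives $\|(D^k\Phi_1)\circ\Phi_2\|_\lambda \leq M^\lambda \|\Phi_1\|_{k+\lambda}$; one then interpolates $\|\Phi_1\|_{k+\lambda}$ between $\|\Phi_1\|_1$ and $\|\Phi_1\|_r$, and likewise for the H\"older norms of the $D^{j_i}\Phi_2$-factors, with exponents again summing to $1$.

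The main technical obstacle is the exponent bookkeeping: the sharp identity $\sum_i(j_i-1) + (k-1) = p-1$ is precisely what forces the top-order norms to enter with total power exactly $1$, producing the \emph{linear} dependence on $\|\Phi_1\|_r + \|\Phi_2\|_r$ rather than a polynomial one; without this cancellation the estimate would be useless for the KAM scheme in Section~\ref{section_Mproof}. In the H\"older range one must also take advantage of the convention $|z-z'| \leq 1$ built into the seminorm (see Section~\ref{section_prelim}), so that the H\"older estimate above remains uniform on the bounded domains $B^\iota$. Everything else is routine book-keeping and I would suppress the explicit combinatorial constants into $C_{M,r}$.
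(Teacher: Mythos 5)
Your argument is correct and is essentially the canonical proof: the paper itself does not prove Proposition \ref{Apdix_pro2} but defers to Hamilton's Lemma 2.3.4 and to de la Llave--Obaya, and your combination of the Fa\`a di Bruno expansion, the interpolation (convexity) inequalities, and the exponent identity $\sum_i(j_i-1)+(k-1)=p-1$ followed by Young's inequality is exactly the argument carried out in those references. The one step to handle with slight care --- which you already flag via the convention $|z-z'|\leq 1$ --- is the H\"older seminorm of $(D^k\Phi_1)\circ\Phi_2$ when $|\Phi_2(z)-\Phi_2(z')|>1$, where one falls back on the sup-norm part of $\|D^k\Phi_1\|_\lambda$ at the cost of a harmless factor of $2$.
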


It is proved in \cite[Lemma 2.3.4]{Hamil_1982}. See also \cite{dlL_Obaya_1999} for general domains in Banach spaces.

\section{Higher-dimensional maps}
We remark here that the whole proof of Theorem \ref{Thm_simplified} would go through in higher dimensions $\TT^d\times\RR^d$, $d\geq 2$. However, the intersection property in higher dimensions is not satisfied even by the unperturbed maps (we explain this below).  It is possible that with a property weaker than the intersection property the theorem can be extended to higher dimensions. 

Recall that any exact symplectic map of $\TT\times\RR$ satisfies the intersection property.  
In this section, we  show that in the case of higher-dimensional maps of $\TT^d\times\RR^d$ ($d\geq 2$), there are even exact symplectic maps that do not satisfy the intersection property.

For simplicity, here we only consider the maps of $\TT^2\times\RR^2$.
Recall that a map $F(x,y): \TT^2\times\RR^2\to \TT^2\times\RR^2$ is said to satisfy the \emph{intersection property} if each $2$-dimensional torus  close to the ``horizontal'' torus $\{y=const\}$ intersects its image under $F$.

\begin{ex}
	Let $F_0(x,y)=(x+y, y)$, where $x=(x_1,x_2)\in\TT^2$ and $y=(y_1, y_2)\in\RR^2$.
	Obviously, $F_0$ is exact symplectic. However, we claim that $F_0$ does not satisfy the intersection property
\end{ex}

Assume by contradiction that $F_0$ satisfies the intersection property, then  for each 2-dimensional torus of the form $y=\psi(x)$ where the function
$\psi:\TT^2\to\RR^2$ is close to a constant vector $y_0\in\RR^2$, it satisfies 
\begin{equation*}
	\Big\{(x, \psi(x) ): x\in\TT^2\Big\}\bigcap \Big\{(x+\psi(x), \psi(x) ): x\in\TT^2\Big\}\neq\emptyset.
\end{equation*}
as a result of $F_0(x,y)=(x+y,y)$. 
 This is equivalent to saying the following equation
 \begin{equation}\label{dqqeeqe}
 	 \psi(x+\psi(x))=\psi(x) \quad \textup{has at least one solution~}x\in\TT^2.
 \end{equation}

In particular, we consider  the 2-dimensional torus $\cG=\{(x,\psi(x)) : x\in \TT^2\}$ where $\psi=(\psi_1,\psi_2):\TT^2=\RR^2/\ZZ^2\to\RR^2$ is given by \[\psi_1(x_1,x_2)=\frac{1}{2}+\delta\sin 2\pi x_1,\quad\psi_2(x_1,x_2)=\delta \cos 2\pi x_1,\]
 and $\delta\in(0,\frac{1}{2\pi})$ is sufficiently small. Note that  $\cG$ is close to the 
 ``horizontal''    torus $\TT^2\times\{y=(\frac{1}{2}, 0)\}$.
Then,   \eqref{dqqeeqe} implies that  the following system of equations  admits solutions,
\begin{align*}
\left\{
\begin{array}{rrr}
	\frac{1}{2}+\delta\sin2\pi(x_1+\frac{1}{2}+\delta\sin 2\pi x_1)&=&\frac{1}{2}+\delta\sin 2\pi x_1\\
	 	\delta\cos2\pi(x_1+\frac{1}{2}+\delta\sin2\pi x_1)&=&\delta\cos 2\pi x_1
\end{array}
\right.
\end{align*}
As $\delta>0$, this implies that the following two functions $g(x_1)$ and $h(x_1)$  have common zeros,
\begin{align*}
	g(x_1):=&\sin2\pi(x_1+\frac{1}{2}+\delta\sin 2\pi x_1)-\sin 2\pi x_1\\ h(x_1):=&\cos2\pi(x_1+\frac{1}{2}+\delta\sin2\pi x_1)-\cos2\pi x_1
\end{align*}
Observe that  $g(x_1)$  has only two zeros (mod $1$).
Indeed, as $\delta>0$ is sufficiently  small, it is easy to check that for all $x_1\in(0,\frac{1}{2})$,  we get $x_1+\frac{1}{2}+\delta\sin 2\pi x_1\in (\frac{1}{2},1)$, which implies 
\[\sin2\pi(x_1+\frac{1}{2}+\delta\sin 2\pi x_1)<0,\qquad \sin 2\pi x_1>0\qquad \text{for all~} x_1\in(0,\frac{1}{2}),\]
and hence $g(x_1)<0$ for $x_1\in(0,\frac{1}{2})$. Moreover, since $g(x_1)=-g(1-x_1)$, we find that $g(x_1)>0$ for all $x_1\in(\frac{1}{2},1)$. Thus, $g(x_1)$ has only two zeros $x_1=0$ (mod 1) and $x_1=\frac{1}{2}$ (mod 1).
But, $h(0)=-2$ and $h(\frac{1}{2})=2$. This is a contradiction.

In conclusion,   $F_0(\cG)\cap \cG=\emptyset$, and thus $F_0$ has no intersection property. As a consequence, we have the following result.
\begin{Cor}
	For every (exact symplectic) map $F:\TT^2\times\RR^2\to \TT^2\times\RR^2$ that is sufficiently close to $F_0(x,y)=(x+y,y)$ in the $C^1$ topology, $F$   does not satisfy the intersection property.
\end{Cor}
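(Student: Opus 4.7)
The plan is to exploit the specific obstruction already constructed in the preceding example and show that it is stable under $C^1$-small perturbations. Fix once and for all the $2$-dimensional torus
\[
\cG=\bigl\{(x,\psi(x)):x\in\TT^2\bigr\},\qquad \psi_1(x_1,x_2)=\tfrac12+\delta\sin 2\pi x_1,\quad \psi_2(x_1,x_2)=\delta\cos 2\pi x_1,
\]
with $\delta\in(0,\tfrac{1}{2\pi})$ chosen small as in the example. Since $\psi$ is $C^1$-close to the constant map $(\tfrac12,0)$, the torus $\cG$ is $C^1$-close to the horizontal torus $\TT^2\times\{(\tfrac12,0)\}$, so it is an admissible test torus in the definition of the intersection property.

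The example shows that $F_0(\cG)\cap\cG=\emptyset$. Because $\cG$ is compact and $F_0$ is continuous, both $\cG$ and $F_0(\cG)$ are disjoint compact subsets of $\TT^2\times\RR^2$, hence there exists
\[
\eta:=\mathrm{dist}\bigl(F_0(\cG),\cG\bigr)>0.
\]
This strict separation is the mechanism that will survive a small perturbation.

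Now let $F$ be any (exact symplectic) diffeomorphism of $\TT^2\times\RR^2$ with $\|F-F_0\|_{C^0(\cG)}<\eta/2$; in particular this is ensured if $F$ is sufficiently close to $F_0$ in the $C^1$ topology on a neighbourhood of $\cG$. Then for every $x\in\TT^2$,
\[
\mathrm{dist}\bigl(F(x,\psi(x)),\cG\bigr)\geq \mathrm{dist}\bigl(F_0(x,\psi(x)),\cG\bigr)-\|F(x,\psi(x))-F_0(x,\psi(x))\|\geq \eta-\eta/2=\eta/2>0.
\]
Consequently $F(\cG)\cap\cG=\emptyset$. Since $\cG$ is $C^1$-close to a horizontal torus, this single test torus already violates the definition of the intersection property, so $F$ fails it.

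There is no real obstacle here beyond the continuity argument above; the work was already done in constructing the torus $\cG$ for the unperturbed map $F_0$. The only mild point to be careful about is recording that the $C^1$ proximity of $F$ to $F_0$ in a neighbourhood of the compact set $\cG$ is enough to guarantee the $C^0$ proximity used in the separation estimate, which is immediate.
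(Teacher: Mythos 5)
Your proof is correct and follows essentially the same route as the paper: both take the torus $\cG$ from the preceding example with $F_0(\cG)\cap\cG=\emptyset$ and conclude by a compactness/continuity stability argument that a $C^0$-small (hence $C^1$-small) perturbation $F$ still satisfies $F(\cG)\cap\cG=\emptyset$, so the single test torus $\cG$ already defeats the intersection property. Your version spells out the separation constant $\eta$ explicitly and keeps the torus fixed rather than also perturbing it, which is a minor streamlining of the paper's phrasing, not a different argument.
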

\begin{proof}
	By what we have shown above, there exists a 2-dimensional torus $\cG$ such that $F_0(\cG)\cap \cG=\emptyset$. Then, for every $F$ sufficiently close to $F_0$, and every torus $\cG'$ sufficiently close to $\cG$ in the $C^1$ topology, we have $F(\cG')\cap \cG'=\emptyset$.	 
\end{proof}

\newcommand{\etalchar}[1]{$^{#1}$}

\end{document}